\theoremstyle{plain}
\newtheorem{theorem}{Theorem}
\newtheorem{lemma}{Lemma}
\newtheorem{definition}{Definition}
\newtheorem{proposition}{Proposition}
\newtheorem{example}{Example}
\DeclareMathOperator{\grad}{grad}  
\DeclareMathOperator{\gph}{gph}  
\DeclareMathOperator{\reg}{reg}  
\DeclareMathOperator{\tr}{tr}  
\DeclareMathOperator{\inte}{int}  
\DeclareMathOperator{\dom}{dom} 
\DeclareMathOperator{\lip}{lip} 
\DeclareMathOperator{\id}{id}
\DeclareMathOperator{\dt}{dt}  
\DeclareMathOperator{\de}{d}  
\DeclareMathOperator{\B}{B}
\begin{document}


\title{\textbf{Inexact Newton Methods for Solving Generalized Equations on Riemannian Manifolds}}

\author{Mauricio S. Louzeiro\thanks{School of Computer Science and Technology, Dongguan University of Technology, Dongguan, Guangdong, China and  IME/UFG, Avenida Esperan\c{c}a, s/n, Campus Samambaia, CEP 74690-900, Goi\^ania, GO, Brazil  (Email: {\tt mauriciolouzeiro@ufg.br}). This author was partially supported by National Natural Science Foundation of China (No. 12171087).}
\and
Gilson N. Silva\thanks{Departamento de Matem\'atica, Universidade Federal do Piau\'i­, Teresina, Piau\'i­, 64049-550, Brazil (Email: {\tt
      gilson.silva@ufpi.edu.br}). This author was partially supported  by
     CNPq, Brazil (401864/2022-7 and 306593/2022-0).}
 \and
 Jinyun Yuan\thanks{School of Computer Science and Technology, Dongguan University of Technology, Dongguan, Guangdong, China (Email: {\tt yuanjy@gmail.com}). This author was partially supported by National Natural Science Foundation of China (No. 12171087), Dongguan University of Technology, China (221110093
 ), and Shanghai Municipal Science and Technology Commission, China (23WZ2501400).}
 \and 
Daoping Zhang \thanks{School of Mathematical Sciences and LPMC, Nankai University, Tianjin 300071, China (Email: {\tt daopingzhang@nankai.edu.cn}). This author was supported by National Natural Science Foundation of China (No. 12201320) and the Fundamental Research Funds for the Central Universities, Nankai University (No. 63221039 and 63231144).} 
}

\maketitle

\begin{abstract}
{The convergence of inexact Newton methods is studied for solving generalized equations on Riemannian manifolds by using the metric regularity property, which is also explored. Under appropriate conditions and without any additional geometric assumptions, local convergence results with linear and quadratic rates, as well as a semi-local convergence result, are obtained for the proposed method. Finally, the theory is applied to the problem of finding a singularity for the sum of two vector fields. In particular, the KKT system for the constrained Riemannian center of mass on the sphere is explored numerically.\\}

\noindent{\bf Key words:}  Generalized equation, inexact Newton method, metric regularity, Riemannian manifolds, vector fields.\\
\noindent{\bf AMS subject classification:} \,90C33\,$\cdot$\,49M37\,$\cdot$\,65K05

\end{abstract}

\maketitle
\section{Introduction}\label{sec:int}


In recent years, constrained generalized equations on Banach spaces, i.e., finding a solution to the inclusion
\begin{equation}\label{eq:CGEBS}
x\in C, \quad f(x) + F(x) \ni 0
\end{equation}
where $\mathbb{X}$ and $\mathbb{Y}$ are two Banach spaces, $C\subset \mathbb{X}$ is a nonempty, closed, and convex set, $f\colon \mathbb{X} \to \mathbb{Y}$ is a mapping, and $F \colon \mathbb{X} \rightrightarrows \mathbb{Y}$ is a set-valued mapping, have gained increased attention \cite{Oliveira2019, andreani2022}. This is attributed to the fact that the model \eqref{eq:CGEBS} covers many well-known problems, such as constrained variational inequality and split variational inequality problem \cite{Censor2011, He2015}, nonlinear equations, systems of equations and inequalities, optimality condition in mathematical programming and optimal control, and equilibrium problem. The readers are referred to \cite{Adly2015, Adly2022, Adly2018, izmailov2010inexact, Dontchev1996, CibulkaDontchev2015, FerreiraSilva2017, ferreira2017metrically, FerreiraSilva2018, DontchevAragon2011, DontchevRockafellar2014, DontchevRockafellar2010, DontchevRockafellar2013, DontchevAragon2014, GEOFFROY2004, GayduSilva2020, KlatteKummer} for a detailed study of \eqref{eq:CGEBS} with $C= \mathbb{X}$.

For a Riemannian manifold ${\cal M}$, a closed set $\Omega \subset {\cal M}$ with a nonempty interior, a continuously differentiable mapping $f \colon {\cal M} \to \mathbb{R}^m$, and a set-valued mapping $F \colon {\cal M} \rightrightarrows \mathbb{R}^m$, we consider the generalized equation
\begin{equation}\label{eq:mainproblem}
p \in \Omega, \quad f(p) + F(p) \ni 0.
\end{equation}
Evidently, \eqref{eq:mainproblem} covers the nonlinear equation $f(p) = 0$ ($F \equiv 0)$ and the nonlinear inclusion problem $f(p)\in K$ ($F \equiv -K$) for a fixed cone $K\subset\mathbb{R}^m$
  \cite{ChongLi2009}. In this paper, we shall prove that problem \eqref{eq:mainproblem} also covers the problem of finding a singularity of the sum of two vector fields of the form
\begin{equation}\label{eq:VF}
p \in \Omega, \quad V(p) + Z(p) \ni 0_p,
\end{equation}
where $V\colon {\cal M}\to {\cal T}{\cal M}$ is a single-valued vector field, 
$Z\colon {\cal M} \rightrightarrows {\cal T}{\cal M}$ is a set-valued vector field. In particular, we demonstrate that \eqref{eq:mainproblem} can also be used to obtain a solution for a variational inequality problem and the Karush–Kuhn–Tucker (KKT) conditions for a constrained optimization problem on manifolds.


Problem \eqref{eq:VF} has been extensively investigated \cite{Adly2022, ferreira2017metrically} and solved using Newton's method for \( Z \equiv 0 \) \cite{alvarez2008unifying, fernandes2017superlinear, Udriste1994, li2005convergence, adler2002newton, dedieu2003newton}. This problem naturally arises, for example, in the first-order optimality conditions of the minimization problem
\begin{equation} \label{pro:sf+g}
\text{Minimize } \varsigma (p) + \vartheta (p), \quad p \in \inte \Omega,
\end{equation}
where \(\varsigma \colon {\cal M} \to \overline{\mathbb{R}} \coloneqq \mathbb{R} \cup \{\pm\infty\}\) is a differentiable function defined over \(\inte \Omega\) (the interior of \(\Omega\)) and \(\vartheta \colon {\cal M} \to \overline{\mathbb{R}}\) is non-differentiable. In fact, one can consider \eqref{eq:VF} with \(V\) representing the Riemannian gradient of \(\varsigma\) (\(\grad \varsigma\)), \(Z\) representing the Riemannian subdifferential of \(\vartheta\) (\(\partial \vartheta\)), and \(\Omega = \inte \Omega\), i.e.,
\[
p \in \inte \Omega, \quad \grad \varsigma(p) + \partial \vartheta(p) \ni 0_p.
\]
Problem \eqref{pro:sf+g} is associated with several important applications, including sparse principal component analysis \cite{genicot2015weakly}, sparse blind deconvolution \cite{zhang2017global}, unsupervised feature selection \cite{tang2012unsupervised}, and image restoration \cite{bergmann2016parallel, bergmann2021fenchel}.


The extensive scope of generalized equations and the growing interest in optimization on manifolds \cite{sato2021riemannian, boumal2023intromanifolds, absil2009optimization} in recent years have motivated us to explore the integration of these two areas in this paper. Our focus is on studying a Riemannian version of the inexact Newton method proposed by \cite{Dontchev2015} with the aim of solving problem \eqref{eq:mainproblem}. Given an initial point \( p_0 \in \mathcal{M} \), our method generates a sequence of iterations \( \{p_k\} \) as follows:
\begin{equation}\label{eq:INMtp}
\left( f(p_k) + \mathcal{D}f(p_k)[v_k] + F(\exp_{p_k} v_k) \right) \cap R_k(p_k) \neq \varnothing, \qquad p_{k+1} \coloneqq \exp_{p_k} v_k,
\end{equation}
where \( R_k \colon \mathcal{M} \rightrightarrows \mathbb{R}^m \) is a sequence of set-valued mappings with closed graphs representing the inexactness, and \( \mathcal{D}f \) denotes the differential of \( f \). In other words, the method involves: selecting an initial point \( p_0 \) on the manifold (sufficiently close to a solution), solving a subproblem to find \( v_k \) in the intersection above (under certain conditions, it is possible to guarantee that it is nonempty), and computing the next iteration \( p_{k+1} \) by applying the exponential map to \( (p_k, v_k) \). It is noteworthy that whenever \( v_k \) is sufficiently small, \eqref{eq:INMtp} can be expressed as:
\begin{equation}\label{eq:INM}
\left( f(p_k) + \mathcal{D}f(p_k)[ \exp^{-1}_{p_k}p_{k+1} ] + F( p_{k+1} ) \right) \cap R_k(p_k) \neq \varnothing.
\end{equation}
Although it may appear that obtaining \( v_{k} \) requires solving the subproblem in \eqref{eq:INMtp} exactly, the map \( R_k \) is specifically introduced to circumvent this necessity. This is evident in the particular case of \eqref{eq:INMtp} where \( R_k(p_k) \) is defined as the closed ball centered at \( 0 \) with a radius of \( \eta_k\|f(p_k)\|_{e} \), with \( \eta_k \) being a forcing sequence of non-negative real numbers converging to 0, and \( F \equiv 0 \). In this scenario, the subproblem in \eqref{eq:INMtp} consists of finding a \( v_k \) that satisfies
\[
\|f(p_k) + {\cal D}f(p_k)[v_k]\|_{e} \leq \eta_k\|f(p_k)\|_{e}, \qquad k = 0, 1, \ldots,
\]
where \(\|\cdot\|_e\) denotes the Euclidean norm, which can be interpreted as an inexact Newton method for by solving \( f(p) = 0 \). The convergence analysis presented in this paper is conducted using \eqref{eq:INMtp} because it is more general and can be applied to other potential particular cases of \eqref{eq:mainproblem}.

Assuming that the set-valued mapping \( f+F \) in \eqref{eq:mainproblem} is metrically regular at \(\bar{p}\in \Omega\) for \(0\) and that \( R_k \) satisfies a suitable boundedness condition, we demonstrate that a sequence \(\{p_k\}\) generated by \eqref{eq:INM} exhibits both linear and quadratic convergence towards \(\bar{p}\), with the exact nature of the convergence depending on the specific assumptions regarding \( R_k \). These results can be obtained without requiring prior knowledge of the sequence of mappings \( R_k \) in terms of problem-specific data. However, it is necessary to ensure that a sequence \( \{u_k\} \) in \( R_k(p_k) \) converges to \(0\) at the same rate as the sequence \(\{p_k\}\) converges to \(\bar{p}\). This requirement is a standard assumption in the context of inexact Newton-type methods, even when applied to nonlinear equations.

Under the condition of metric regularity of a linearization of \( f+F \) at \(\bar{p}\) for \(0\), and provided that certain additional conditions are met, we present variations of the aforementioned results. In these variations, a neighborhood of \(\bar{p}\) is assumed to be known, which allows for a more suitable choice of the initial point \( p_0 \) for the sequence \(\{p_k\}\). We also provide a semi-local convergence result, where the required conditions are related to \( p_0 \) rather than \(\bar{p}\). This result is new even in the case where \({\cal M}\) is a Euclidean space.

To \textcolor{red}{} understand the concept of metric regularity well on Riemannian manifolds, we constructed examples of mappings that are metrically regular over the set of positive definite symmetric matrices equipped with a well-established Riemannian metric. Additionally, we present conditions that guarantee the metric regularity property for certain mappings, and in particular, we compare the metric regularity of \( f+F \) with that of its linearization. Finally, some examples are given to show that our proposed concept is useful and  a numerical example applied to the KKT system is given as well for the constrained Riemannian center of mass on the sphere to illustrate our theoretical results.

This work is organized as follows. In Section~\ref{sec:int}, some notations and basic concepts are reviewed. In Section~\ref{sec:MR} the metric regularity assumption is explored. In Section~\ref{sec:conv}, local and semi-local convergence are studied for the proposed method. In Section~\ref{sec:rpscandeg} the relationship is investigated between \eqref{eq:mainproblem} and \eqref{eq:VF}, and some examples of classical  problems which can be viewed as generalized equations are given. In Section~\ref{sec:numerical_example}, a numerical example is provided to illustrate our theoretical results. Finally, conclusions are presented in the last section.


\section{Preliminary} \label{sec:int}

In this section we recall some notations, definitions and basic properties of Riemannian manifolds used throughout the paper, which can be found in many introductory books on Riemannian geometry, for example \cite{DoCa92, lee2006riemannian,Lee:2003:1, Tu:2011:1}.


Suppose that ${\cal M}$ is a connected, $n$-dimensional smooth manifold. At each point $p\in {\cal M}$, the tangent space ${\cal T}_p{\cal M}$ is an $n$-dimensional vector space with its origin at $0_p$. The disjoint union of all tangent spaces, denoted as ${\cal T}{\cal M}$, is the {\it tangent bundle} of ${\cal M}$. We assume that ${\cal M}$ is equipped with a Riemannian metric, making it a {\it Riemannian manifold}. At each point $p\in {\cal M}$, this Riemannian metric is denoted by $ {\langle} \cdot , \cdot {\rangle}_p\colon {\cal T}_p{\cal M} \times {\cal T}_p{\cal M} \to \mathbb{R}$, and the associated norm in ${\cal T}_p{\cal M}$ is represented as $\| \cdot \|_p$.
The Riemannian distance between two points $p$ and $q$ in ${\cal M}$, denoted as $d(p,q)$, is defined as the infimum of the lengths of all piecewise smooth curve segments connecting $p$ to $q$. Additionally, the distance from a point $p$ to a subset ${\cal W} \subset {\cal M}$  is defined as $d( p , {\cal W} )\coloneqq \inf_{q\in {\cal W} }d(p,q)$ and the interior of ${\cal W}$ is represented by  $\inte {\cal W}$.

A {\it vector field} $V$ on ${\cal M}$ is a correspondence that associates to each point $p\in {\cal M}$ a vector $V(p)\in {\cal T}_p {\cal M}$. The point $p$ is said to be a singularity of $V$ if and only if $V(p)=0_p$. The set of smooth vector fields on ${\cal W} \subseteq {\cal M}$ is denoted by ${\cal X}({\cal W})$.

The tangent vector of a smooth curve $\gamma\colon I \to {\cal M}$ defined on some open interval $I \subseteq \mathbb{R}$ is denoted by $\dot{\gamma}(t)$. For each $a, t \in I$, $a < t$, the Levi-Civita connection $\nabla\colon {\cal X}({\cal M}) \times {\cal X}({\cal M}) \to {\cal X}({\cal M})$ induces an isometry $P_{\gamma, a, t}\colon {\cal T}_{\gamma(a)} {\mathcal{M}} \to {\cal T}_{\gamma(t)} {\mathcal{M}}$ relative to the Riemannian metric on ${\cal M}$, given by $P_{\gamma, a, t}, v = V(\gamma(t))$, where $V$ is the unique vector field on $\gamma$ such that $\nabla_{\dot{\gamma}(t)}V(\gamma(t)) = 0$ and $V(\gamma(a)) = v$. The isometry $P_{\gamma, a, t}$ is the {\it parallel transport} along $\gamma$ joining $\gamma(a)$ to $\gamma(t)$. When the geodesic $\gamma$ connecting $p = \gamma(a)$ and $q = \gamma(t)$ is unique, the notation $P_{pq}$ will be used instead of $P_{\gamma, a, t}$. It is well-known that $P_{qp} \circ P_{pq}$ is equal to the identity map over ${\cal T}_p{\cal M}$.

The {\it differential} of a smooth function $f\colon {\cal M} \to \mathbb{R}$ at $p$ is the linear map ${\cal D}f(p)\colon  {\cal T}_p{\cal M} \to \mathbb{R}$ which assigns to each $v\in {\cal T}_p{\cal M}$ the value
$$
{\cal D}f(p)[v] = \dot{\gamma}(t_0)[f] = \frac{\de}{\dt}(f\circ \gamma)\Bigl|_{t=t_0},
$$
for every smooth curve $\gamma\colon I  \to {\cal M}$ satisfying $\gamma(t_0)=p$ and $\dot{\gamma}(t_0)=v$. The {\it gradient} at $p$ of $f$, denoted as $\grad f(p)$, is defined by the unique tangent vector at $p$ such that 
$
{\langle}  \grad f(p)  ,   v  {\rangle}_p = {\cal D}f(p)[v]$ for all $v\in {\cal T}_p{\cal M}.
$
For a smooth multifunction $f\coloneqq (f_1,\ldots,f_m) \colon {\cal M} \to \mathbb{R}^m$, its differential ${\cal D}f(p)\colon  {\cal T}_p{\cal M} \to \mathbb{R}^m$ is given by 
\begin{equation}\label{eq:diff.grad. multi}
{\cal D}f(p)[v]= ({\langle}  \grad f_1(p)  ,   v  {\rangle}_p, \ldots, {\langle}  \grad f_m(p)  ,   v  {\rangle}_p), \qquad  v\in {\cal T}_p{\cal M}.
\end{equation}
Note that ${\cal D}f(p)$ is a linear map from ${\cal T}_p{\cal M}$ into $\mathbb{R}^m$ for all $p\in {\cal M}$. The norm of a linear map $ A \colon  {\cal T}_p{\cal M} \to \mathbb{R}^m$ is defined  by $\|A\|_{map} \coloneqq \sup\{  \| A v \|_{e}\colon  v\in {\cal T}_p{\cal M}, \, \|v\|_p=1  \}$ where $\|\cdot\|_e$ is the Euclidean norm on $\mathbb{R}^m$. 

A vector field $V$ along a smooth curve $\gamma$ is said to be {\it parallel} if and only if $\nabla_{ \dot{\gamma} } V=0$. The curve $\gamma$ is a {\it geodesic} when $ \dot{\gamma}$ is self-parallel. When the geodesic equation $\nabla_{\dot{\gamma}} \dot{\gamma}=0$ is a second-order nonlinear ordinary differential equation, the geodesic $\gamma=\gamma_{v}( \cdot ,p)$ is determined by its position $p$ and velocity $v$ at $p$. The restriction of a geodesic to a closed bounded interval is called a {\it geodesic segment}. Denote the unique geodesic segment $\gamma \colon [0,1] \to {\cal M}$ satisfying $\gamma(0)=p$ and $\gamma(1)=q$ by $\gamma_{pq}$. A geodesic segment joining $p$ to $q$ in $\mathcal{M}$ is said to be {\it minimal} if its length is equal to $d(p,q)$. A Riemannian manifold is {\it complete} if its geodesics are defined for all values of $t\in \mathbb{R}$. Hopf-Rinow's theorem asserts that every pair of points in a complete, connected Riemannian manifold $\mathcal{M}$ can be joined by a (not necessarily unique) minimal geodesic segment. Due to the completeness of $\mathcal{M}$, the {\it exponential map} $\exp_{p}\colon {\cal T}_{p} \mathcal{M} \to \mathcal{M} $ is given by $\exp_{p}v = \gamma _{v}(1,p)$, for each $p\in \mathcal{M}$. {\it In this paper, all manifolds are assumed to be connected, finite-dimensional, and complete.}

The open and closed balls on ${\cal M}$ of radius $r>0$ centered at $p \in {\cal M}$ are defined, respectively,
as 
$
{\cal B}_{r}(p)\coloneqq \left\{q\in {\cal M} \colon d(p,q) < r \right\}
$ and $
{\cal B}_{r}[p]\coloneqq \left\{q\in {\cal M} \colon d(p,q) \leq r \right\}
$. Analogously, the open and closed balls on ${\cal T}_p{\cal M}$ of radius $r > 0$ centered at $u\in {\cal T}_p{\cal M}$ are  defined, respectively, as $B_{r}(u)\coloneqq \left\{v\in {\cal T}_p{\cal M} \colon \| v - u\|_p < r \right\}$ and $B_{r}[u]\coloneqq \left\{v\in {\cal T}_p{\cal M} \colon \| v - u\|_p \leq r \right\}$.  It is well-known that there exists $r>0$ such that $\exp_p\colon B_r(0_p) \to {\cal B}_r(p)$ is a diffeomorphism, and ${\cal B}_r(p)$ is called a {\it normal ball} 
with center $p$ and radius $r$. Whenever ${\cal B}_r(p)$ is a normal ball, it and its closure will be denoted by $\B_r(p)$ and $\B_r[p]$, respectively. The {\it injectivity radius of $\cal M$ at $p$}, denoted by $r_{inj}(p)$,  is the supremum of all $r>0$ such that ${\cal B}_r(p)$ is a normal ball. The equality
 \begin{equation}\label{eq:dist.iq.nor.expi}
	d(q,p)=\|\exp_{p}^{-1}q\|_p
\end{equation}
holds for all $q\in {\cal B}_r(p)$, where $\exp_{p}^{-1}\colon {\cal B}_r(p)   \to {\cal T}_p{\cal M}$ denotes the inverse of the exponential map.
Recall that there exist $r>0$ and $\delta>0$ such that, for every $q \in \B_{r}(p)$, ${\cal B}_{\delta}(q)$ is a normal ball and $\B_{r}(p) \subset {\cal B}_{\delta}(q)$, see \cite[Theorem 3.7]{DoCa92}. In this case, $\B_{r}(p)$ is called a {\it totally normal ball} of center $p$ and radius $r$. When \(\mathcal{M}\) is a {\it Hadamard manifold}, that is, a complete, simply connected Riemannian manifold with nonpositive sectional curvature, \(\B_r(p)\) is totally normal for all \(r > 0\) and \(p \in \mathcal{M}\).

A sequence $\{p_k\}\subset {\cal M}$ is linearly convergent to $\bar{p}$ when there exist $\theta \in (0,1)$ and $k_0\in \mathbb{N}$ such that
$$
d(p_{k+1}, \bar{p}) \leq \theta d(p_{k}, \bar{p}), \quad \mbox{for all } k>k_0.
$$
It is said to be quadratically convergent to $\bar{p}$ when there exist $\theta > 0$ and $k_0\in \mathbb{N}$ such that
$$
d(p_{k+1}, \bar{p}) \leq \theta d^2(p_{k}, \bar{p}), \quad \mbox{for all } k>k_0.
$$
We end this section by presenting the concept of Lipschitz continuity for the differential of continuously differentiable functions in the Riemannian context. This concept will be fundamental for obtaining the quadratic convergence rate for our algorithm.
\begin{definition} \label{Def:DLips}
	Let $f\colon {\cal M}  \to \mathbb{R}^m$ be continuously differentiable at $\bar{p}$. Then ${\cal D} f$ is
	L-Lipschitz continuous around $\bar{p}$ if there exists $\delta_L>0$ such that for every $p,q \in {\cal B}_{\delta_L}(\bar p)$ 
	\begin{equation}\label{eq:Df-lipschitz}
		\|	{\cal D}f(q) P_{\gamma,0,1}   -  {\cal D}f(p) \|_{map} \leq L \| \dot{\gamma}(0) \|_{p},
	\end{equation}
	where $\gamma \colon [0,1] \to {\cal M}$ is a geodesic connecting $p=\gamma(0)$ to $q=\gamma(1)$.
\end{definition}


\section{Metric Regularity on Riemannian Manifolds}\label{sec:MR}

Let  $F \colon {\cal M} \rightrightarrows \mathbb{R}^m$ be a set-valued mapping, whose the graph of $F$ is the set
$\gph F \coloneqq  \{(p,x)\in {\cal M}  \times \mathbb{R}^m \colon x\in F(p) \}$ with domain $\dom F \coloneqq  \{p\in {\cal M}  \colon F(p) \neq \emptyset \}$. The inverse of $F$ is defined as $x \mapsto F^{-1}(x)=\{p\in {\cal M} \colon x\in F(p)\}$.

A function $f \colon {\cal M}  \to \mathbb{R}^m$ is said to be {\it Lipschitz continuous} relative to a set  ${\cal W} \subset  \dom f$ if there exists a constant $\kappa \geq 0$ such that 
\begin{equation*}
	|f(p') - f(p)| \leq \kappa d(p',p) \mbox{  for all } p',p\in {\cal W}. 
\end{equation*}
Let $\bar{p}\in\inte \dom f$. The {\it Lipschitz modulus} of $f$ at $\bar{p}$ is defined by
\begin{equation*}\label{eq:def.lip}
	\lip(f;\bar{p}) \coloneqq \underset{\substack{ p',p  \to \bar{p}\\p\neq p'}}{\limsup }  \frac{| f(p') - f(p) |}{d(p',p)}.
	\end{equation*}
	We now introduce the following notations on $\mathbb{R}^m$: $\mathbb{B}_{r}[x]$ denotes the Euclidean closed ball of radius $r>0$ and center $x$, and $d_e$ denotes the Euclidean distance from a point to a set. The next concept plays an important role in role in this paper and it comes from \cite[p. 279]{DontchevRockafellar2013} in the metric spaces setting:
	\begin{definition}\label{de:majcon}
	A set-valued mapping $\Phi\colon {\cal M}  \rightrightarrows \mathbb{R}^m$ is said to be metrically regular  at $\bar p\in {\cal M} $ for $\bar x \in \mathbb{R}^m$ when $\bar x\in \Phi(\bar p)$, and there exist positive constants $\sigma$, $a$ and $b$ such that
	\begin{equation}\label{eq:loc.clo.me.re.def}
		\mbox{the set} \,\, \gph \Phi \, \cap ( {\cal B}_{a}[\bar p] \times \mathbb{B}_{b}[\bar x] ) \,\, \mbox{is closed}
	\end{equation}
and
	\begin{equation} \label{def:MR}
		d(p, \Phi^{-1}(x))\leq \sigma d_e(x, \Phi(p))  \,\,\mbox{ for all } \,\,\, (p,x)\in {\cal B}_{a}[\bar p] \times \mathbb{B}_{b}[\bar x].
	\end{equation}
	 The infimum of $\sigma$ over all such combinations of $\sigma$, $a$ and $b$ is called the regularity modulus for $\Phi$ at $\bar p$ for $\bar x$ and denoted by $\reg(\Phi; \bar p| \bar x)$. The absence of metric regularity is signaled by $\reg(\Phi; \bar p| \bar x)=\infty$. 
\end{definition}

It is important to emphasize that one of the main assumptions used to guarantee the convergence of the proposed algorithm in this paper is metric regularity. This concept, in the context of Riemannian manifolds, has been explored in recent works such as \cite{Adly2018,adler2002newton,ferreira2017metrically}. We will now briefly discuss this property in the case where \(\mathcal{M} \equiv \mathbb{R}^n\). If \(f \colon \mathbb{R}^n \to \mathbb{R}^n\) is a differentiable function, the metric regularity of \(f\) at \(\bar{p}\) for \(0\) is equivalent to stating that the inverse of the Jacobian matrix of \(f\) at \(\bar{p}\) exists, which is the standard regularity assumption applied to solve the nonlinear equation \(f(p) = 0\). On the other hand, if we are interested in solving a nonlinear system of equations and inequalities, for example,
\begin{equation}\label{eq:feasiblepoint:1}
g(p) \leq x, \quad h(p) = y,
\end{equation}
where \(x \in \mathbb{R}^m\) and \(y \in \mathbb{R}^s\) are given parameters, and \(g: \mathbb{R}^n \to \mathbb{R}^m\) and \(h: \mathbb{R}^n \to \mathbb{R}^s\) are continuously differentiable functions, then we can associate problem \eqref{eq:feasiblepoint:1} with the following generalized equation:
$$
\xi \in f(p) + F(p), \quad  
\xi = \begin{bmatrix}
x \\
y
\end{bmatrix}, \quad 
f = \begin{bmatrix}
g \\
h 
\end{bmatrix},
\quad 
F = \begin{bmatrix}
\mathbb{R}_+^m \\
0
\end{bmatrix}.
$$
In this particular case, it is well known that the metric regularity of \( f + F\) at \(\bar{p}\) for \(0\) is equivalent to the standard Mangasarian-Fromovitz constraint qualification at \(\bar{p}\). Notably, the case where \(F \equiv N_C\) is of particular interest, with \(N_C\) denoting the normal cone to a closed convex set \(C \subset \mathbb{R}^m\). The metric regularity of \( f + N_C\) at \(\bar{p}\) for \(0\) is equivalent to the concept of \textit{strong metric regularity}, meaning that \(f + N_C\) is locally single-valued and Lipschitz continuous at \(\bar{p}\) for \(0\). Thus, if \(f\) and \(C\) are chosen such that \(f + N_C \ni 0\) forms the KKT system for a constrained optimization problem, then strong metric regularity is equivalent to the linear independence of the gradients of the active constraints and the strong second-order sufficient condition. These details are thoroughly discussed in \cite{DontchevRockafellar2014}; see also Examples 6 and 7 in Section \ref{sec:rpscandeg}.

We present in Subsection \ref{Sec:ExampleSDP} some examples of set-valued mapping that satisfy the previous concept.
The next lemma is a version in the Riemannian setting of Corollary 3F.3 given in \cite{DontchevRockafellar2014}. Its proof is similar to the corresponding result in Euclidean space and will be omitted here.

\begin{lemma}\label{lem:mrsslmrgiz}
Consider a mapping $ \Phi  \colon {\cal M}  \rightrightarrows \mathbb{R}^m$ and a point $(\bar{p}, \bar{x} ) \in \gph \Phi$. Then for every $g \colon {\cal M}  \to \mathbb{R}^m$ with $\lip(g;\bar{p})=0$, one has 
$\reg(g + \Phi; \bar{p}| g(\bar{p}) + \bar{x})=\reg( \Phi  ; \bar{p}|\bar{x})$.
\end{lemma}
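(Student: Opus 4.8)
The plan is to reduce everything to a single perturbation inequality and then exploit the symmetry of the hypothesis. Concretely, I will show that for \emph{any} mapping $\Psi\colon {\cal M}\rightrightarrows \mathbb{R}^m$, any $(\bar p,\bar z)\in\gph\Psi$, and any $h\colon{\cal M}\to\mathbb{R}^m$ with $\lip(h;\bar p)=0$, one has
\begin{equation}\label{eq:forwardMR}
\reg(h+\Psi;\bar p\,|\,h(\bar p)+\bar z)\le \reg(\Psi;\bar p\,|\,\bar z).
\end{equation}
Granting \eqref{eq:forwardMR}, the lemma follows at once: applying it with $(\Psi,h,\bar z)=(\Phi,g,\bar x)$ gives $\reg(g+\Phi;\bar p\,|\,g(\bar p)+\bar x)\le\reg(\Phi;\bar p\,|\,\bar x)$, while applying it with $(\Psi,h,\bar z)=(g+\Phi,-g,\,g(\bar p)+\bar x)$ — note $\lip(-g;\bar p)=0$, that $(-g)+(g+\Phi)=\Phi$, and that the base value equals $-g(\bar p)+g(\bar p)+\bar x=\bar x$ — yields the reverse inequality $\reg(\Phi;\bar p\,|\,\bar x)\le\reg(g+\Phi;\bar p\,|\,g(\bar p)+\bar x)$. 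The two inequalities together give equality, and the argument is valid even when one side is $+\infty$, since \eqref{eq:forwardMR} is trivially true whenever its right-hand side is infinite.

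To prove \eqref{eq:forwardMR} I would assume $\kappa\coloneqq\reg(\Psi;\bar p\,|\,\bar z)<\infty$, fix any $\sigma>\kappa$ and any $\varepsilon>0$ with $\varepsilon\sigma<1$, and set $\bar y\coloneqq h(\bar p)+\bar z$. Using $\lip(h;\bar p)=0$, choose $\delta>0$ so that $|h(p')-h(p)|\le\varepsilon\,d(p',p)$ for all $p',p\in{\cal B}_\delta(\bar p)$; using metric regularity of $\Psi$, choose $a,b>0$ so that $\gph\Psi\cap({\cal B}_a[\bar p]\times\mathbb{B}_b[\bar z])$ is closed and $d(p,\Psi^{-1}(z))\le\sigma\,d_e(z,\Psi(p))$ on ${\cal B}_a[\bar p]\times\mathbb{B}_b[\bar z]$. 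For $(p,y)$ in a suitably small neighborhood of $(\bar p,\bar y)$, I would build iterates $p_0=p$ and, having $p_j$, use metric regularity of $\Psi$ to select $p_{j+1}\in\Psi^{-1}\!\big(y-h(p_j)\big)$ with $d(p_j,p_{j+1})\le\sigma\,d_e\big(y-h(p_j),\Psi(p_j)\big)$ (up to an arbitrarily small, geometrically summable slack). The key estimate is the residual contraction: since $y-h(p_j)\in\Psi(p_{j+1})$,
\[
d_e\big(y-h(p_{j+1}),\Psi(p_{j+1})\big)\le \big|h(p_{j+1})-h(p_j)\big|\le \varepsilon\,d(p_j,p_{j+1}),
\]
so the residuals $r_j\coloneqq d_e(y-h(p_j),\Psi(p_j))$ satisfy $r_{j+1}\le\varepsilon\sigma\,r_j$ and the step lengths form a geometric series with $\sum_j d(p_j,p_{j+1})\le \frac{\sigma}{1-\varepsilon\sigma}\,r_0$.

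Because $\varepsilon\sigma<1$, the sequence $\{p_j\}$ is Cauchy; here completeness of ${\cal M}$ — the only place the manifold structure enters beyond the use of the Riemannian distance $d$ — guarantees a limit $q\in{\cal M}$. Continuity of $h$ together with closedness of $\gph\Psi$ on the chosen box passes the inclusion $y-h(p_j)\in\Psi(p_{j+1})$ to the limit, giving $y-h(q)\in\Psi(q)$, i.e. $q\in(h+\Psi)^{-1}(y)$, and $d(p,q)\le\frac{\sigma}{1-\varepsilon\sigma}\,d_e(y,(h+\Psi)(p))$ because $d_e(y,(h+\Psi)(p))=d_e(y-h(p),\Psi(p))=r_0$. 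Letting $\sigma\downarrow\kappa$ and $\varepsilon\downarrow0$ drives the constant to $\kappa$, which is \eqref{eq:forwardMR}. The main obstacle is the neighborhood bookkeeping: one must shrink the neighborhood of $(\bar p,\bar y)$ enough that every iterate stays in ${\cal B}_a[\bar p]$ and every perturbed value $y-h(p_j)$ stays in $\mathbb{B}_b[\bar z]$, so that metric regularity of $\Psi$ is applicable at each step, and the geometric-series bound on the total displacement is precisely what makes this controllable. Apart from substituting $d$ for the vector-space norm and invoking completeness for the Cauchy limit, no Riemannian ingredient is required, which is why the Euclidean argument carries over with only notational changes.
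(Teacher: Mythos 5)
The paper itself omits the proof of this lemma, deferring to Corollary 3F.3 of \cite{DontchevRockafellar2014} with the remark that the Euclidean argument carries over. Your plan reconstructs exactly that standard argument: a one-sided perturbation estimate $\reg(h+\Psi;\bar p\,|\,h(\bar p)+\bar z)\le\reg(\Psi;\bar p\,|\,\bar z)$ proved by Lyusternik--Graves-type iteration, followed by the symmetry trick with $\Psi=g+\Phi$, $h=-g$. The symmetry step, including the handling of the infinite case, is correct, and you correctly identify that the only Riemannian ingredients are the distance $d$ and completeness of ${\cal M}$ for the Cauchy limit.

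There is, however, a genuine gap in how you propose to close the iteration. Your total-displacement bound is $\sum_j d(p_j,p_{j+1})\le\frac{\sigma}{1-\varepsilon\sigma}\,r_0$ with $r_0=d_e\bigl(y,(h+\Psi)(p)\bigr)$, and you assert that shrinking the neighborhood of $(\bar p,\bar y)$ keeps every iterate in ${\cal B}_a[\bar p]$ and every value $y-h(p_j)$ in $\mathbb{B}_b[\bar z]$. But Definition~\ref{de:majcon} requires the estimate \eqref{def:MR} for \emph{every} pair $(p,y)$ in a box around $(\bar p,\bar y)$, and shrinking that box does not control $r_0$: nothing forces the set $\Psi(p)$ to pass anywhere near $y-h(p)$, so $r_0$ can be arbitrarily large for $(p,y)$ arbitrarily close to $(\bar p,\bar y)$, and for such pairs your iterates can leave ${\cal B}_a[\bar p]$ at the very first step, after which neither the metric regularity of $\Psi$ nor the small-Lipschitz estimate for $h$ is available. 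The standard repair is a threshold case-split: fix $c>0$; if $r_0\le c$, your iteration goes through once $\alpha+\frac{\sigma}{1-\varepsilon\sigma}c\le\min\{a,\delta\}$ and $\beta+\varepsilon\min\{a,\delta\}\le b$; if $r_0>c$, run the same iteration starting from $\bar p$ instead of $p$, where the initial residual is $d_e\bigl(y-h(\bar p),\Psi(\bar p)\bigr)\le\|y-\bar y\|_e\le\beta$ because $\bar z\in\Psi(\bar p)$, and conclude by the triangle inequality that, after further shrinking $\alpha$ and $\beta$, $d\bigl(p,(h+\Psi)^{-1}(y)\bigr)\le\alpha+\frac{\sigma}{1-\varepsilon\sigma}\beta\le\frac{\sigma}{1-\varepsilon\sigma}c<\frac{\sigma}{1-\varepsilon\sigma}r_0$. (Alternatively, one can avoid the split entirely by proving the Aubin property of $(h+\Psi)^{-1}$ at $\bar y$ for $\bar p$ --- there the starting point $q$ lies on the graph, so the initial residual for a nearby target $y'$ is at most $\|y-y'\|_e$ automatically --- and then invoking the equivalence of Theorem~\ref{teo:Aubin_MR}.)

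A second, smaller omission: Definition~\ref{de:majcon} also demands the local closedness condition \eqref{eq:loc.clo.me.re.def} for $\gph(g+\Phi)$, which your argument never verifies. It does follow from the corresponding hypothesis on $\gph\Phi$, since $(p,x)\mapsto(p,x-g(p))$ is continuous with continuous inverse near $(\bar p,\,g(\bar p)+\bar x)$ and carries $\gph(g+\Phi)$ onto $\gph\Phi$, so closedness of the truncated graph transfers; but a complete proof must record this, since the regularity modulus is by definition an infimum only over triples $(\sigma,a,b)$ for which both \eqref{eq:loc.clo.me.re.def} and \eqref{def:MR} hold.
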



In the theorem below, we demonstrate that the metric regularity condition of a mapping at $\bar p\in {\cal M} $ for $\bar x \in \mathbb{R}^m$ is equivalent to the same condition for its linearization at $\bar{p}$.

 \begin{theorem}\label{theo:lyus-graves-coro}
 Consider a normal ball $\B_r(\bar{p}) \subset {\cal M}$. Let  $f \colon {\cal M}  \to \mathbb{R}^m$ be a 
 	continuously differentiable function at $\bar{p}$ and $F \colon {\cal M}  \rightrightarrows \mathbb{R}^m$ be a set-valued mapping. 
 	 Then for  $G\colon {\cal M} \rightrightarrows \mathbb{R}^m$ defined by 
		\begin{equation}\label{eq:mappauxG}
	G(p) =
	\begin{cases}
	 f(\bar{p})+{\cal D}f(\bar p)[\exp^{-1}_{\bar{p}} p] + F(p), & p \in \B_r(\bar{p}),
		\\
		f(p) + F(p), & p\in {\cal M} \backslash \B_r(\bar{p}),
	\end{cases}
\end{equation} 
one has $  \reg(G; \bar p| 0)  =  \reg( f + F ; \bar p| 0 )$.
 \end{theorem}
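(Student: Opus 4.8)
The plan is to realize $G$ as a small perturbation of $f+F$ near $\bar p$ and then invoke Lemma~\ref{lem:mrsslmrgiz}. Concretely, I would introduce the single-valued map
$$
g(p) \coloneqq f(\bar p) + {\cal D}f(\bar p)[\exp^{-1}_{\bar p} p] - f(p), \qquad p \in \B_r(\bar p),
$$
so that on the normal ball $\B_r(\bar p)$ one has $G(p) = g(p) + \big(f(p)+F(p)\big)$. Note $g(\bar p) = 0$, since $\exp^{-1}_{\bar p}\bar p = 0_{\bar p}$ and ${\cal D}f(\bar p)[0_{\bar p}]=0$. Because metric regularity is a purely local property at $\bar p$ (the constants $a,b$ in Definition~\ref{de:majcon} may be taken arbitrarily small), the value $\reg(G;\bar p|0)$ depends only on the restriction of $G$ to $\B_r(\bar p)$; hence it suffices to compare $g + (f+F)$ with $f+F$.

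The crux is to prove that $\lip(g;\bar p)=0$. The quantity $p \mapsto f(\bar p)+{\cal D}f(\bar p)[\exp^{-1}_{\bar p}p]$ is exactly the first-order Taylor expansion of $f$ at $\bar p$ read in the normal chart $\exp_{\bar p}$, so $g$ is minus the first-order remainder of $f$. I would make this precise by passing to normal coordinates: writing $\tilde f \coloneqq f\circ \exp_{\bar p}$ and $u \coloneqq \exp^{-1}_{\bar p}p$, the identity $D\exp_{\bar p}(0_{\bar p}) = \id$ gives $D\tilde f(0_{\bar p}) = {\cal D}f(\bar p)$, whence
$$
g(\exp_{\bar p}u) = \tilde f(0_{\bar p}) + D\tilde f(0_{\bar p})[u] - \tilde f(u).
$$
Since $f$ is continuously differentiable around $\bar p$, so is $\tilde f$, and the map $u\mapsto \tilde f(u) - D\tilde f(0_{\bar p})[u]$ has derivative $D\tilde f(u)-D\tilde f(0_{\bar p})$, which tends to $0$ as $u\to 0_{\bar p}$. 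A mean value estimate then bounds the Euclidean Lipschitz constant of $g\circ\exp_{\bar p}$ on a ball $B_\rho(0_{\bar p})$ by $\sup_{\|u\|\le \rho}\|D\tilde f(u)-D\tilde f(0_{\bar p})\|_{map}$, which vanishes as $\rho \to 0$. Finally, as $\exp_{\bar p}$ is a diffeomorphism with $D\exp_{\bar p}(0_{\bar p})=\id$, the Riemannian distance $d(p',p)$ and the Euclidean distance $\|u'-u\|$ are comparable with ratio tending to $1$ as $p',p\to\bar p$; transporting the bound back to $\cal M$ yields $\lip(g;\bar p)=0$.

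With $\lip(g;\bar p)=0$ in hand, I would conclude by applying Lemma~\ref{lem:mrsslmrgiz} to $\Phi = f+F$ at the point $(\bar p, 0)\in \gph(f+F)$, using a global extension $\hat g$ of $g$ (e.g. a smooth cutoff times $g$) that coincides with $g$ near $\bar p$ and therefore still satisfies $\lip(\hat g;\bar p)=0$. The lemma gives
$$
\reg\big(\hat g + (f+F);\, \bar p \,\big|\, \hat g(\bar p)+0\big) = \reg\big(f+F;\,\bar p\,\big|\,0\big),
$$
and since $\hat g(\bar p)=0$ and $\hat g+(f+F)$ coincides with $G$ on $\B_r(\bar p)$, the locality of the regularity modulus identifies the left-hand side with $\reg(G;\bar p|0)$, completing the argument.

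The main obstacle is the middle step: establishing $\lip(g;\bar p)=0$ rather than merely ${\cal D}g(\bar p)=0$. Pointwise differentiability at $\bar p$ does not suffice, as one needs the \emph{strict} two-point estimate; this is precisely where continuous differentiability of $f$ enters, together with the care required to transfer estimates between the Riemannian distance on $\cal M$ and the Euclidean distance in the normal chart so that the vanishing of the Lipschitz modulus is preserved.
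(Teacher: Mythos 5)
Your proposal follows the same skeleton as the paper's proof: write $G = g + (f+F)$ for a single-valued perturbation $g$ with $g(\bar p)=0$, prove $\lip(g;\bar p)=0$, and invoke Lemma~\ref{lem:mrsslmrgiz}. The differences are in the two technical steps. For $\lip(g;\bar p)=0$, the paper argues intrinsically, via the Riemannian fundamental theorem of calculus (Proposition~\ref{lem:properepsil}), the geodesic identity of Proposition~\ref{lem:geo.mpoit.exp}, and the uniform two-point estimate of Lemma~\ref{lem:assumptionproof}; you instead pull everything back through the normal chart, use $D\exp_{\bar p}(0_{\bar p})=\id$ plus a mean value bound for $\tilde f=f\circ\exp_{\bar p}$, and transfer the estimate back using the comparability of $d$ with the chart distance. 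Your route is legitimate and arguably more elementary (indeed you only need the distance ratio to stay bounded, not tend to $1$, since the modulus of continuity of $D\tilde f$ already supplies the vanishing factor); the paper's route has the side benefit that Lemma~\ref{lem:assumptionproof} is reused in the convergence theorems anyway.

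The one soft spot is your gluing step. Because you extend $g$ by a smooth cutoff, $\hat g+(f+F)$ agrees with $G$ only near $\bar p$, and you then appeal twice to the principle that $\reg(\cdot\,;\bar p|0)$ depends only on the germ of the mapping at $\bar p$. That principle is true, but your parenthetical justification ("$a,b$ may be taken arbitrarily small") is not a proof: the quantity $d(p,\Phi^{-1}(x))$ in \eqref{def:MR} involves the \emph{global} preimage $\Phi^{-1}(x)$, so two mappings agreeing near $\bar p$ can have different solution sets far away, and one must rule out that distant solutions create or destroy regularity. (The fix is short: when $d_e(x,\Phi(p))$ is small, any near-optimal point of $\Phi^{-1}(x)$ lies within distance $\sigma\, d_e(x,\Phi(p)) < r-a$ of $p$, hence inside the ball where the mappings coincide; when $d_e(x,\Phi(p))$ is bounded below, the solution near $\bar p$ guaranteed by regularity at $\bar x\in\Phi(\bar p)$ suffices after shrinking $a$ and $b$.) The paper avoids this issue entirely, and so could you: extend $g$ by zero outside $\B_r(\bar p)$, exactly as in \eqref{eqref:Gfun}. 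Then $g+(f+F)$ equals $G$ identically on all of ${\cal M}$ — the discontinuity of $g$ across the boundary of the ball is harmless, since $\lip(g;\bar p)$ only sees points near $\bar p$ — and Lemma~\ref{lem:mrsslmrgiz} yields the conclusion in one stroke, with no locality argument at all.
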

 \begin{proof}
 	Pick $\epsilon > 0$ arbitrary. By Lemma~\ref{lem:assumptionproof} given in  the Appendix of this paper, there exists a totally normal ball $\B_{\delta_{\epsilon}}(\bar{p}) \subset \B_r(\bar{p})$ such that
 	\begin{equation}\label{eq:assump:lipsh.cond.Theolrm} 
 		\|{\cal D}f(p) [\exp_{p}^{-1}p' - \exp_{p}^{-1}p''] -  {\cal D}f(\bar{p})[ \exp_{\bar{p}}^{-1}p' - \exp_{\bar{p}}^{-1}p'']\|_e 
 		\leq \epsilon d(p',p''), \qquad p, p',p''\in \B_{\delta_{\epsilon}}(\bar{p}).
 	\end{equation}
	Consider a function $g \colon {\cal M} \to  \mathbb{R}^m$ given by
	\begin{equation}\label{eqref:Gfun}
	g(q) =
	\begin{cases}
	 f(\bar{p})-f(q)+{\cal D}f(\bar p)[\exp^{-1}_{\bar{p}} q] , & q \in \B_{r}(\bar{p}),
		\\
		\qquad 0, & q\in {\cal M} \backslash \B_{r}(\bar{p}).
	\end{cases}
\end{equation}
 In particular, this function satisfies
 \begin{equation}\label{eq:prthemrssl.dlg}
 \|g(p'') - g(p')\|_e = \|f(p') - f(p'') - {\cal D}f(\bar p)[ \exp^{-1}_{\bar{p}} p' - \exp^{-1}_{\bar{p}} p''] \|_e, \qquad p',p'' \in \B_{\delta_{\epsilon}}(\bar{p}).
 \end{equation}
The first part of Proposition~\ref{lem:properepsil}  guarantees that there exists $\delta \in (0, \delta_{\epsilon})$ such that 
 $$
f(p') - f(p'')  =   \int_{0}^{1}	{\cal D}f(\gamma_{p''p'}(t)) [\dot{\gamma}_{p''p'}(t)] \dt ,    \qquad p',p''  \in \B_{\delta}(\bar{p}),
 $$
 where $\gamma_{p''p'} \colon [0,1] \to {\cal M}$ is the geodesic satisfying $\gamma_{p''p'}(0)=p''$ and $\gamma_{p''p'}(1)=p'$. Adding $-{\cal D}f(\bar{p})[  \exp^{-1}_{\bar p} p'  -   \exp^{-1}_{\bar p} p''  ]$ to both sides, it follows that 
 \begin{multline*}
 	f(p') - f(p'') - {\cal D}f(\bar p)[ \exp^{-1}_{\bar{p}} p' - \exp^{-1}_{\bar{p}} p'']  =  \\ \int_{0}^{1}	{\cal D}f(\gamma_{p''p'}(t)) [\dot{\gamma}_{p''p'}(t)]  - {\cal D}f(\bar p)[ \exp^{-1}_{\bar{p}} p' - \exp^{-1}_{\bar{p}} p''] \dt ,    \qquad p',p''  \in \B_{\delta}(\bar{p}).
 \end{multline*}
Applying norm on both sides, we conclude that
  \begin{multline}\label{eq:ptmrssli.ineq}
 \|  f(p') - f(p'') - {\cal D}f(\bar p)[ \exp^{-1}_{\bar{p}} p' - \exp^{-1}_{\bar{p}} p'']  \|_{e}  \leq   \\ \int_{0}^{1}\| {\cal D}f(\gamma_{p''p'}(t)) [\dot{\gamma}_{p''p'}(t)]  - {\cal D}f(\bar p)[ \exp^{-1}_{\bar{p}} p' - \exp^{-1}_{\bar{p}} p''] \|_e \dt ,    \qquad p',p''  \in \B_{\delta}(\bar{p}).
 \end{multline}	
On the other hand, from Proposition~\ref{lem:geo.mpoit.exp} with $p=p''$ and $q=p'$ we have
$$
\dot{\gamma}_{p''p'}(t) = \exp_{\gamma_{p''p'}(t)}^{-1}p' -  \exp_{\gamma_{p''p'}(t)}^{-1}p'', \qquad t\in [0,1].
$$
From this equality and  \eqref{eq:assump:lipsh.cond.Theolrm}  with $p=\gamma_{p''p'}(t)$,  \eqref{eq:ptmrssli.ineq} yields
  \begin{equation*}
	\|f(p') - f(p'') - {\cal D}f(\bar p)( \exp^{-1}_{\bar{p}} p' - \exp^{-1}_{\bar{p}} p'') \|_e  \leq \epsilon  d(p',p'') ,    \qquad p',p''  \in \B_{\delta}(\bar{p}).
\end{equation*}
Then \eqref{eq:prthemrssl.dlg} implies that $\| g(p'') - g(p') \|_e  \leq \epsilon  d(p',p'')$ for all  $p',p''  \in \B_{\delta}(\bar{p})$. Since $\epsilon>0$ was chosen arbitrarily,   we can conclude that $\lip(g;\bar{p})=0$. Finally, to obtain the desired equality, just use Lemma~ \ref{lem:mrsslmrgiz} with $g$ as in \eqref{eqref:Gfun} and $\Phi=f+F$.
 \end{proof}

  Next, we define the Aubin property for a set-valued mapping \( \aleph \colon \mathbb{R}^m \rightrightarrows \mathcal{M}\). To this end, it is necessary to introduce the concept of excess between subsets of \(\mathcal{M}\). For sets \({\cal W}_1\) and \({\cal W}_2\) in \(\mathcal{M}\), the \textit{excess of \({\cal W}_1\) beyond \({\cal W}_2\)} is defined as
\[ e({\cal W}_1, {\cal W}_2) = \sup_{p \in {\cal W}_1} d(p, {\cal W}_2), \]
with the convention that \(e(\emptyset, {\cal W}_2) = 0\) for \({\cal W}_2 \neq \emptyset\) and \(e(\emptyset, {\cal W}_2) = \infty\) otherwise.

\begin{definition}\label{def:Aubin_property}
A set-valued mapping \( \aleph \colon \mathbb{R}^m \rightrightarrows \mathcal{M} \) is said to have the Aubin property at \(\bar{x} \in \mathbb{R}^m\) for \(\bar{p} \in \mathcal{M}\) if \(\bar{p} \in \aleph(\bar{x})\), and there exist positive constants \(\sigma\), \(a\), and \(b\) such that
\[
e\left( \aleph(x) \cap \mathcal{B}_{a}[\bar{p}], \aleph(x') \right) \le \sigma \|x - x'\| \quad \text{for all } x, x' \in \mathbb{B}_{b}[\bar{x}].
\]
The infimum of \(\sigma\) over all such combinations of \(\sigma\), \(a\), and \(b\) is called the Lipschitz modulus of \(\aleph\) at \(\bar{x}\) for \(\bar{p}\) and is denoted by \(\operatorname{lip}(\aleph; \bar{x}|\bar{p})\). The absence of this property is indicated by \(\operatorname{lip}(\aleph; \bar{x}|\bar{p}) = \infty\).
\end{definition}
The next result provides a relationship between the metric regularity of a set-valued mapping $\Phi \colon {\cal M} \rightrightarrows \mathbb{R}^m$ and the Aubin property for its inverse. Its proof follows from a straightforward adaptation of the demonstration in \cite[Theorem 3E.6]{DontchevRockafellar2014} and the concepts introduced above, and hence will be omitted.

\begin{theorem}\label{teo:Aubin_MR}
A set-valued mapping $\Phi \colon {\cal M} \rightrightarrows \mathbb{R}^m$ is metrically regular at $\bar{p}$ for $\bar{x}$ with a constant $\sigma$ if and only if its inverse $\Phi^{-1} : \mathbb{R}^m \rightrightarrows {\cal M}$ has the Aubin property at $\bar{x}$ for $\bar{p}$ with constant $\sigma$.
Thus,
$
\operatorname{lip}(\Phi^{-1}; \bar{x}|\bar{p}) = \operatorname{reg}(\Phi; \bar{p}|\bar{x}).
$
\end{theorem}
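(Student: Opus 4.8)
The plan is to prove the equivalence by unwinding Definitions~\ref{de:majcon} and~\ref{def:Aubin_property} and observing that both are re-descriptions of a single inequality attached to $\gph\Phi$, read once in the $p$-direction and once in the $x$-direction. Since $\Phi^{-1}$ is obtained from $\Phi$ merely by swapping the two coordinates of the graph, and since $({\cal M},d)$ is a complete metric space, every estimate in the Euclidean argument of \cite[Theorem~3E.6]{DontchevRockafellar2014} survives upon replacing the Euclidean distance on the domain by the Riemannian distance $d$; the local closedness condition \eqref{eq:loc.clo.me.re.def} likewise transfers because it concerns $\gph\Phi$ as a subset of ${\cal M}\times\mathbb{R}^m$, whose closedness is preserved by the homeomorphism swapping factors. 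No curvature or geometric hypothesis beyond completeness enters. I would carry out the two implications separately and track the constant $\sigma$ in each.

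For the forward implication ($\Phi$ metrically regular $\Rightarrow$ $\Phi^{-1}$ has the Aubin property with the same $\sigma$), I would fix the constants $\sigma,a,b$ from \eqref{def:MR}. Given $x,x'\in\mathbb{B}_b[\bar x]$ and any $p\in\Phi^{-1}(x)\cap{\cal B}_a[\bar p]$, one has $x\in\Phi(p)$, hence $d_e(x',\Phi(p))\le\|x-x'\|$; applying \eqref{def:MR} at the pair $(p,x')$ gives $d(p,\Phi^{-1}(x'))\le\sigma\,d_e(x',\Phi(p))\le\sigma\|x-x'\|$. Taking the supremum over all such $p$ yields the excess bound $e(\Phi^{-1}(x)\cap{\cal B}_a[\bar p],\Phi^{-1}(x'))\le\sigma\|x-x'\|$, which is exactly the Aubin property with the same constant, whence $\lip(\Phi^{-1};\bar x|\bar p)\le\reg(\Phi;\bar p|\bar x)$.

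For the reverse implication I would start from the Aubin constants $\sigma,a',b'$ of $\Phi^{-1}$ and manufacture metric regularity constants for $\Phi$. Fix a pair $(p,x')$ with $p$ near $\bar p$ and $x'$ near $\bar x$. If $\Phi(p)=\varnothing$ the inequality \eqref{def:MR} is vacuous, since $d_e(x',\Phi(p))=+\infty$, so assume $\Phi(p)\ne\varnothing$ and, for $\epsilon>0$, pick an approximate projection $x\in\Phi(p)$ with $\|x-x'\|\le d_e(x',\Phi(p))+\epsilon$. Then $p\in\Phi^{-1}(x)$, and \emph{provided} $x\in\mathbb{B}_{b'}[\bar x]$ and $p\in{\cal B}_{a'}[\bar p]$, the Aubin bound gives $d(p,\Phi^{-1}(x'))\le e(\Phi^{-1}(x)\cap{\cal B}_{a'}[\bar p],\Phi^{-1}(x'))\le\sigma\|x-x'\|\le\sigma(d_e(x',\Phi(p))+\epsilon)$; letting $\epsilon\downarrow0$ recovers \eqref{def:MR} with the same $\sigma$, and the infima defining the moduli then give $\reg(\Phi;\bar p|\bar x)\le\lip(\Phi^{-1};\bar x|\bar p)$. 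Combined with the forward bound, this produces the claimed equality of moduli.

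The main obstacle is exactly the proviso ``$x\in\mathbb{B}_{b'}[\bar x]$'' in the reverse step: the approximate projection $x$ need not a priori lie in the ball where the Aubin estimate is valid. I would resolve this by a two-regime radius bookkeeping controlled by a threshold $T$, shrinking $a\le a'$ and $b\le b'$ and taking $\epsilon$ small. When $d_e(x',\Phi(p))\le T$ the approximate projection satisfies $\|x-\bar x\|\le\|x-x'\|+\|x'-\bar x\|\le T+\epsilon+b$, which lies in $\mathbb{B}_{b'}[\bar x]$ once $T+b\le b'$, so the estimate above applies. When $d_e(x',\Phi(p))>T$ I would instead bound $d(p,\Phi^{-1}(x'))$ directly: applying the Aubin property to the pair $(\bar x,x')$ and using $\bar p\in\Phi^{-1}(\bar x)\cap{\cal B}_{a'}[\bar p]$ gives $d(\bar p,\Phi^{-1}(x'))\le\sigma\|\bar x-x'\|\le\sigma b$, hence $d(p,\Phi^{-1}(x'))\le d(p,\bar p)+\sigma b\le a+\sigma b$; choosing $T$ with $a/\sigma+b\le T\le b'-b$ (possible for $a,b$ small, e.g.\ $T=b'/2$) makes $a+\sigma b\le\sigma T<\sigma\,d_e(x',\Phi(p))$, so \eqref{def:MR} holds in this regime too. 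Verifying that these inequalities among $a,b,T,b'$ are simultaneously satisfiable, and that the local closedness \eqref{eq:loc.clo.me.re.def} required by Definition~\ref{de:majcon} is inherited from the symmetric graph closedness, is the only genuinely technical point; everything else is a transcription of the metric-space argument.
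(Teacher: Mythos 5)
Your two distance estimates are correct, and they are precisely the argument the paper has in mind: the paper omits the proof of Theorem~\ref{teo:Aubin_MR}, describing it as a straightforward adaptation of \cite[Theorem 3E.6]{DontchevRockafellar2014}, and both your forward implication (same constants $\sigma,a,b$, taking the supremum over $p\in\Phi^{-1}(x)\cap{\cal B}_a[\bar p]$) and your reverse implication (approximate projection plus the two-regime threshold $T$, with the far regime $d_e(x',\Phi(p))>T$ handled via $d(p,\Phi^{-1}(x'))\le d(p,\bar p)+d(\bar p,\Phi^{-1}(x'))$ and the Aubin estimate at the pair $(\bar x,x')$) are exactly that adaptation. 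The constraint bookkeeping $a/\sigma+b\le T$, $T+b\le b'$, $a\le a'$, $b\le b'$ is simultaneously satisfiable as you claim; the only slip is that you need a strict margin $T+b<b'$ (or $T+b+\epsilon_0\le b'$ for a fixed $\epsilon_0>0$) so that the $+\epsilon$ coming from the approximate projection can be absorbed, but this is cosmetic.

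The one genuine gap is the closedness clause, which you flag but then dispose of incorrectly. The paper's Definition~\ref{de:majcon} makes condition \eqref{eq:loc.clo.me.re.def} part of metric regularity, whereas Definition~\ref{def:Aubin_property} contains no closedness requirement at all; hence in the reverse implication there is nothing from which local closedness of $\gph\Phi\,\cap({\cal B}_a[\bar p]\times\mathbb{B}_b[\bar x])$ could be ``inherited.'' Your remark about the factor-swapping homeomorphism would be valid if the Aubin property carried a symmetric closedness condition on $\gph\Phi^{-1}$, but it does not, and the condition cannot be derived from the excess estimates: take ${\cal M}=\mathbb{R}$, $\gph\Phi=(\mathbb{R}\times\mathbb{R})\setminus\{(1/n,1/n)\colon n\in\mathbb{N}\}$, $\bar p=\bar x=0$; then every distance appearing in \eqref{def:MR} and in the Aubin excess bounds vanishes, so both inequalities hold with any $\sigma>0$, yet \eqref{eq:loc.clo.me.re.def} fails for every $a,b>0$. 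To be fair, this asymmetry sits in the paper's own definitions rather than in your argument alone: in \cite{DontchevRockafellar2014} metric regularity carries no closedness clause, so Theorem 3E.6 there is a clean equivalence, while here the statement is literally correct only in one direction. Your proof becomes complete once you either add a local graph-closedness requirement to the Aubin property (so that the swap argument applies) or state that the reverse implication is understood modulo the closedness clause of Definition~\ref{de:majcon}; what cannot stand is the claim that the closedness ``transfers.''
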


\subsection{Example in the SPD Matrices Cone}\label{Sec:ExampleSDP}

In this section, some examples of metrically regular mappings $\Phi$ on a particular manifold ${\cal M}$ are presented.

Denote the set of symmetric matrices of size $n \times n$ for some $n\in \mathbb{N}$ by $ {\cal P}(n)$, and   the cone of symmetric positive definite matrices by ${\cal M}= {\cal P}_{+}(n) $. 
 The latter is endowed with the affine invariant Riemannian metric given by   
\begin{equation}\label{eq:metric}
	\langle v,u \rangle\coloneqq \mbox{tr} (vp^{-1}up^{-1}),\qquad p\in \mathcal{M}, \quad v,u\in
	\mathcal{T}_p\mathcal{M},
\end{equation}
where $\mbox{tr}$ denotes the trace of a matrix. The tangent space $\mathcal{T}_p\mathcal{M}$ can be identified with $ {\cal P}(n)$.
Moreover,  $\mathcal{M}$ is a Hadamard manifold,  see,  for example, \cite[Theorem 1.2. p. 325]{Lang1999}.
The Riemannian distance is given by 
\begin{equation}\label{eq:KarcherMeanSDP}
	d( p,q) =  \tr^{ \frac{1}{2} } ( \ln^2 ( p^{-\frac{1}{2}}qp^{-\frac{1}{2}} ) ) , \qquad p,q \in \mathcal{M},
\end{equation}
where $\ln$ denotes the matrix logarithm. The Riemannian gradient of $f\colon \cal M \to \mathbb{R}$ at $p$ is given by
\begin{equation}\label{eq:riema.grad.spdm}
	\grad f(p)= p f'(p) p,
	\end{equation}
where $ f'(p) $ represents the Euclidean gradient of $f$ at $p$, see \cite{SraHosseini2015}. The identity matrix of size $n \times n$ will be denoted by $\id$. 

In the following two examples, we verify the property of metric regularity for single-valued functions.

\begin{example}\label{ex:1}
	Consider the function $\Phi\colon \mathcal{M} \to {\mathbb R}$ defined by $\Phi(p)= \ln(\tr(p))$. For every $x \in \mathbb{R}$, the following equality holds:
	\begin{equation}\label{eq.calapex1and3}
	\Phi ( e^{x - \ln({\tr(p)})} p )
	= \ln (  \tr( e^{x - \ln({\tr(p)})} p)) 
	=  \ln (  e^{x - \ln({\tr(p)})} \tr ( p ) ) 
	 =x,   \qquad   p  \in \mathcal{M}.
	\end{equation}
	This implies that   
	$\{e^{x- \ln({\tr(p)}) } p \colon p  \in \mathcal{M} \}  \subset \Phi^{-1}(x)$ for all $x\in \mathbb{R}.$ Thus, 
	using \eqref{eq:KarcherMeanSDP}  and simple algebraic manipulations, we obtain
	\begin{align*}
	d( \Phi^{-1}(x), q) &\leq d( e^{x- \ln({\tr(q)}) }  q , q),     \\
	&=  \tr^{\frac{1}{2}}(  | x - \ln({\tr(q)})  |^2 \id),  \\
	&= \sqrt{n} \, |  x - \ln({\tr(q)}) | =  \sqrt{n} \, |  x - \Phi(q) |
	, \qquad  x\in \mathbb{R}, \quad  q  \in \mathcal{M}.
	\end{align*}
	On the other hand, since $\Phi$ is Euclidean differentiable, it follows from \eqref{eq:riema.grad.spdm} that $\Phi$ will also be Riemannian differentiable with respect to metric \eqref{eq:metric}. Therefore, $\Phi$ is continuous and, by Proposition~\ref{prop:gph.closed}, it has closed graph. Given this and the last inequality above, we conclude that $\Phi$ is metrically regular at $p$ for $\Phi(p)$ for all $p\in {\cal M}$.
\end{example}

\begin{example} \label{ex:2}
	Define the function $\Phi\colon {\cal M} \to {\mathbb R}$ by $\Phi(p)=1/\tr(p)$. Note that
      $\{( x \tr(p) )^{-1} p \colon p  \in \mathcal{M} \}  \subset \Phi^{-1}(x)$ for all $x\in \mathbb{R} \backslash \{0\} $.
	Using this and  \eqref{eq:KarcherMeanSDP}, we obtain
	\begin{align}\label{eq:ex2.eqli}
	d( \Phi^{-1}(x), p) &\leq d( (x \tr(p) )^{-1} p , p)  \nonumber \\
	&= \tr^{\frac{1}{2}}(\ln^2( x \tr(p)\id))= \sqrt{n} \, |\ln(x \tr(p) )|
	,  \qquad  x\in \mathbb{R} \backslash \{0\}, \quad  p  \in \mathcal{M}.
	\end{align}  
      Now recall that 
	\begin{equation}\label{eq:ex3.eqli}
		\tr(p) = \lambda_1(p) + \ldots + \lambda_n(p),
	\end{equation}
	where $\lambda_1(p),\ldots,\lambda_n(p)$ are the eigenvalues of $p$. Pick a constant $a>0$. From \eqref{eq:KarcherMeanSDP}, for all  $i\in \{1,\ldots,n\}$, one has
     \begin{equation*}	
     |\ln(\lambda_i(p))| <	(\ln^2(\lambda_1(p)) + \ldots + \ln^2(\lambda_n(p)))^{\frac{1}{2}} =  \tr^{\frac{1}{2}}(\ln^2 (p))= d(p , \id ) < a, \qquad p \in \B_{a}(\id).
     \end{equation*}
 Then $e^{- a} <  \lambda_i(p)  < e^{a}$ holds for all $p \in \B_{a}(\id)$ and $i=1,\ldots,n$. Consequently
$$
ne^{- a} <  \lambda_1(p)  + \ldots + \lambda_n(p)< ne^{a}, \qquad  p \in \B_{a}(\id).
$$
With \eqref{eq:ex3.eqli}, we conclude that $1/\tr(p) \in (e^{-a}/n, e^{a}/n)$ for all $p \in \B_{a}(\id)$. Since the  function $\ln $ is Lipschitz on  the interval $(e^{-a}/n , e^{a}/ n)$ (its gradient is bounded on this interval), there exists $\sigma>0$ such that 
 \begin{equation}\label{ex2impdpe4}
 	|\ln(x \tr(p) )| =
	 \left| \ln(x)-\ln(  1/\tr(p)  ) \right|   \leq 
	\sigma \left| x-  1/\tr(p)   \right|    ,  \qquad  x\in (e^{-a}/n, e^{a}/n), \quad  p \in \B_{a}(\id).
\end{equation} 
Therefore, \eqref{eq:ex2.eqli} and \eqref{ex2impdpe4} yield
\begin{equation*}
	d( \Phi^{-1}(x), p)   \leq 
	\sqrt{n}\,\sigma \left| x-  \Phi(p)   \right|    ,  \qquad  x\in (e^{-a}/n, e^{a}/n), \quad  p \in \B_{a}(\id).
\end{equation*} 
Furthermore, taking into account the argument presented at the end of Example~\ref{ex:1}, it can be concluded that $\gph\Phi$ is closed. Then $\Phi$ is metrically regular  at $p$ for $\Phi(p)$ for all $p \in \B_{a}(\id)$. 
\end{example}

We now present two examples involving set-valued mappings. In these examples, we demonstrate that the given mappings are metrically regular at specific points in \({\cal M}\).

\begin{example}\label{ex:3}
	Define the set-valued mapping $\Phi\colon \mathcal{M} \rightrightarrows {\mathbb R}$ by 
	\begin{equation}\label{eqref:setvalued.ex}
	\Phi(p) =
	\begin{cases}
		\,\, \ln(\tr(p)), & p \in {\cal M} \backslash (1/n\id),
		\\
		 \{0\} \cup [1,2], & p= 1/n\id.
	\end{cases}
\end{equation}
By \eqref{eqref:setvalued.ex} and the calculations in  \eqref{eq.calapex1and3}, if $x\in \mathbb{R}$ and $q  \in \mathcal{M}$ satisfy $e^{x - \ln({\tr(q)})} q  \in \mathcal{M} \backslash (1/n\id)$ then $\Phi ( e^{x - \ln({\tr(q)})} q )
	\ni \ln (  \tr( e^{x - \ln({\tr(q)})} q)) 
	=x$. Now, consider the case $e^{x - \ln({\tr(q)})} q  = 1/n\id$. By applying $\tr$ on both sides, it follows that  $e^{x - \ln({\tr(q)})} \tr(q)  = 1$. Clearly, this implies that $x=0$. Therefore, by \eqref{eqref:setvalued.ex}, we get $\Phi ( e^{x - \ln({\tr(q)})} q ) \ni x$ for this case as well. Thus,
	$\{e^{x- \ln({\tr(q)}) } q \colon q  \in \mathcal{M} \}  \subset \Phi^{-1}(x)$ holds for all $x\in \mathbb{R}$, which implies
		\begin{align*}
	d( \Phi^{-1}(x), q) &\leq d( e^{x- \ln({\tr(q)}) }  q , q),     \\
	&=  \tr^{\frac{1}{2}}(  | x - \ln({\tr(q)})  |^2 \id) = \sqrt{n} \, |  x - \ln({\tr(q)}) | 
	, \qquad  x\in \mathbb{R}, \quad  q  \in \mathcal{M}.
	\end{align*}
	On the other hand, it follows from \eqref{eqref:setvalued.ex} that 
	$$
         |  x - \ln({\tr(q)}) | =  d_e(  x , \Phi(q) ), \qquad  x\in (-\infty,1/2), \quad  q  \in \mathcal{M}.
	 $$
	Overall, we conclude
	$$
	   d( \Phi^{-1}(x), q) \leq  \sqrt{n} \, d_e(  x , \Phi(q) ), \qquad  x\in (-\infty,1/2), \quad  q  \in \mathcal{M}.
	   $$
       With a justification analogous to the one presented in Example~\ref{ex:1}, we state that $\Phi$ has closed graph. Therefore,  $\Phi$ is metrically regular at $p$ for $\ln(\tr(p))$ for all $p\in \{ q\in {\cal M} \colon \ln({\tr(q)}) \in (-\infty,1/2)\}$.
\end{example}

	\begin{example} \label{ex:4}
         Consider the set-valued mapping $\Phi\colon \mathcal{M} \rightrightarrows {\mathbb R}$ by 
\begin{equation}\label{eqref:setvalued.ex4}
	\Phi(p) =
	\begin{cases}
		\,\,\,\,  1/\tr(p), & p \in {\cal M} \backslash \id,
		\\ 
		 \{1/n\} \cup [2,3], & p= \id.
	\end{cases}
\end{equation}
Following the steps of Example~\ref{ex:3}, we can show that  $
	\Phi ( 1/ ( x \tr(q) ) \, q )
	\ni 1/\tr(1/ ( x \tr(q) ) \, q) 
	=x$ holds for all $x\in \mathbb{R} \backslash \{0\} $, $q  \in \mathcal{M}$.
      This means that $\{ 1/ ( x \tr(q) ) \, q \colon q  \in \mathcal{M} \}  \subset \Phi^{-1}(x)$ for all $x\in \mathbb{R} \backslash \{0\} $, and hence
	\begin{align*}
	d( \Phi^{-1}(x), q) &\leq d( (x \tr(q) )^{-1} q , q)   \\
	&= \tr^{\frac{1}{2}}(\ln^2( x \tr(q)\id)) = \sqrt{n} \, |\ln(x \tr(q) )|
	,  \qquad  x\in \mathbb{R}\backslash \{0\}, \quad  q  \in \mathcal{M}.
	\end{align*}  
	From 
	 \eqref{ex2impdpe4},  for all  $a>0$ there exists $\sigma>0$ such that
        \begin{equation*}\label{eximpdpe4}
 	 d( \Phi^{-1}(x), q) \leq 
	\sigma \left| x-  1/\tr(q)   \right|    ,  \qquad  x\in (e^{-a}/n, e^{a}/n), \quad  q \in \B_{a}(\id).
\end{equation*} 
Pick $a>0$. Exploiting the definition of $\Phi$ in \eqref{eqref:setvalued.ex4}, it follows that
  \begin{equation*}\label{eximpdpe4mri}
 	 d( \Phi^{-1}(x), q) \leq 
	\sigma d_e( x,  \Phi(q)  )   ,  \qquad  x\in (e^{-a}/n, \min\{e^{a}/n,1 \}), \quad  q \in \B_{a}(\id).
\end{equation*} 
Combining this with the fact that 
 $\gph\Phi$ is closed, we conclude that $\Phi$ is metrically regular  at $p$ for $1/\tr(p)$ for all $p\in \{ q\in {\cal M} \colon 1/\tr(q) \in (e^{-a}/n, \min\{e^{a}/n,1 \}) \}$.
\end{example}

\section{Convergence}\label{sec:conv}

In this section, we will establish three convergence theorems for the sequence generated by the inexact Newton method described in \eqref{eq:INM}. The proofs presented here rely on a Riemannian version of Theorem 5G.3 from \cite{DontchevRockafellar2014}, which is presented in the following lemma. The proof of this lemma is a straightforward adaptation of its Euclidean version and will be omitted.
\begin{lemma}\label{pert.reg}
	Consider a set-valued mapping $F\colon \mathcal{M} \rightrightarrows \mathbb{R}^m$ and a point $(\bar p, \bar x)\in \gph F$ at which $F$ is metrically regular with positive constants $\sigma$, $a$ and $b$  satisfying \eqref{eq:loc.clo.me.re.def}-\eqref{def:MR}. Let $\nu>0$ be such that $\sigma \nu < 1$ and  $\sigma'>\sigma /(1-\sigma \nu)$. Then for every positive $\alpha$ and $\beta$ such that
	$$
	\alpha \leq a/2, \quad \nu \alpha + 2 \beta \leq b, \quad 2 \sigma' \beta \leq \alpha 
	$$
	and for every function $g \colon {\cal M} \to \mathbb{R}^m$ satisfying 
	\begin{equation*}
		\| g(\bar{p}) \|_{e} \leq \beta, \quad \|g(q) - g(q')\|_e \leq \nu d(q,q'),  \quad  q,q' \in {\cal B}_{\alpha}[\bar{p}],
	\end{equation*}
	the mapping $g+F$ has the following property: for every $x,x'\in \mathbb{B}_{\beta}[\bar x]$ and every $p\in(g+F)^{-1}(x)\cap {\cal B}_{\alpha}[\bar p]$ there exists $p'\in (g+F)^{-1}(x')$ such that
	$
	d(p,p')\leq  \sigma'  \| x - x' \|_e.
	$
\end{lemma}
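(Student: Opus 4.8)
The plan is to obtain $p'$ as the limit of a Picard-type successive-approximation scheme that repeatedly corrects $p$ toward a solution of $x'\in(g+F)(p')$, exactly along the lines of the Euclidean argument in \cite{DontchevRockafellar2014}; the only genuinely Riemannian ingredient is that the geodesic distance $d$ makes $\mathcal{M}$ a complete metric space (Hopf--Rinow), so that the Cauchy sequence I build converges. First I would fix an auxiliary modulus $\tau$ with $\sigma<\tau$, $\tau\nu<1$ and $\tau/(1-\tau\nu)\le\sigma'$, which is possible since $s\mapsto s/(1-s\nu)$ is continuous and $\sigma'>\sigma/(1-\sigma\nu)$. The slight enlargement $\tau>\sigma$ is needed because the infimum defining $d(p_n,F^{-1}(\cdot))$ need not be attained, so at each step the metric-regularity bound \eqref{def:MR} can only be realized up to the factor $\tau/\sigma$.

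Set $p_0:=p$, so that $x-g(p_0)\in F(p_0)$ and $p_0\in\mathcal{B}_\alpha[\bar p]$. Given $p_n$, put $y_n:=x'-g(p_n)$ and apply \eqref{def:MR} at the pair $(p_n,y_n)$: since $d(p_n,F^{-1}(y_n))\le\sigma\,d_e(y_n,F(p_n))$, I may choose $p_{n+1}\in F^{-1}(y_n)$ with $d(p_n,p_{n+1})\le\tau\,d_e(y_n,F(p_n))$, which means precisely $x'-g(p_n)\in F(p_{n+1})$. The residual $d_e(y_n,F(p_n))$ is then controlled by the inclusion produced at the previous step: for $n=0$ one has $x-g(p_0)\in F(p_0)$, hence $d_e(y_0,F(p_0))\le\|x'-x\|_e$; for $n\ge1$ one has $x'-g(p_{n-1})\in F(p_n)$, hence $d_e(y_n,F(p_n))\le\|g(p_n)-g(p_{n-1})\|_e\le\nu\,d(p_{n-1},p_n)$. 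Combining these yields the contraction estimate $d(p_n,p_{n+1})\le(\tau\nu)^n\tau\|x'-x\|_e$, so $\{p_n\}$ is Cauchy with $\sum_n d(p_n,p_{n+1})\le\frac{\tau}{1-\tau\nu}\|x'-x\|_e\le\sigma'\|x'-x\|_e$. By completeness it converges to a point $p'$ with $d(p,p')\le\sigma'\|x'-x\|_e$; since $g$ is Lipschitz, hence continuous, on the relevant ball, we have $y_n\to x'-g(p')$, and the local closedness \eqref{eq:loc.clo.me.re.def} of $\gph F$ passes the inclusion $(p_{n+1},y_n)\in\gph F$ to the limit, giving $x'-g(p')\in F(p')$, i.e. $p'\in(g+F)^{-1}(x')$.

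The main obstacle, and the step where the three inequalities on $\alpha$ and $\beta$ are consumed, is the bookkeeping that keeps every iterate inside the region where both hypotheses are legitimately available. By induction one checks that the total displacement from $p_0$ is at most $\sigma'\|x'-x\|_e\le 2\sigma'\beta\le\alpha$ (using $\|x'-x\|_e\le 2\beta$ and $2\sigma'\beta\le\alpha$), so that $d(p_n,\bar p)\le\alpha+\alpha\le a$ by $\alpha\le a/2$; this makes \eqref{def:MR} applicable at every step and keeps the iterates in the ball on which $g$ is Lipschitz. Simultaneously the bound $\|y_n-\bar x\|_e\le\|x'-\bar x\|_e+\|g(\bar p)\|_e+\nu\,d(p_n,\bar p)$, together with the constraint $\nu\alpha+2\beta\le b$, confines $y_n$ to $\mathbb{B}_b[\bar x]$, which is exactly what \eqref{def:MR} requires of its second argument. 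Once this invariant is maintained, the estimates above go through verbatim.

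I would expect the delicate part to be the simultaneous maintenance of the two inclusions $p_n\in\mathcal{B}_a[\bar p]$ and $y_n\in\mathbb{B}_b[\bar x]$ alongside the non-attainment issue absorbed by $\tau$; everything else is a geometric series. Since no curvature bound or comparison estimate enters—only completeness of $\mathcal{M}$ for the convergence of the Cauchy sequence and the closedness of the local graph for identifying the limit—the adaptation of the Euclidean statement is indeed routine, which justifies omitting the detailed computation.
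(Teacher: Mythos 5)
Your strategy is the intended one: the paper's omitted proof is exactly this successive-approximation scheme (it is the argument behind Theorem 5G.3 of Dontchev--Rockafellar), with completeness of $\mathcal{M}$ giving convergence of the Cauchy sequence and the local closedness \eqref{eq:loc.clo.me.re.def} identifying the limit; your handling of the non-attained infimum via the auxiliary modulus $\tau$ is also fine. The gap is in the bookkeeping paragraph you yourself flag as delicate, and it is a real gap, not a presentational one. Your displacement estimate yields only $d(p_n,\bar p)\le d(p_n,p_0)+d(p_0,\bar p)\le 2\alpha$, and with that bound neither hypothesis you invoke from the second step onward is actually available: (i) the residual estimate $d_e(y_n,F(p_n))\le\|g(p_n)-g(p_{n-1})\|_e\le \nu\, d(p_{n-1},p_n)$ uses the Lipschitz property of $g$ at the pair $(p_{n-1},p_n)$, which is assumed only on ${\cal B}_{\alpha}[\bar p]$, whereas the iterates are only known to lie in ${\cal B}_{2\alpha}[\bar p]$ (the same objection applies to $\|g(p_n)\|_e\le\|g(\bar p)\|_e+\nu d(p_n,\bar p)$ and to the continuity of $g$ at the limit $p'$); and (ii) the confinement of $y_n$ to $\mathbb{B}_b[\bar x]$, which you claim follows from $\nu\alpha+2\beta\le b$, requires $d(p_n,\bar p)\le\alpha$; with your own bound $d(p_n,\bar p)\le 2\alpha$ you only get $\|y_n-\bar x\|_e\le 2\beta+2\nu\alpha$, which $\nu\alpha+2\beta\le b$ does not dominate. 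Consequently, from the second iterate on, \eqref{def:MR} is applied at pairs $(p_n,y_n)$ not shown to lie in ${\cal B}_a[\bar p]\times\mathbb{B}_b[\bar x]$, and the Lipschitz bound is used where it is not assumed.

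This cannot be repaired by smarter bookkeeping, because with $g$ controlled only on ${\cal B}_{\alpha}[\bar p]$ the conclusion is actually false. Take $\mathcal{M}=\mathbb{R}$ and $F(t)=\{t,\,t-c\}$ with $c>0$; then $d(t,F^{-1}(y))=d_e(y,F(t))$ for all $(t,y)$, so $F$ is metrically regular at $\bar p=0$ for $\bar x=0$ with $\sigma=1$ and arbitrary $a,b$, and $\gph F$ is closed. Choose $\nu=1/2$, $\sigma'=3$, $\alpha=c$, $\beta=c/6$ (all constraints hold with $a=2c$, $b=c$), and let $g\equiv 0$ on $[-c,c]$ and $g\equiv 100c$ elsewhere; both conditions imposed on $g$ hold. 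Then $p=c\in(g+F)^{-1}(0)\cap{\cal B}_{\alpha}[\bar p]$ and $x=0,\ x'=c/6\in\mathbb{B}_\beta[\bar x]$, but $(g+F)^{-1}(c/6)=\{c/6,\;c/6-100c,\;c/6-99c\}$, whose nearest point to $p$ is at distance $5c/6>c/2=\sigma'\|x-x'\|_e$. Your argument (and the lemma) does close up when the Lipschitz condition is assumed on ${\cal B}_{2\alpha}[\bar p]$ and the second constraint is strengthened to $2\nu\alpha+2\beta\le b$; this is what the Euclidean proof really consumes, and it is what the paper's later applications provide, since there the radii can be halved at no cost. As written, however, your invariants cannot be maintained. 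A secondary, fixable point: when $d_e(y_n,F(p_n))=0$ the factor $\tau>\sigma$ buys nothing; in that case you must either conclude $p_n\in F^{-1}(y_n)$ from the local closedness and stop, or insert a summable slack $\varepsilon_n$ absorbed by the strict inequality $\tau/(1-\tau\nu)<\sigma'$.
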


Our first convergence result is a local analysis of the inexact Newton method \eqref{eq:INM} for solving \eqref{eq:mainproblem}. This approach makes assumptions around a solution $\bar{p}$ of \eqref{eq:mainproblem}.

\begin{theorem} \label{teo:main}
	Let  $\bar{p}$ be a solution of  \eqref{eq:mainproblem}. Suppose that the following conditions hold:
	\begin{enumerate}
		\item[(i)]  the function $f \colon {\cal M} \to \mathbb{R}^m$ is continuously differentiable at $\bar{p}$
		 and $\reg(f + F; \bar p| 0)= \sigma$;
		\item[(ii)] the sequence $\{R_k\}$ satisfies
		\begin{equation}\label{eq:errorcondition}
			\underset{\substack{p\to \bar{p}\\p\neq\bar{p}}}{\limsup }  \left\{  \frac{1}{d(p,\bar{p})}  \sup_{k\in \mathbb{N}_0} \sup_{x \in R_k(p)} \|x\|_e \right\} < \frac{1}{\sigma} , \qquad k=0, 1,\ldots.
		\end{equation}
	\end{enumerate}
	Then there exist $\theta \in (0,1)$ and a totally normal ball $\B_r(\bar{p}) \subset {\cal M}$
	 such that for every $p\in \B_r(\bar{p})\backslash \{\bar{p}\}$, every $k\in \mathbb{N}_0$, and every 
	$u_k\in R_k(p)$ there exists $q' \in \B_r(\bar{p})$ satisfying
	\begin{equation}\label{eq:maintheouk}
		f(p) + {\cal D}f(p)[\exp^{-1}_{p} q']+F(q')\ni u_k
	\end{equation}
	and
	\begin{equation}\label{eq:theomainconver}
		d(q',\bar{p})\leq \theta  d(p,\bar{p}).
	\end{equation}
	Consequently, for any starting point $p_0\in\B_{r}(\bar{p})$, there exists a sequence $\{p_k\}$ generated by \eqref{eq:INM} that converges linearly to $\bar{p}$.
\end{theorem}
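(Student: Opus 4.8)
The plan is to reduce a single Newton step at the current iterate to one perturbation of the linearization of $f+F$ at $\bar p$, and then to invoke the perturbation Lemma~\ref{pert.reg} together with the invariance of the regularity modulus under linearization (Theorem~\ref{theo:lyus-graves-coro}). Let $G$ be the map in \eqref{eq:mappauxG}, so that on the normal ball $G(q)=f(\bar p)+{\cal D}f(\bar p)[\exp_{\bar p}^{-1}q]+F(q)$, and recall $\reg(G;\bar p\mid 0)=\reg(f+F;\bar p\mid 0)=\sigma$. For a fixed current point $p$ write the Newton map as $G_p(q):=f(p)+{\cal D}f(p)[\exp_p^{-1}q]+F(q)$ and introduce the single-valued difference
\[
g_p(q):=f(p)-f(\bar p)+{\cal D}f(p)[\exp_p^{-1}q]-{\cal D}f(\bar p)[\exp_{\bar p}^{-1}q],
\]
defined on a totally normal ball around $\bar p$. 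Then $G_p=g_p+G$ there, so that finding $q'$ satisfying \eqref{eq:maintheouk} is precisely finding $q'\in G_p^{-1}(u_k)=(g_p+G)^{-1}(u_k)$.

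Next I would check the two hypotheses of Lemma~\ref{pert.reg} for the perturbation $g_p$, with $G$ in the role of $F$ and $\bar x=0$. For the Lipschitz part, observe that $g_p(q)-g_p(q')={\cal D}f(p)[\exp_p^{-1}q-\exp_p^{-1}q']-{\cal D}f(\bar p)[\exp_{\bar p}^{-1}q-\exp_{\bar p}^{-1}q']$, which is exactly the quantity bounded by \eqref{eq:assump:lipsh.cond.Theolrm} (Lemma~\ref{lem:assumptionproof}); hence, after shrinking to a totally normal ball $\B_{\delta_\epsilon}(\bar p)$, the map $g_p$ is $\nu$-Lipschitz on it with $\nu=\epsilon$ as small as desired, \emph{uniformly} in $p$. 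For the value at $\bar p$, using $\exp_{\bar p}^{-1}\bar p=0_{\bar p}$ gives $g_p(\bar p)=f(p)-f(\bar p)+{\cal D}f(p)[\exp_p^{-1}\bar p]$, which is the first-order Taylor remainder of $f$; via the integral representation (Proposition~\ref{lem:properepsil}) along the geodesic from $p$ to $\bar p$ and continuity of ${\cal D}f$, one gets $\|g_p(\bar p)\|_e=o(d(p,\bar p))$, hence $\le\epsilon_1 d(p,\bar p)$ with $\epsilon_1$ small once $p$ is close to $\bar p$.

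The crux is then a single application of Lemma~\ref{pert.reg} with $\bar p$ itself as starting point. Since $-f(\bar p)\in F(\bar p)$, one verifies $g_p(\bar p)\in G_p(\bar p)$, i.e.\ $\bar p\in G_p^{-1}(g_p(\bar p))\cap{\cal B}_\alpha[\bar p]$. Taking $x=g_p(\bar p)$ and $x'=u_k$ (both in $\mathbb{B}_\beta[0]$ once $r$ is small, using $\|u_k\|_e\le c\,d(p,\bar p)$ with $c<1/\sigma$ from \eqref{eq:errorcondition}), the lemma yields $q'\in G_p^{-1}(u_k)$ with
\[
d(q',\bar p)=d(\bar p,q')\le\sigma'\,\|g_p(\bar p)-u_k\|_e\le\sigma'\bigl(\epsilon_1+c\bigr)\,d(p,\bar p),
\]
which establishes \eqref{eq:maintheouk} and, setting $\theta:=\sigma'(\epsilon_1+c)$, also \eqref{eq:theomainconver}. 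I expect the main obstacle to lie exactly in coordinating the constants: since $c<1/\sigma$ forces $\sigma<1/c$, one first takes $\nu$ small enough that $\sigma/(1-\sigma\nu)<1/c$, then picks $\sigma'\in(\sigma/(1-\sigma\nu),1/c)$ so that $\sigma'c<1$, then $\epsilon_1$ small so that $\theta<1$; finally $\alpha,\beta$ must be chosen from the constraints $\alpha\le a/2$, $\nu\alpha+2\beta\le b$, $2\sigma'\beta\le\alpha$ of Lemma~\ref{pert.reg}, and $r$ shrunk so that $r\le\alpha$, $\epsilon_1 r\le\beta$, $cr\le\beta$, and $\B_r(\bar p)\subset\B_{\delta_\epsilon}(\bar p)$ hold simultaneously on one totally normal ball.

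Finally, the stated consequence follows at once: because $d(q',\bar p)\le\theta d(p,\bar p)<r$, the point $q'$ again lies in $\B_r(\bar p)$. Thus, starting from any $p_0\in\B_r(\bar p)$ and choosing $p_{k+1}:=q'$ at each step (with an arbitrary selection $u_k\in R_k(p_k)$, and stopping if some $p_k=\bar p$), one obtains a sequence generated by \eqref{eq:INM} with $d(p_{k+1},\bar p)\le\theta d(p_k,\bar p)$ for all $k$, which converges linearly to $\bar p$.
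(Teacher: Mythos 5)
Your proposal is correct and takes essentially the same route as the paper's own proof: the same reduction of a Newton step to Lemma~\ref{pert.reg} applied to the linearization $G$ of Theorem~\ref{theo:lyus-graves-coro} perturbed by $g_p$, with the same uniform Lipschitz estimate (Lemma~\ref{lem:assumptionproof}) and Taylor estimate (Proposition~\ref{lem:properepsil}); the only cosmetic difference is that your $g_p$ carries the Taylor remainder $f(p)-f(\bar p)+{\cal D}f(p)[\exp_p^{-1}\bar p]$ (so you apply the lemma with $x=g_p(\bar p)$, $x'=u_k$), whereas the paper's $g_p$ vanishes at $\bar p$ and the remainder is absorbed into the right-hand side $x'=-y_k$. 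One small formal point: since $\reg(G;\bar p|0)=\sigma$ is only an infimum, metric regularity need not hold with the constant $\sigma$ itself, so the argument should be run with some $\tau\in(\sigma,1/c)$ for which it does hold---exactly the role of $\tau$ and $\kappa$ in the paper---after which your coordination of constants goes through unchanged.
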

\begin{proof}
	First, \eqref{eq:errorcondition} implies that there exists $\iota> 0$ satisfying the following inequality:
	\begin{equation}\label{proof:mainteo.1}
		\underset{\substack{p\to \bar{p}\\p\neq\bar{p}}}{\limsup }  \left\{  \frac{1}{d(p,\bar{p})}  \sup_{k\in \mathbb{N}_0} \sup_{x \in R_k(p)} \|x\|_e \right\} < \iota < \frac{1}{\sigma}.
	\end{equation}
	Choose $\iota$ satisfying \eqref{proof:mainteo.1} and positive constants  $\mu>0$, $\kappa > \sigma$, $\epsilon>0$ and $\theta \in (0,1)$ such that
	\begin{equation}\label{eq:parameters}
		\mu \kappa < 1 , \qquad \kappa (\epsilon + \iota ) <  \theta (1 - \mu\kappa).
	\end{equation}
	Pick $\Theta > 0$ and $\tau \in (\sigma,\kappa)$ such that
	\begin{equation}\label{eq.theta.bound}
		\frac{\sigma }{1 - \mu \sigma } < \Theta < \frac{\kappa}{1-\mu\kappa}, \qquad \frac{\tau }{1 - \mu \tau } < \Theta.
	\end{equation}
	From the first inequality in \eqref{proof:mainteo.1}, there exists $\delta>0$ such that
	\begin{equation}\label{eq:inexactineq}
		\|x\|_e < \iota d(p,\bar{p}) \quad \mbox{whenever} \quad
		p\in {\cal B}_{\delta}(\bar{p}) \backslash \{\bar{p}\}, \quad  x\in R_{k}(p), \quad k\in \mathbb{N}_0.
	\end{equation}
	Choose a totally normal ball $\B_{\bar{\delta}}(\bar{p}) \subset {\cal B}_{\delta}(\bar{p})$. Note that the function $g_p\colon {\cal M} \to \mathbb{R}^m$  defined by 
	\begin{equation}\label{def:littlegp}
	g_p(q) =
	\begin{cases}
	 {\cal D}f(p) [\exp_{p}^{-1}q - \exp_{p}^{-1}\bar{p}] -  {\cal D}f(\bar{p}) [ \exp_{\bar{p}}^{-1} q ]  , & q \in \B_{\bar{\delta}}(\bar{p}),
		\\
		\qquad 0, & q\in {\cal M} \backslash \B_{\bar{\delta}}(\bar{p}).
	\end{cases}
\end{equation}
	satisfies $g_p(\bar{p}) = 0$ for all $p \in \B_{\bar{\delta}}(\bar{p})$. On the other hand, since
	$f$ and $F$ satisfy $(i)$, it follows from Theorem~\ref{theo:lyus-graves-coro} that $\reg(G; \bar p| 0)=\sigma$, where
	$G\colon {\cal M} \rightrightarrows \mathbb{R}^m$ is the function defined in \eqref{eq:mappauxG} with $r=\bar{\delta}$. Hence, by the definition of $\reg$ in Definition~\ref{de:majcon}, there exist $a>0$ and $b>0$ such that \eqref{eq:loc.clo.me.re.def} and \eqref{def:MR} are satisfied for $\Phi = G$ and $\sigma=\tau$. Moreover, by  Lemma~\ref{lem:assumptionproof} and \eqref{def:littlegp},  there exists $\alpha < \min\{ \bar{ \delta}, a / 2, b/ \mu \} $ such that 
	\begin{equation*}
		\| 	g_{p} (q) - 	g_{p} (q') \|_e = \|{\cal D}f(p) [\exp_{p}^{-1}q - \exp_{p}^{-1}q'] -  {\cal D}f(\bar{p})[ \exp_{\bar{p}}^{-1}q -  \exp_{\bar{p}}^{-1}q']\|_e  
		\leq \mu d(q,q'),
	\end{equation*}
	for all $p, q,q'\in \B_{\alpha}(\bar{p})$. Therefore, for every $p \in \B_{\alpha}(\bar{p})$ one has
	\begin{equation*}\label{eq:ins.lemp.gb.cl}
		g_p(\bar{p}) = 0, \quad 
		\| 	g_{p} (q) - 	g_{p} (q') \|_e  
		\leq \mu d(q,q'), \quad q,q' \in \B_{\alpha}[\bar{p}].
	\end{equation*}
	Overall, there exists $\beta>0$ (independent of $p$) such that 
	\begin{equation*}
		\alpha \leq a/2, \qquad \mu \alpha + 2 \beta \leq b, \qquad 2 \Theta\beta \leq \alpha,
	\end{equation*}
	and 
	\begin{equation*}\label{eq:ins.lemp.gb.cl.1}
		\|g_p(\bar{p})\|_e \leq \beta, \quad 
		\| 	g_{p} (q) - 	g_{p} (q') \|_e  
		\leq \mu d(q,q'), \quad  p, q, q' \in \B_{\alpha}[\bar{p}].
	\end{equation*}
	Applying Lemma~\ref{pert.reg} with
	$g=g_p$, $F=G$,  $\nu=\mu$, $\sigma' = \Theta$ and $\bar{x}=0$, we conclude that 
	for every $x,x'\in \mathbb{B}_{\beta}[0]$ and every $ q \in(g_p + G)^{-1}(x)\cap \B_{\alpha}[\bar p]$ there exists $q'\in (g_p + G)^{-1}(x')$ such that
	$
	d(q,q')\leq  \Theta  \| x - x' \|_e.
	$
	Taking $x=\bar{x}=0$ and $q=\bar{p}$, we conclude that 
	for every $x' \in \mathbb{B}_{\beta}[0]$ there exists  $q' \in (g_{p} + G)^{-1}(x')$ such that
	\begin{equation}\label{eq.bytheo1}
		d( \bar{p} , q' ) \leq \Theta \|x'\|_e .
	\end{equation}

	Using the second part of Proposition~\ref{lem:properepsil} with  $\epsilon>0$ chosen as in \eqref{eq:parameters}, we have the existence of a normal ball $\B_{\delta_{\epsilon}}(\bar p)$ such that 
	\begin{equation}\label{eq:properepsi}
		\| f(p) - f(\bar{p}) - {\cal D}f(\bar{p})[\exp^{-1}_{\bar p} p]\|_e \leq \epsilon d(p,\bar{p}), \qquad  p \in \B_{\delta_{\epsilon}}(\bar p),
	\end{equation}
	Fixed $r$ such that 
	\begin{equation}\label{eq:defdraioconv}
	0< r < \min\left\{ \frac{\beta}{\epsilon + \iota},\bar{\delta}, \delta_{\epsilon}\right\}
	\end{equation}
	and  $p\in \B_{r}(\bar{p}) \backslash \{\bar{p}\}$, by choosing $k\in \mathbb{N}_0$ and $u_k\in R_k(p)$, it follows from \eqref{eq:inexactineq} that $u_k$ satisfies $	\|u_k\|_e \leq \iota d(p,\bar{p}) $. Denote
	\begin{equation}\label{eq:yk}
		y_k \coloneqq f(p) - f(\bar{p}) + {\cal D}f(p)[\exp^{-1}_{p} \bar{p}] - u_k.
	\end{equation}
	If $y_k=0$, then $q' \coloneqq \bar{p}$ satisfies \eqref{eq:maintheouk} because $-f(\bar{p})\in F(\bar{p})$, and \eqref{eq:theomainconver} holds trivially. For $y_k\neq 0$, it follows from \eqref{eq:yk},  \eqref{eq:properepsi}  and \eqref{eq:inexactineq} that
	\begin{equation}\label{eq:ykbounded}
		\|y_k\|_e \leq \| f(p) - f(\bar{p}) + {\cal D}f(p)[\exp^{-1}_{p} \bar{p}] \|_e + \| u_k\|_e \leq (\epsilon + \iota)d( p ,\bar{p}).
	\end{equation}
	Since $ d(p,\bar{p}) < r < \beta/(\epsilon + \iota)$, it follows from \eqref{eq:ykbounded} that $\|y_k\|_e < \beta$. Applying \eqref{eq.bytheo1} with $x'=-y_k$, we obtain that there exists $q'\in (g_{p} + G)^{-1}(-y_k)$ such that $d(q',\bar{p}) \leq \Theta \|y_k\|_e $. Hence, from the upper bound for $\Theta$ given in \eqref{eq.theta.bound}, \eqref{eq:ykbounded}, and the last inequality in \eqref{eq:parameters}, it follows that 
	\begin{equation*}
		d(q',\bar{p}) < \frac{\kappa}{1-\mu\kappa} \|y_k\|_e <  \frac{\kappa(\epsilon + \iota)   }{1-\mu\kappa} d(p,\bar{p})< \theta d(p,\bar{p}).
	\end{equation*}	
	Furthermore, it comes from $-y_k \in (g_p + G)(q')$,  \eqref{eq:yk}, \eqref{def:littlegp} and \eqref{eq:mappauxG} that
	$$
	-f(p) + f(\bar{p}) - {\cal D}f(p)[\exp^{-1}_{p} \bar{p}] + u_k  \in  f(\bar{p}) + 
	{\cal D}f(p) [ \exp_{p}^{-1} q' - \exp_{p}^{-1}\bar{p} ] + F(q'),
	$$
	which means that $
	u_k  \in   f(p) +
	{\cal D}f(p) [ \exp_{p}^{-1} q']  + F(q').$ Thus, $q'$ satisfies \eqref {eq:maintheouk} and \eqref{eq:theomainconver}.

	Now choose $p_0\in \B_r(\bar{p})$. If $p_k=\bar{p}$, then $p_{k+1}\coloneqq \bar{p}$ verifies \eqref{eq:INM} because $0\in R_{k}(\bar{x})$.  If $p_k\neq \bar{p}$, applying \eqref {eq:maintheouk} and \eqref{eq:theomainconver} with $p=p_0$ and $q' = p_{1}$ we have
	$
	f(p_0)+ {\cal D}f(p_0)[\exp^{-1}_{p_0} p_{1}]  + F(p_{1})\ni u_0
	$
	and 
	\begin{equation*}\label{eq:theomainiterconv}
		d(p_{1},\bar{p})\leq \theta  d(p_0,\bar{p}).
	\end{equation*}
	Repeating this argument one can conclude that there exists a sequence $\{p_k\}$ in  $\B_{r}(\bar{p})$  which satisfies \eqref{eq:INM} and converges linearly to $\bar{p}$.
\end{proof}


An upper bound for the radius $r$ of the ball mentioned in the statement of Theorem~\ref{teo:main} is given in \eqref{eq:defdraioconv}. This bound depends on certain constants that, while not known a priori, are guaranteed to exist due to conditions i) and ii) in Lemma~\ref{pert.reg}, as well as additional results presented in the Appendix of this paper.

Below, we present a version of Theorem~\ref{teo:main} with more technical conditions, where it is assumed that the constants mentioned above are known. Under these new conditions, we can explicitly determine a radius of convergence for the sequence ${p_k}$ generated by \eqref{eq:INM}. The proof of this new theorem follows a similar structure to the proof of Theorem~\ref{teo:main} and will therefore be omitted.
 

\begin{theorem}\label{teo:main.ovc}
Let $\bar{p}$ be a solution to the equation \eqref{eq:mainproblem}. We assume that the following conditions are satisfied:
\begin{enumerate}
\item[(i)] $\B_{\bar{\delta}}(\bar{p})$ is a totally normal ball, and the mapping $G\colon {\cal M} \rightrightarrows \mathbb{R}^m$ defined in \eqref{eq:mappauxG} with $r=\bar{\delta}$ is metrically regular at $\bar{p}$ for $0$ with the constant $\tau>0$ and neighborhoods ${\cal B}_a[\bar{ p}]$ and $\mathbb{B}_b[0]$.
	\item[(ii)]  $\iota\in (0,1/\tau)$  is chosen such that
		$\|x\|_e < \iota d(p,\bar{p})$ holds for all $p\in \B_{\bar{\delta}}(\bar{p}) \backslash \{\bar{p}\}$,  $x\in R_{k}(p)$,  $k\in \mathbb{N}_0$.
	\item[(iii)] $\mu>0$, $\kappa > \tau$, $\epsilon>0$ and  $\Theta > 0$  satisfy
	\begin{equation*}\label{eq:parameters.rem}
		\mu \kappa < 1 , \qquad \kappa (\epsilon + \iota ) < 1 - \mu\kappa, \qquad \frac{\tau }{1 - \mu \tau } < \Theta < \frac{\kappa}{1-\mu\kappa}.
	\end{equation*}
	\item[(iv)] $\alpha < \min\{ \bar{ \delta}, a / 2, b/ \mu \} $ and 
	\begin{equation*}\label{eq:desmant3}
		 \|{\cal D}f(p) [\exp_{p}^{-1}q - \exp_{p}^{-1}q'] -  {\cal D}f(\bar{p})[ \exp_{\bar{p}}^{-1}q -  \exp_{\bar{p}}^{-1}q']\|_e  
		\leq \mu d(q,q') \quad    \mbox{for all }\,     p, q,q'\in \B_{\alpha}(\bar{p}).
	\end{equation*}
	\item[(v)] $\beta>0$ is such that $\mu \alpha + 2 \beta \leq b$ and $2 \Theta\beta \leq \alpha$.
         \item[(vi)] $\delta_{\epsilon}>0$ satisfies 
		$\| f(p) - f(\bar{p}) - {\cal D}f(\bar{p})[\exp^{-1}_{\bar p} p]\|_e \leq \epsilon d(p,\bar{p})$ for all  $p \in \B_{\delta_{\epsilon}}(\bar p)$.
\end{enumerate}
Then, for every starting point $p_0\in \B_{r}(\bar{p})$ with  $0< r < \min\{\beta/(\epsilon + \iota),\bar{\delta}, \delta_{\epsilon}\}$, there exists a sequence $\{p_k\}$ generated by \eqref{eq:INM} that is well defined and converges linearly to $\bar{p}$.
\end{theorem}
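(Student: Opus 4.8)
The plan is to replay the second half of the proof of Theorem~\ref{teo:main}, with the crucial simplification that every constant is now supplied directly by assumptions (i)--(vi) rather than being extracted from abstract regularity and error hypotheses. First I would fix the contraction factor by setting $\theta \coloneqq \kappa(\epsilon+\iota)/(1-\mu\kappa)$, which lies in $(0,1)$ because the second relation in (iii) gives $\kappa(\epsilon+\iota) < 1-\mu\kappa$ while the first gives $\mu\kappa < 1$. Next I would introduce the auxiliary function $g_p$ exactly as in \eqref{def:littlegp}. A direct computation shows $g_p(\bar{p})=0$ for every $p \in \B_{\bar\delta}(\bar{p})$, and assumption (iv) is precisely the statement that $\|g_p(q)-g_p(q')\|_e \le \mu\, d(q,q')$ for all $p,q,q' \in \B_\alpha(\bar{p})$; hence on ${\cal B}_\alpha[\bar{p}]$ the pair $(g_p,G)$, with $G$ the linearization mapping of (i), meets the premises of the perturbation lemma.

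The core step is to invoke Lemma~\ref{pert.reg} with $F=G$, $\nu=\mu$, $\sigma'=\Theta$, $\bar{x}=0$, and regularity constant $\sigma=\tau$ furnished by (i). Its quantitative conditions follow from (iii)--(v): the inequalities $\mu\tau < \mu\kappa < 1$ and $\tau/(1-\mu\tau)<\Theta$ come from $\kappa>\tau$ and the third relation in (iii), while $\alpha \le a/2$, $\mu\alpha+2\beta \le b$ and $2\Theta\beta \le \alpha$ are exactly (iv)--(v); the smallness requirement $\|g_p(\bar{p})\|_e \le \beta$ is trivial since $g_p(\bar{p})=0$. Specializing the conclusion of Lemma~\ref{pert.reg} at $x=0$ and $q=\bar{p}$ --- which is legitimate because $\bar{p}$ solves \eqref{eq:mainproblem}, so $0 \in G(\bar{p})=f(\bar{p})+F(\bar{p})$ and thus $\bar{p} \in (g_p+G)^{-1}(0)$ --- yields the key estimate \eqref{eq.bytheo1}: for every $x' \in \mathbb{B}_\beta[0]$ there is $q' \in (g_p+G)^{-1}(x')$ with $d(\bar{p},q') \le \Theta\|x'\|_e$.

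With this inverse estimate in hand, I would fix $r$ as in the statement and, for arbitrary $p \in \B_r(\bar{p})\backslash\{\bar{p}\}$, $k \in \mathbb{N}_0$ and $u_k \in R_k(p)$, form the residual $y_k$ as in \eqref{eq:yk}. Assumption (ii) bounds $\|u_k\|_e < \iota\, d(p,\bar{p})$, and the Taylor-type estimate (vi) controls the linearization error, so together they give $\|y_k\|_e \le (\epsilon+\iota)\, d(p,\bar{p})$; since $d(p,\bar{p})<r \le \beta/(\epsilon+\iota)$, this places $-y_k$ inside $\mathbb{B}_\beta[0]$. Applying \eqref{eq.bytheo1} with $x'=-y_k$ produces $q' \in (g_p+G)^{-1}(-y_k)$ with $d(q',\bar{p}) \le \Theta\|y_k\|_e < \theta\, d(p,\bar{p})$, the last inequality using $\Theta<\kappa/(1-\mu\kappa)$ from (iii) and the definition of $\theta$. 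Unwinding the memberships in $g_p$ and $G$ just as after \eqref{eq:yk} shows that this $q'$ satisfies the Newton inclusion \eqref{eq:maintheouk}, so \eqref{eq:theomainconver} holds with this $\theta$. Starting from any $p_0 \in \B_r(\bar{p})$ and iterating (the case $p_k=\bar{p}$ being handled exactly as in Theorem~\ref{teo:main}) produces a sequence $\{p_k\}\subset \B_r(\bar{p})$ satisfying \eqref{eq:INM} with $d(p_{k+1},\bar{p}) \le \theta\, d(p_k,\bar{p})$, which is the asserted well-defined, linearly convergent sequence.

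I would expect the only genuinely delicate point to be the bookkeeping that certifies all quantitative premises of Lemma~\ref{pert.reg} from (i)--(v) at once --- in particular tracking that the choice $\sigma'=\Theta$ is compatible both with the lower bound $\tau/(1-\mu\tau)<\Theta$ demanded by the lemma and with the upper bound $\Theta<\kappa/(1-\mu\kappa)$ that drives the contraction --- together with verifying that the residual bound for $y_k$ built from (vi) really delivers the constant $\epsilon$ rather than a first-order correction of it, since (vi) is stated with the base point $\bar{p}$ while $y_k$ involves the linearization at $p$. Everything else is a transcription of the corresponding passages in the proof of Theorem~\ref{teo:main}.
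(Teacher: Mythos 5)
Your overall route is exactly the paper's: the paper omits this proof, saying it repeats the argument of Theorem~\ref{teo:main}, and your handling of Lemma~\ref{pert.reg} (with $F=G$, $\sigma=\tau$, $\nu=\mu$, $\sigma'=\Theta$, the quantitative premises checked from (iii)--(v), and the specialization at $x=0$, $q=\bar{p}$ justified by $0\in G(\bar{p})$), as well as the final induction, are faithful and correct transcriptions of that argument.

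However, the point you flagged at the end and left unresolved is a genuine gap, not bookkeeping; it is precisely where this theorem differs in substance from Theorem~\ref{teo:main}. Assumption (vi) bounds $\|f(p)-f(\bar{p})-{\cal D}f(\bar{p})[\exp^{-1}_{\bar{p}}p]\|_e$, the linearization error \emph{based at} $\bar{p}$, whereas $y_k$ contains $f(p)-f(\bar{p})+{\cal D}f(p)[\exp^{-1}_{p}\bar{p}]$, the linearization error \emph{based at} $p$; these differ by ${\cal D}f(p)[\exp^{-1}_{p}\bar{p}]+{\cal D}f(\bar{p})[\exp^{-1}_{\bar{p}}p]$, which is not zero. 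The only bridge available among (i)--(vi) is (iv) with $q=\bar{p}$, $q'=p$, giving $\|{\cal D}f(p)[\exp^{-1}_{p}\bar{p}]+{\cal D}f(\bar{p})[\exp^{-1}_{\bar{p}}p]\|_e\le\mu\, d(p,\bar{p})$, hence only $\|y_k\|_e\le(\epsilon+\mu+\iota)\,d(p,\bar{p})$; note also that this use of (iv), like the derivation of \eqref{eq.bytheo1} itself, requires $p\in\B_{\alpha}(\bar{p})$, while the stated radius $r<\min\{\beta/(\epsilon+\iota),\bar{\delta},\delta_{\epsilon}\}$ does not force $r\le\alpha$. With the constant $\epsilon+\mu+\iota$, the contraction factor becomes $\Theta(\epsilon+\mu+\iota)$, which (iii) does not force below $1$ (it only yields $\Theta(\epsilon+\iota)<1$), and $r<\beta/(\epsilon+\iota)$ no longer guarantees $-y_k\in\mathbb{B}_{\beta}[0]$. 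In fairness, the identical jump occurs in the paper's own proof of Theorem~\ref{teo:main} at \eqref{eq:ykbounded}, where it is harmless because every constant there is chosen existentially; here the constants are fixed by the hypotheses, so the write-up must be repaired --- either by reading (vi) as the two-point estimate $\|f(q')-f(q)-{\cal D}f(q)[\exp^{-1}_{q}q']\|_e\le\epsilon\, d(q,q')$ for $q,q'\in\B_{\delta_{\epsilon}}(\bar{p})$ (which continuous differentiability does provide, by the argument of Lemma~\ref{lem:assumptionproof}, but which is not what (vi) literally says), or by carrying $\epsilon+\mu+\iota$ throughout and correspondingly tightening (iii), the bound defining $r$, and adding the requirement $r\le\alpha$.
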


Next, we modify the assumption on the multifunction $R_k$ and introduce a stronger condition for the differentiability of the function $f$ in order to establish quadratic convergence of the sequence \eqref{eq:INM}.
\begin{theorem}\label{teo:main2}
	Let  $\bar{p}$ be a solution of  \eqref{eq:mainproblem}. Suppose the following conditions hold:
	\begin{enumerate}
		\item[(i)] the function $f \colon {\cal M} \to \mathbb{R}^m$ is continuously differentiable at $\bar{p}$, and $\reg(f + F; \bar p| 0)= \sigma$;
		\item[(ii)] ${\cal D} f$ is
		L-Lipschitz continuous around $\bar{p}$;
		\item[(iii)]  the sequence $\{R_k\}$ satisfies
		\begin{equation}\label{eq:errorcondition2}
			\underset{\substack{p\to \bar{p}\\p\neq\bar{p}}}{\limsup }  \left\{  \frac{1}{d^2(p,\bar{p})}  \sup_{k\in \mathbb{N}_0} \sup_{x \in R_k(p)} \|x\|_e \right\} < \frac{1}{\sigma}, \qquad k=0, 1,\ldots.
		\end{equation}
	\end{enumerate}
	Then there exist $\theta>0$ and a totally normal ball $\B_r(\bar{p}) \subset {\cal M}$
 such that for every $p\in \B_r(\bar{p})\backslash \{\bar{p}\}$, every $k\in \mathbb{N}_0$, and every 
	$u_k\in R_k(p)$, there exists $q'\in \B_r(\bar{p})$ satisfying
	\begin{equation}\label{eq:maintheouk2}
		f(p)+{\cal D}f(p)[\exp^{-1}_{p} q'] +F(q')\ni u_k
	\end{equation}
	and
	\begin{equation}\label{eq:theomainconver2}
		d(q',\bar{p})\leq \theta  d^2(p,\bar{p}).
	\end{equation}
	Consequently, for any starting point $p_0\in \B_{r}(\bar{p})$, there exists a sequence $\{p_k\}$ generated by \eqref{eq:INM} that converges quadratically to $\bar{p}$.
\end{theorem}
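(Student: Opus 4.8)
The plan is to reproduce the proof of Theorem~\ref{teo:main} line by line, changing only the estimate of the Newton residual. All of the metric-regularity machinery there is blind to whether we are after a linear or a quadratic rate: the selection of parameters, the transfer of $\reg(f+F;\bar p|0)=\sigma$ to the linearized map $G$ of \eqref{eq:mappauxG} via Theorem~\ref{theo:lyus-graves-coro}, the Lipschitz control of the auxiliary map $g_p$ of \eqref{def:littlegp}, and the application of the perturbation Lemma~\ref{pert.reg} all carry over verbatim. The single genuinely new ingredient is the upgrade of the first-order Taylor estimate to a second-order one, and this is exactly where hypothesis (ii) enters.

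Concretely, from (iii) I would first extract $\iota$ with $\limsup\{\cdots\}<\iota<1/\sigma$, so that $\|x\|_e<\iota\, d^2(p,\bar p)$ for $x\in R_k(p)$ and $p$ near $\bar p$. I then pick $\mu,\kappa,\tau,\Theta$ exactly as in Theorem~\ref{teo:main} (only the per-step constraint $\theta<1$ is dropped, since quadratic convergence does not require it), form $g_p$, verify $g_p(\bar p)=0$ together with its $\mu$-Lipschitz bound through Lemma~\ref{lem:assumptionproof}, and apply Lemma~\ref{pert.reg} with $F=G$, $\nu=\mu$, $\sigma'=\Theta$. This yields the key estimate: for every $x'\in\mathbb{B}_\beta[0]$ there is $q'\in(g_p+G)^{-1}(x')$ with $d(\bar p,q')\le\Theta\|x'\|_e$.

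The crux is the quadratic remainder bound. Setting $y_k:=f(p)-f(\bar p)+{\cal D}f(p)[\exp^{-1}_p\bar p]-u_k$, the same algebra as in Theorem~\ref{teo:main} shows that \eqref{eq:maintheouk2} is equivalent to $q'\in(g_p+G)^{-1}(-y_k)$. Let $\gamma\colon[0,1]\to{\cal M}$ be the geodesic with $\gamma(0)=p$, $\gamma(1)=\bar p$, so that $\dot\gamma(0)=\exp^{-1}_p\bar p$ and $\|\dot\gamma(0)\|_p=d(p,\bar p)$. Writing $f(\bar p)-f(p)=\int_0^1{\cal D}f(\gamma(t))[\dot\gamma(t)]\,\dt$ (Proposition~\ref{lem:properepsil}) and using that $\dot\gamma$ is parallel, i.e. $\dot\gamma(t)=P_{\gamma,0,t}\dot\gamma(0)$, I obtain
\[
f(p)-f(\bar p)+{\cal D}f(p)[\exp^{-1}_p\bar p]=\int_0^1\bigl({\cal D}f(p)-{\cal D}f(\gamma(t))P_{\gamma,0,t}\bigr)[\dot\gamma(0)]\,\dt.
\]
Applying Definition~\ref{Def:DLips} to the geodesic from $p$ to $\gamma(t)$, whose initial velocity $t\dot\gamma(0)$ has norm $t\,d(p,\bar p)$, gives $\|{\cal D}f(\gamma(t))P_{\gamma,0,t}-{\cal D}f(p)\|_{map}\le Lt\,d(p,\bar p)$, whence integrating the displayed identity yields $\|f(p)-f(\bar p)+{\cal D}f(p)[\exp^{-1}_p\bar p]\|_e\le\tfrac{L}{2}\,d^2(p,\bar p)$.

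Combining this with $\|u_k\|_e<\iota\,d^2(p,\bar p)$ gives $\|y_k\|_e\le(\tfrac{L}{2}+\iota)\,d^2(p,\bar p)$; choosing $r$ small enough that $\|y_k\|_e<\beta$ and applying the estimate of the second paragraph with $x'=-y_k$ produces $q'\in\B_r(\bar p)$ with $d(q',\bar p)\le\Theta(\tfrac{L}{2}+\iota)\,d^2(p,\bar p)=:\theta\,d^2(p,\bar p)$, establishing \eqref{eq:maintheouk2}--\eqref{eq:theomainconver2}. Shrinking $r$ further so that $\theta r<1$ guarantees that the generated iterates remain in $\B_r(\bar p)$ and converge quadratically, exactly as in the closing argument of Theorem~\ref{teo:main}. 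I expect the only real obstacle to be the careful derivation of the quadratic remainder in the curved setting, namely justifying the parallel-transport identity $\dot\gamma(t)=P_{\gamma,0,t}\dot\gamma(0)$ and the reparametrization needed so that Definition~\ref{Def:DLips} applies to the sub-geodesics from $p$ to $\gamma(t)$, since everything else is a transcription of the linear-rate proof.
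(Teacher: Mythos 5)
Your proposal is correct and follows essentially the same path as the paper's proof: the same transfer of metric regularity to the linearized map $G$ via Theorem~\ref{theo:lyus-graves-coro}, the same auxiliary maps $g_p$, the same application of Lemma~\ref{pert.reg} with $\nu=\mu$, $\sigma'=\Theta$, the same residual $y_k$, and the same concluding iteration argument. The only divergence is in the second-order remainder estimate: the paper invokes the last part of Proposition~\ref{lem:properepsil}, which bounds $\|f(p)-f(\bar p)-\mathcal{D}f(\bar p)[\exp^{-1}_{\bar p}p]\|_e$ (linearization at $\bar p$), whereas you bound directly $\|f(p)-f(\bar p)+\mathcal{D}f(p)[\exp^{-1}_{p}\bar p]\|_e\le\tfrac{L}{2}d^2(p,\bar p)$, the quantity actually appearing in $y_k$ --- this is in fact the slightly more careful route, since the two expressions differ by $(\mathcal{D}f(p)P_{\gamma,0,1}-\mathcal{D}f(\bar p))[\exp^{-1}_{\bar p}p]$, a term that is itself $O(d^2(p,\bar p))$ by Definition~\ref{Def:DLips}, a reconciliation the paper leaves implicit.
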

\begin{proof}
	This proof is analogous to the proof of Theorem~\ref{teo:main}. Using \eqref{eq:errorcondition2}, we can find $\iota<1/\sigma $
	and  a totally normal ball $\B_{\delta}(\bar p)$ such that
	\begin{equation}\label{eq:inexactineq2}
		\|x\|_e < \iota d^2(p,\bar{p}) \quad \mbox{whenever} \quad
		p\in \B_{\delta}(\bar{p}) \backslash \{\bar{p}\},\quad k\in \mathbb{N}_0, \quad  x\in R_{k}(p).
	\end{equation}
         Choose  $\mu>0$, $\kappa > \sigma$, $\Theta >0$ and $\tau \in (\sigma,\kappa)$ satisfying
	\begin{equation}\label{eq:parameters2}
		\mu \kappa < 1, \qquad \frac{\sigma }{1 - \mu \sigma } < \Theta < \frac{\kappa}{1-\mu\kappa}, \qquad \frac{\tau }{1 - \mu \tau } < \Theta.
	\end{equation}
	Note that $\mu \tau < 1$. From $(i)$ and Theorem~\ref{theo:lyus-graves-coro}, we obtain $\reg(G; \bar p| 0)=\sigma$, where
	$G\colon \mathcal{M} \rightrightarrows \mathbb{R}^m$ is the function defined in \eqref{eq:mappauxG} with $r=\delta$. Thus, there exist $a > 0$ and $b > 0$ such that $G$ is metrically regular at $\bar{p}$ for $0$
with the constant $\tau$ and neighborhoods ${\cal B}_a[\bar{ p}]$ and $\mathbb{B}_b[0]$. Now, consider the auxiliary functions $g_p\colon {\cal M} \to \mathbb{R}^m$, $p \in \B_{\delta}(\bar{p})$, defined in \eqref{def:littlegp}. Recall that $g_p(\bar{p})=0$ for all $p \in \B_{\delta}(\bar{p})$.
         Make $\delta > 0$ smaller, if necessary, to ensure
	\begin{equation*}
		\| 	g_{p} (q) - 	g_{p} (q') \|_e 
		\leq \mu d(q,q'), \qquad p, q,q'\in \B_{\delta}[\bar{p}].
	\end{equation*}
	Hence, we can apply Lemma~\ref{pert.reg} with $g=g_p$, $F=G$, $\sigma = \tau$, $\nu=\mu$, $\sigma'=\Theta$, $x=\bar{x}=0$, and $p=\bar{p}$. This yields the existence of $\beta > 0$ (independent of the point $p\in \mathcal{B}_{\delta}(\bar{p})$ that determines the function $g_p$) such that for each $x' \in \mathbb{B}_{\beta}[0]$, there exists $q' \in (g_{p} + G)^{-1}(x')$ such that
	\begin{equation}\label{eq.bytheo12}
		d( q' ,\bar{p}) \leq \Theta \|x'\|_e.
	\end{equation}
	With $(ii)$, the last part of Proposition~\ref{lem:properepsil} implies that there exists $\delta_L > 0$ such that
	\begin{equation}\label{eq:lemcin-llips.tcq}
		\| f(p) - f(\bar{p}) - {\cal D}f(\bar{p}) [\exp^{-1}_{\bar p} p] \|_e \leq L d^2(p,\bar{p}), 
		\qquad  p \in \B_{\delta_L}(\bar p).
	\end{equation}
	Fixed $r$ such that 
	\begin{equation}\label{eq:rdcqc}
	0< r < \min\left\{\left(\frac{\beta}{L + \iota}\right)^{1/2}, \frac{1-\mu \kappa}{\kappa(L+\iota)}, \;\delta, \;\delta_L\right\}
	\end{equation}
	and $p\in \B_{r}(\bar{p}) \backslash \{\bar{p}\}$ fixed, by choosing $k\in \mathbb{N}_0$ and $u_k\in R_k(p)$ it follows from \eqref{eq:inexactineq2} that $u_k$ satisfies $	\|u_k\|_e \leq \iota d^2(p,\bar{p}) $. Denote
	\begin{equation}\label{eq:yk2}
		y_k \coloneqq f(p) - f(\bar{p}) + {\cal D}f(p)[\exp^{-1}_{p} \bar{p}] - u_k.
	\end{equation}
	If $y_k=0$, then $q' \coloneqq \bar{p}$ satisfies \eqref{eq:maintheouk2} because $-f(\bar{p})\in F(\bar{p})$ and \eqref{eq:theomainconver2} hold trivially. Assume that $y_k\neq 0$. Using \eqref{eq:lemcin-llips.tcq} and \eqref{eq:yk2}, we obtain
	\begin{equation}\label{eq:ykbounded2}
		\|y_k\|_e \leq \| f(p) - f(\bar{p}) + {\cal D}f(p)[\exp^{-1}_{p} \bar{p}] \|_e + \| u_k\|_e \leq (L + \iota)d^2( p ,\bar{p}).
	\end{equation}
	Since $ d(p,\bar{p}) < r < \left(\beta/(L + \iota)\right)^{1/2}$, it follows from \eqref{eq:ykbounded2} that $\|y_k\|_e < \beta$. Applying \eqref{eq.bytheo12} with $x'=-y_k$, we obtain that there exists $q'\in (g_{p} + G)^{-1}(-y_k)$ such that $d(q',\bar{p}) \leq \Theta \|y_k\|_e $. Hence, utilizing the upper bound for $\Theta$ given in \eqref{eq:parameters2} and \eqref{eq:ykbounded2}, it follows that 
	\begin{equation*}
		d(q',\bar{p}) \leq \frac{\kappa}{1-\mu\kappa} \|y_k\|_e \leq  \theta d^2(p,\bar{p}) \quad \mbox{where} \quad \theta:=\frac{\kappa(L + \iota)   }{1-\mu\kappa}.
	\end{equation*}
	Furthermore, it comes from $-y_k \in (g_p + G)(q')$,  \eqref{eq:yk2}, \eqref{def:littlegp} and \eqref{eq:mappauxG} that
	$$
	-f(p) + f(\bar{p}) - {\cal D}f(p)[\exp^{-1}_{p} \bar{p}] + u_k  \in  f(\bar{p}) + 
	{\cal D}f(p) [  \exp_{p}^{-1} q' - \exp_{p}^{-1}\bar{p} ] + F(q'),
	$$
which means that $
	u_k  \in   f(p) +
	{\cal D}f(p) [ \exp_{p}^{-1} q']  + F(q').$ Thus, $q'$ satisfies \eqref {eq:maintheouk2} and \eqref{eq:theomainconver2}.
	
	To finish the proof, choose any  $p_0\in \B_{r}(\bar{p})$. If $p_0 = \bar{p}$, then $p_{1} \coloneqq \bar{p}$ verifies \eqref{eq:INM} because $0\in R_0(\bar{p})$. If $p_0\neq \bar{p}$, applying \eqref {eq:maintheouk2} and \eqref{eq:theomainconver2} we obtain that for every $u_0\in R_0(p_0)$ there exists $p_{1}$  such that
	\begin{equation*}\label{eq:maintheoiter2}
		f(p_0)+{\cal D}f(p_0)[\exp^{-1}_{p_0} p_{1}] + F(p_{1})\ni u_0
	\quad \mbox{and} \quad
		d(p_{1},\bar{p})\leq \theta  d^2(p_0,\bar{p}).
	\end{equation*}
	By considering the definition of $r$ and $\theta,$ we obtain from the above inequality that $p_1\in \B_{r}(\bar{p}).$  Repeating the previous argument it is possible to construct a sequence $\{p_k\}$ in $\B_{r}(\bar{p})$ that satisfies \eqref{eq:INM} and converges quadratically to $\bar{p}$.
\end{proof}

Under suitable conditions, it is possible to determine the radius $r$ mentioned in Theorem~\ref{teo:main2}. Details are given in the following result. The proof of this result is along the same lines as the proof of Theorem~\ref{teo:main2} and will therefore be omitted.

\begin{theorem}\label{teo:main.ovcq}
Let  $\bar{p}$ be a solution of  \eqref{eq:mainproblem}. Suppose that the following conditions hold:
\begin{enumerate}
\item[(i)]  $\B_{\delta}(\bar{p})$ is a totally normal ball, and the mapping $G\colon {\cal M} \rightrightarrows \mathbb{R}^m$ defined in \eqref{eq:mappauxG} with $r=\delta$ is metrically regular at $\bar{p}$ for $0$ with the constant $\tau>0$ and neighborhoods ${\cal B}_a[\bar{ p}]$ and $\mathbb{B}_b[0]$.
	\item[(ii)]  $\iota\in (0,1/\tau)$  satisfies
		$\|x\|_e < \iota d^2(p,\bar{p})$ for all $p\in \B_{\delta}(\bar{p}) \backslash \{\bar{p}\}$,  $x\in R_{k}(p)$,  $k\in \mathbb{N}_0$.
	\item[(iii)] $\mu>0$, $\kappa > \tau$ and  $\Theta > 0$  satisfy
	\begin{equation*}\label{eq:parameters.rem}
		\mu \kappa < 1 ,  \qquad \frac{\tau }{1 - \mu \tau } < \Theta < \frac{\kappa}{1-\mu\kappa}.
	\end{equation*}
	\item[(iv)] $\alpha < \min\{  \delta, a / 2, b/ \mu \} $ and 
	\begin{equation*}\label{eq:desmant3}
		 \|{\cal D}f(p) [\exp_{p}^{-1}q - \exp_{p}^{-1}q'] -  {\cal D}f(\bar{p})[ \exp_{\bar{p}}^{-1}q -  \exp_{\bar{p}}^{-1}q']\|_e  
		\leq \mu d(q,q') \quad    \mbox{for all }\,     p, q,q'\in \B_{\alpha}(\bar{p}).
	\end{equation*}
	\item[(v)] $\beta>0$ satisfies $\mu \alpha + 2 \beta \leq b$ and $2 \Theta\beta \leq \alpha$.
         \item[(vi)] $L>0$ and $\delta_L>0$ satisfy
		$\| f(p) - f(\bar{p}) - {\cal D}f(\bar{p})[\exp^{-1}_{\bar p} p]\|_e \leq L d^2(p,\bar{p})$ for all  $p \in \B_{\delta_{L}}(\bar p)$.
\end{enumerate}
Then for every  $r$ satisfying \eqref{eq:rdcqc} and starting point $p_0\in \B_{r}(\bar{p})$ there exists a sequence $\{p_k\}$ generated by \eqref{eq:INM} that is well defined and converges linearly to $\bar{p}$.
\end{theorem}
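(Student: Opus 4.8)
The plan is to follow the proof of Theorem~\ref{teo:main2} verbatim, observing that hypotheses (i)--(vi) furnish \emph{as data} precisely those constants that were \emph{produced} along the way in that earlier argument; consequently all the existence and $\limsup$ steps disappear and only the verification of the hypotheses of Lemma~\ref{pert.reg} together with the final induction remain. Concretely, (i) replaces the appeal to Theorem~\ref{theo:lyus-graves-coro} by giving metric regularity of $G$ at $\bar p$ for $0$ with modulus $\tau$ and neighborhoods $\B_a[\bar p]$, $\mathbb{B}_b[0]$; (ii) replaces the $\limsup$ estimate on $\{R_k\}$ by the ready-made bound $\|x\|_e < \iota\, d^2(p,\bar p)$; (iii) fixes the admissible parameters; (iv) supplies the Lipschitz estimate for the auxiliary maps; (v) fixes $\beta$; and (vi) supplies the second-order estimate on $f$.

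First I would introduce, for each $p\in\B_\alpha(\bar p)$, the function $g_p$ exactly as in \eqref{def:littlegp}. By construction $g_p(\bar p)=0$, and hypothesis (iv) gives $\|g_p(q)-g_p(q')\|_e\le\mu\,d(q,q')$ for all $q,q'\in\B_\alpha[\bar p]$. Since (iii) gives $\mu\tau<1$ and $\tau/(1-\mu\tau)<\Theta$, and (v) gives $\mu\alpha+2\beta\le b$ and $2\Theta\beta\le\alpha$ while (iv) gives $\alpha<a/2$, all the arithmetic hypotheses of Lemma~\ref{pert.reg} hold with $F=G$, $\sigma=\tau$, $\nu=\mu$ and $\sigma'=\Theta$. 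Applying that lemma at $x=\bar x=0$, $p=\bar p$ then yields, for each $x'\in\mathbb{B}_\beta[0]$, a point $q'\in(g_p+G)^{-1}(x')$ with $d(q',\bar p)\le\Theta\|x'\|_e$; this is the key estimate \eqref{eq.bytheo12}, now obtained with explicit constants rather than via existence.

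Next I would fix $r$ as in \eqref{eq:rdcqc} and carry out the single-step analysis. For $p\in\B_r(\bar p)\backslash\{\bar p\}$, any $k\in\mathbb{N}_0$ and any $u_k\in R_k(p)$, hypothesis (ii) gives $\|u_k\|_e<\iota\, d^2(p,\bar p)$; setting $y_k$ as in \eqref{eq:yk2} and combining the second-order bound (vi) with the triangle inequality yields $\|y_k\|_e\le(L+\iota)\,d^2(p,\bar p)$. The first term in \eqref{eq:rdcqc} forces $d^2(p,\bar p)<\beta/(L+\iota)$, hence $\|y_k\|_e<\beta$, so \eqref{eq.bytheo12} applies with $x'=-y_k$ and produces $q'\in(g_p+G)^{-1}(-y_k)$ with $d(q',\bar p)\le\Theta\|y_k\|_e\le\theta\,d^2(p,\bar p)$, where $\theta:=\kappa(L+\iota)/(1-\mu\kappa)$. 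Unwinding the inclusion $-y_k\in(g_p+G)(q')$ through \eqref{eq:yk2}, \eqref{def:littlegp} and \eqref{eq:mappauxG} gives $u_k\in f(p)+{\cal D}f(p)[\exp^{-1}_p q']+F(q')$, so $q'$ satisfies \eqref{eq:maintheouk2} and \eqref{eq:theomainconver2}.

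Finally, for any starting point $p_0\in\B_r(\bar p)$ I would build $\{p_k\}$ inductively: if $p_k=\bar p$ set $p_{k+1}:=\bar p$ (legitimate since $0\in R_k(\bar p)$), otherwise take $p_{k+1}:=q'$ from the single-step analysis, so that \eqref{eq:INM} holds and $d(p_{k+1},\bar p)\le\theta\,d^2(p_k,\bar p)$. The only genuinely non-clerical point, and the one I expect to be the main obstacle, is the invariance $p_{k+1}\in\B_r(\bar p)$: here the second term $1/\theta=(1-\mu\kappa)/(\kappa(L+\iota))$ in \eqref{eq:rdcqc} is decisive, since $d(p_{k+1},\bar p)\le\theta\,d(p_k,\bar p)\cdot d(p_k,\bar p)<\theta r\cdot d(p_k,\bar p)<d(p_k,\bar p)<r$. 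This keeps the entire orbit inside $\B_r(\bar p)$ and, because each step contracts the distance quadratically, yields quadratic convergence of $\{p_k\}$ to $\bar p$.
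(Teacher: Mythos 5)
Your proposal is correct and is essentially the paper's own proof: the paper omits the argument precisely because it ``follows the same lines as the proof of Theorem~\ref{teo:main2},'' and you reconstruct exactly that, substituting the explicit data of hypotheses (i)--(vi) for the constants produced by Theorem~\ref{theo:lyus-graves-coro}, Lemma~\ref{lem:assumptionproof} and Proposition~\ref{lem:properepsil}, then applying Lemma~\ref{pert.reg} and running the same single-step estimate and induction. Your closing observation that $r<1/\theta$ (the second term in \eqref{eq:rdcqc}) keeps the iterates in $\B_r(\bar p)$ is also the same device the paper uses at the end of the proof of Theorem~\ref{teo:main2}, and it in fact yields quadratic (hence in particular linear) convergence.
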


Some comments about the previous results are in order. First, Theorems \ref{teo:main} and \ref{teo:main2} establish conditions for ensuring that the inexact Newton method \eqref{eq:INM} converges with linear and quadratic rates, respectively. Second, a similar result to Theorem \ref{teo:main} is presented in \cite[Theorem 2.1]{CibulkaDontchev2015} by considering ${\cal M}$ as Banach spaces. Thus, if ${\cal M}=X$, where $X$ is a Banach space, and the derivative ${\cal D}f(\cdot)$ is replaced by a suitable approximation, then Theorems \ref{teo:main} and \cite[Theorem 2.1]{CibulkaDontchev2015} are equivalent. Third, to ensure superlinear and quadratic convergence, the authors in \cite{CibulkaDontchev2015} assume a stronger condition, namely, the strong metric regularity (see \cite[Theorem 2.3]{CibulkaDontchev2015}). We recall that a set-valued mapping $F$ is strongly metrically regular at $\bar x$ for $\bar y$ if and only if its inverse $F^{-1}$ has a single-valued graphical localization around $\bar{y}$ for $\bar{x}$ which is Lipschitz continuous around $\bar{y}$ with Lipschitz modulus at $\bar{y}$ equal to $\operatorname{reg}(F; \bar{x}|\bar{y})$, see \cite[p. 1007]{CibulkaDontchev2015}.
Fourth, we establish quadratic convergence of \eqref{eq:INM} by only assuming the metric regularity condition, which is clearly a weaker assumption than strong metric regularity, see the previous comment. Thus, the result obtained in Theorem \ref{teo:main2} is stronger than \cite[Theorem 2.3]{CibulkaDontchev2015}. Finally, Theorems \ref{teo:main.ovc} and \ref{teo:main.ovcq} refine Theorems \ref{teo:main} and \ref{teo:main2}, respectively, in the sense that they provide guidance on how to find the neighborhood to start the proposed method in \eqref{eq:INM}.

It is also important to mention that in \cite{CibulkaDontchev2015} is introduced a version of the Dennis-Mor\'e theorem for the sequence generated by \eqref{eq:INM}. We do not go futher in this topic because we do not proposed a quasi-Newton method in this paper.

We conclude this section by presenting a semi-local convergence result for the inexact Newton method proposed in this paper to solve \eqref{eq:mainproblem}. This result makes no assumptions about the unknown solution to the problem under investigation; instead, the assumptions are made about the starting point $p_0$. It is worth mentioning that this result is novel, even in the Euclidean context.

\begin{theorem} \label{teo:semilocal}
Assume that for $(p_0,y_0)\in \Omega \times\mathbb{R}^m$ with $y_0\in f(p_0)+F(p_0)$ and $u_0\in R_0(p_0)$, the following conditions hold:
	\begin{enumerate}
		\item [(i)] $\B_{\delta}(p_0) \subset \Omega$ is a totally normal ball, and the mapping $G\colon {\cal M} \rightrightarrows \mathbb{R}^m$ defined by
		\begin{equation}\label{eq:mappauxG-sl}
	G(p) =
	\begin{cases}
	 f(p_0)+{\cal D}f(p_0)[\exp^{-1}_{p_0} p] + F(p), & p \in \B_{\delta}(p_0)
		\\
		f(p) + F(p), & p\in {\cal M} \backslash \B_{\delta}(p_0)
	\end{cases}
\end{equation} 
		 is metrically regular at $p_0$ for $y_0$ with the constant $\sigma>0$ and neighborhoods ${\cal B}_a[p_0]$ and $\mathbb{B}_b[y_0]$. 
		 \item[(ii)] the positive constants $\mu$, $\alpha$, $\beta$, $\Theta$, $\epsilon$ and $\iota$ satisfy 
		  $$
		\mu \sigma < 1, \qquad  \alpha \leq a/2, \qquad   \mu \alpha + 2 \beta \leq b,
		 $$
		 $$
		   \sigma /(1 - \mu \sigma ) < \Theta \leq \alpha/(2\beta),  \qquad  \epsilon + \iota  <   2\beta/\alpha.
		 $$
		  \item[(iii)] 
		   $\|u_0\|_{e}\le \iota\|y_0\|_e$ and    for all  
		  $p\in \B_{\delta}(p_0)$ there holds
		\begin{equation}\label{eq:y0.bound.s-l}
		 \|y_0\|_e \leq \min\left\{ \frac{\beta}{\Theta(1+\iota)}, \frac{\beta}{1+\iota},   \frac{\beta( 1 - \hat{\alpha} )}{\hat{\alpha} - \hat{\alpha}^2 + 1 },   \frac{\delta( 1 - \hat{\alpha} )}{\Theta(1+\iota)}  \right\}, \qquad \hat{\alpha} \coloneqq  \Theta ( \epsilon + \iota).
		 \end{equation}
		\item[(iv)]  
		$\|x\|_e + \|y\|_e \leq\iota d(\tilde p,\tilde q)$ holds for all $\tilde p,\tilde q\in \B_{\delta}(p_0)$,  $x\in R_{k}(\tilde p)$, $y \in R_{k-1}(\tilde q)$, $k\in \mathbb{N}$ and $\tilde p\neq \tilde q$.	
	      \item[(v)]$f \colon {\cal M} \to \mathbb{R}^m$ is continuously differentiable,  and the inequalities
		$$
		\| f(q) - f(p) - {\cal D}f( p)[\exp^{-1}_{p}  q]\|_e \leq \epsilon d( p, q)
		$$
		and
	          \begin{equation*}\label{eq:desmant3}
		 \|{\cal D}f( p) [\exp_{ p}^{-1} q - \exp_{ p}^{-1} q'] -  {\cal D}f(p_0)[ \exp_{p_0 }^{-1} q -  \exp_{p_0}^{-1} q']\|_e  
		\leq \mu d(q, q')
	         \end{equation*}	
	         hold for all $p, q, q'\in \B_{\delta}(p_0) $.
	\end{enumerate}
	Then there exist   sequences $\{p_{k}\}_{k\in \mathbb{N}}$ and $\{u_{k}\}_{k\in \mathbb{N} }$ satisfying:
	\begin{itemize}
		\item[(A1)] $d(p_k, p_0)\leq \frac{1-\hat{\alpha}^k}{1-\hat{\alpha}}\Theta(1+\iota) \|y_0 \|_e.$
		\item[(A2)] $d(p_k, p_{k-1})\leq \hat{\alpha}^{k-1} \Theta(1+\iota) \|y_0 \|_e.$
		\item[(A3)] $u_{k-1}\in f(p_{k-1}) + {\cal D}f(p_{k-1})[\exp^{-1}_{p_{k-1}} p_{k}]+F(p_{k}).$ 
	\end{itemize}
	In particular, $\{p_{k}\}_{ k\in \mathbb{N} }$ remains in $ \B_{\delta}(p_0)$, converges to a solution $\bar p $ of \eqref{eq:mainproblem}, and satisfies
	\begin{equation*}
	d(p_k, \bar{p}) \leq \frac{ \hat{\alpha}^k }{1-\hat{\alpha}}	\Theta(1+\iota)\|y_0\|_e \,\, \mbox{ for all } \,\, k\in \mathbb{N}.
\end{equation*}

\end{theorem}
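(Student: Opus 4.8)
The plan is to mirror the strategy of the proof of Theorem~\ref{teo:main}, but anchored at the starting point $p_0$ rather than at an unknown solution, and to run an induction that produces the whole sequence together with the geometric decay in (A2). The algebraic backbone is a reformulation of the Newton inclusion \eqref{eq:INM} in terms of the linearized mapping $G$ from \eqref{eq:mappauxG-sl}. Concretely, for each $p_k\in\B_{\delta}(p_0)$ I would introduce the auxiliary function
\[
g_{p_k}(q)={\cal D}f(p_k)[\exp^{-1}_{p_k}q-\exp^{-1}_{p_k}p_0]-{\cal D}f(p_0)[\exp^{-1}_{p_0}q],\qquad q\in\B_{\delta}(p_0),
\]
extended by $0$ outside the ball, so that $g_{p_k}(p_0)=0$ and, by the second inequality in (v), $g_{p_k}$ is $\mu$-Lipschitz on $\B_{\delta}(p_0)$. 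A direct computation then identifies $f(p_k)+{\cal D}f(p_k)[\exp^{-1}_{p_k}q]+F(q)$ with $(g_{p_k}+G)(q)$ up to the constant $f(p_k)+{\cal D}f(p_k)[\exp^{-1}_{p_k}p_0]-f(p_0)$; hence the $k$-th Newton step $u_k\in f(p_k)+{\cal D}f(p_k)[\exp^{-1}_{p_k}p_{k+1}]+F(p_{k+1})$ is equivalent to the inclusion $-y_k\in(g_{p_k}+G)(p_{k+1})$, where $y_k:=f(p_k)+{\cal D}f(p_k)[\exp^{-1}_{p_k}p_0]-f(p_0)-u_k$. This is the device that lets me invoke the perturbation result Lemma~\ref{pert.reg} with the fixed metrically regular mapping $G$ and the varying, but uniformly Lipschitz, perturbations $g_{p_k}$.

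With this in hand I would argue by induction. For the base case, $g_{p_0}\equiv 0$ and $(p_0,y_0)\in\gph G$, so Lemma~\ref{pert.reg} applied with $x=y_0$ and $x'=u_0$ produces $p_1\in G^{-1}(u_0)$ with $d(p_1,p_0)\le\Theta\|y_0-u_0\|_e\le\Theta(1+\iota)\|y_0\|_e$; the hypothesis $\|u_0\|_e\le\iota\|y_0\|_e$ together with $\|y_0\|_e\le\beta/(1+\iota)$ guarantees $u_0\in\mathbb{B}_{\beta}[y_0]$. For the inductive step, given $p_k$ I would locate a value $x_k\in(g_{p_k}+G)(p_k)$ by substituting the previous inclusion $u_{k-1}\in f(p_{k-1})+{\cal D}f(p_{k-1})[\exp^{-1}_{p_{k-1}}p_k]+F(p_k)$, apply Lemma~\ref{pert.reg} with $x_k$ as the base value and $x'=-y_k$, and obtain $p_{k+1}$ with $d(p_{k+1},p_k)\le\Theta\|x_k+y_k\|_e$. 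The key cancellation is
\[
x_k+y_k=\bigl(f(p_k)-f(p_{k-1})-{\cal D}f(p_{k-1})[\exp^{-1}_{p_{k-1}}p_k]\bigr)+(u_{k-1}-u_k),
\]
which by the first inequality in (v) and condition (iv) is bounded in norm by $(\epsilon+\iota)d(p_k,p_{k-1})$. Hence $d(p_{k+1},p_k)\le\Theta(\epsilon+\iota)d(p_k,p_{k-1})=\hat{\alpha}\,d(p_k,p_{k-1})$, which is exactly the contraction giving (A2); summing the geometric series yields (A1), and the inclusion defining $p_{k+1}$ is (A3).

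Once (A1)--(A2) are available, convergence is routine: (A2) shows $\{p_k\}$ is Cauchy with geometric rate, so it converges to some $\bar p\in\B_{\delta}[p_0]\subset\Omega$ (here $\Omega$ is closed), and passing to the limit in (A3)---using continuity of $f$ and ${\cal D}f$, the facts $d(p_{k-1},p_k)\to 0$ and $\|u_{k-1}\|_e\le\iota\,d(p_{k-1},p_{k-2})\to 0$, and the local closedness of $\gph G$, hence of $\gph F$, built into the metric regularity of $G$---gives $-f(\bar p)\in F(\bar p)$, i.e. $\bar p$ solves \eqref{eq:mainproblem}. The stated error bound follows from $d(p_k,\bar p)\le\sum_{j\ge k}d(p_{j+1},p_j)\le\frac{\hat{\alpha}^{k}}{1-\hat{\alpha}}\Theta(1+\iota)\|y_0\|_e$.

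The main obstacle is not the contraction itself but the bookkeeping that keeps every object inside the region where the hypotheses are valid throughout the induction: one must verify simultaneously that each $p_k$ stays in $\B_{\delta}(p_0)$ (so that (v) and the definition of $G$ apply) and in $\B_{\alpha}[p_0]$ (so that Lemma~\ref{pert.reg} may be invoked from $p_k$), and that the right-hand sides $x_k$ and $-y_k$ stay in $\mathbb{B}_{\beta}[y_0]$. This is precisely the purpose of the fourfold minimum imposed on $\|y_0\|_e$ in \eqref{eq:y0.bound.s-l}: the term $\delta(1-\hat{\alpha})/(\Theta(1+\iota))$ forces $\sup_k d(p_k,p_0)\le\Theta(1+\iota)\|y_0\|_e/(1-\hat{\alpha})\le\delta$, the terms $\beta/(1+\iota)$ and $\beta/(\Theta(1+\iota))$ control the base step, and $\beta(1-\hat{\alpha})/(\hat{\alpha}-\hat{\alpha}^2+1)$ is calibrated to keep the residuals $x_k,-y_k$ within $\mathbb{B}_{\beta}[y_0]$ as $k$ grows. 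Carefully propagating these containments through the inductive step---rather than any single estimate---is where the real work lies.
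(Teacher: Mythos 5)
Your proposal is correct and follows essentially the same route as the paper's own proof: the same auxiliary perturbations $g_{p_k}$, the same application of Lemma~\ref{pert.reg} to the fixed linearization $G$ (your $x_k$ and $-y_k$ are exactly the paper's $w_k$ and $z_k$), the same cancellation yielding the contraction factor $\hat{\alpha}=\Theta(\epsilon+\iota)$, and the same Cauchy-sequence passage to the limit. The bookkeeping you flag at the end (keeping $p_k$ in $\B_{\delta}(p_0)\cap\B_{\alpha}[p_0]$ and the residuals in $\mathbb{B}_{\beta}[y_0]$ via the fourfold minimum on $\|y_0\|_e$) is precisely what the paper's induction verifies.
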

\begin{proof}
Note that the function
         $g_p\colon {\cal M} \to \mathbb{R}^m$, $p \in \B_{\delta}(p_0)$,  defined by 
	\begin{equation}\label{def:littlegp-sl}
	g_p(q) =
	\begin{cases}
	 {\cal D}f(p) [\exp_{p}^{-1}q - \exp_{p}^{-1} p_0] -  {\cal D}f(p_0) [ \exp_{ p_0 }^{-1} q ],   & q \in \B_{ \delta }( p_0 )
		\\
		\qquad 0, & q\in {\cal M} \backslash \B_{\delta}(p_0),
	\end{cases}
\end{equation}
	satisfies $g_p(p_0) = 0$. From  \eqref{def:littlegp-sl} and the second inequality in $(v)$,  it follows that
	\begin{equation*}
		\| 	g_{p} (q) - 	g_{p} (q') \|_e = \|{\cal D}f(p) [\exp_{p}^{-1}q - \exp_{p}^{-1}q'] -  {\cal D}f(p_0)[ \exp_{p_0}^{-1}q -  \exp_{p_0}^{-1}q']\|_e  
		\leq \mu d(q,q'),
	\end{equation*}
	for all $p, q,q'\in \B_{\delta}(p_0)$. Thus, by using $(ii)$, one has
	\begin{equation*}
		\alpha \leq a/2, \qquad \mu \alpha + 2 \beta \leq b, \qquad 2 \Theta\beta \leq \alpha,
	\end{equation*}
	and 
	\begin{equation*}\label{eq:ins.lemp.gb.cl.1}
		\|g_p(p_0)\|_e < \beta, \quad 
		\| 	g_{p} (q) - 	g_{p} (q') \|_e  
		\leq \mu d(q,q'), \quad  p, q, q' \in {\cal B}_{\alpha}(p_0).
	\end{equation*}
	Hence, we can apply Lemma~\ref{pert.reg} with
	$g=g_p$, $F=G$,  $\nu=\mu$, $\sigma' = \Theta$, $\bar{x}=y_0$ and $\bar{p}=p_0,$ to obtain the following statement:
	\begin{itemize}
	\item[ \textbf{(S)} ]
	for every $x,x' \in \mathbb{B}_{\beta}[y_0]$ and every  $\tilde q \in(g_p + G)^{-1}(x)\cap \B_{\alpha}[p_0]$ there exists  $\tilde q'\in (g_p + G)^{-1}(x')$ such that $d(\tilde q,\tilde q')\leq  \Theta  \| x - x' \|_e$.
	\end{itemize}
	Due to $(iv)$, for every $k\in \mathbb{N}$, the following can be stated:
	\begin{equation}\label{eq:ine.ip.-sl}
	 p_{k}\neq p_{k-1} \in \B_{\delta}(p_0), \,u_{k} \in R_k(p_k)
	  \implies
	 \| u_k \|_e + \|u_{k-1}\|_e \leq \iota d(p_k,p_{k-1}).
	 \end{equation}
	We are now able to prove (A1)--(A3) by induction. 
	By using statement \textbf{(S)} with $x=y_0,$ $\tilde q=p=p_0,$ and $x'=u_0,$ we conclude that 
	there exists  $p_1 :=\tilde q'\in (g_{p_0} + G)^{-1}(u_0)$ such that
		\begin{equation*}\label{eq.bytheo1-sl}
		d(p_1 , p_0) \leq \Theta \|y_0 - u_0\|_e\le \Theta(1+\iota) \|y_0 \|_e,
	\end{equation*} 
where the last inequality follows from $(iii)$. 
Moreover, the inclusion $p_1 \in (g_{p_0} + G)^{-1}(u_0)$ is equivalent to
	$$
	u_0\in f(p_0)+{\cal D}f(p_0)[\exp^{-1}_{p_0} p_1]+F(p_1).
	$$
	Hence, (A1)--(A3) hold for $k=1$. 
	For $k>1$, assume the induction hypothesis:  there exist $p_j\in \B_{\delta}(p_0)$ and $u_j\in R_j(p_j)$ such that (A1)--(A3) hold for every $j\in \{1,2,\ldots,k\}$. Denote
	\begin{align}
		z_k & \coloneqq f(p_0)-f(p_k) - {\cal D}f(p_k)[\exp^{-1}_{p_k} p_0] + u_k,	\label{defzk-slc}\\
	        w_k & \coloneqq f(p_0)-f(p_{k-1})-{\cal D}f(p_k)[\exp^{-1}_{p_k} p_0]-{\cal D}f(p_{k-1})[\exp^{-1}_{p_{k-1}} p_k] + u_{k-1}. \label{defwk-slc}
	\end{align}
	By (A3), \eqref{def:littlegp-sl}, \eqref{eq:mappauxG-sl} we find $p_k\in (g_{p_{k}} + G)^{-1}(w_k)$. On the other hand, the first inequality in $(v)$, \eqref{eq:ine.ip.-sl}, (A1), (A2) and \eqref{eq:y0.bound.s-l} yield
	\begin{align*}
		\|z_k - y_0\|_e &\leq \| f(p_0) - f(p_k)  -{\cal D}f(p_k)[\exp^{-1}_{p_k} p_0] \|_e + \|u_k\|_e + \|y_0\|_e\\
		&\leq \epsilon  d(p_k,p_0) + \iota d(p_k,p_{k-1}) + \|y_0\|_e \\
		&\leq \epsilon  d(p_k,p_0) + (\epsilon + \iota) d(p_k,p_{k-1}) + \|y_0\|_e 
	\end{align*}
        and
	\begin{align*}
		\|w_k-y_0\|_e &\leq \|f(p_0)-f(p_{k}) - {\cal D}f(p_{k})[\exp^{-1}_{p_{k}} p_0] \|_e \\
		&+\|f(p_k)-f(p_{k-1})-{\cal D}f(p_{k-1})[\exp^{-1}_{p_{k-1}} p_k]\|_e + \|u_{k-1}\|_e + \|y_0\|_e\\
		&\leq \epsilon d(p_{k},p_0) +( \epsilon  + \iota ) d(p_{k},p_{k-1}) + \|y_0\|_e
	\end{align*}
	and
	\begin{align*}
		 \epsilon  d(p_k,p_0) + (\epsilon + \iota) d(p_k,p_{k-1}) + \|y_0\|_e 
		&\leq  \frac{\epsilon\Theta}{ 1 - \hat{\alpha} }\|y_0\| +( \epsilon  + \iota ) \Theta\|y_0\| + \|y_0\|_e\\
	          &\leq  \frac{(\epsilon + \iota)\Theta}{ 1 - \hat{\alpha} } \|y_0\| +( \epsilon  + \iota ) \Theta\|y_0\| + \|y_0\|_e\\
	          &=   \frac{ \hat{\alpha} - \hat{\alpha}^2+1 }{ 1 - \hat{\alpha} }  \|y_0\|_e \leq \beta,
	\end{align*}
	which implies $w_k, z_k\in\mathbb{B}_{\beta}[y_0]$. Therefore, we can apply  \textbf{(S)} with $x'=z_k,$ $\tilde q=p=p_k$ and $x=w_k$ to conclude that there exists $q'\in (g_{p_{k}} + G)^{-1}(z_k)$ such that $d(q',p_k) \leq \Theta \|z_k - w_k\|_e$. Putting $q'=p_{k+1}$ in this inequality and taking into account \eqref{defzk-slc}, \eqref{defwk-slc}, the first inequality in $(v)$, \eqref{eq:ine.ip.-sl} and (A2), we get
\begin{align*}
	d(p_{k+1},p_k) &\leq \Theta \|z_k - w_k\|_e \\
	&\le \Theta \left( \|-f(p_k) +f(p_{k-1})+{\cal D}f(p_{k-1})[\exp^{-1}_{p_{k-1}} p_k] \|_e + \|u_k \|_e + \| u_{k-1}\|_e \right) \nonumber\\
	&\leq \Theta ( \epsilon + \iota) d(p_k, p_{k-1}) 
	= \hat{\alpha} d(p_k, p_{k-1}) \leq \hat{\alpha}^k \Theta \|y_0\|_e\le \hat{\alpha}^k \Theta(1+\iota) \|y_0\|_e.
\end{align*}
In view of this and (A1), we also have
        \begin{align*}
		d(p_{k+1}, p_0)&\leq d(p_{k+1}, p_k) + d(p_{k}, p_0)\nonumber\\
		&\leq \hat{\alpha}^k\Theta(1+\iota)\|y_0\|_e+\frac{1-\hat{\alpha}^{k}}{1-\hat{\alpha}}\Theta(1+\iota)\|y_0\|_e=\frac{1-\hat{\alpha}^{k+1}}{1-\hat{\alpha}}\Theta(1+\iota)\|y_0\|_e.
	\end{align*}
	Furthermore, it comes from $p_{k+1} \in (g_{p_k} + G)^{-1}(z_k)$ that
	$$
	u_k  \in   f(p_k) +
	{\cal D}f(p_k) [ \exp_{p_k}^{-1} p_{k+1}]  + F(p_{k+1}).
	$$
	Overall, we can conclude that (A1), (A2) and (A3) hold for all $k\geq 1.$ 

Now our focus is to show the convergence of $\{p_k\}.$ Using (A1) and (A2) in a simple induction procedure, we can show that
$$
d(p_{m}, p_n)\leq \hat{\alpha}^m \frac{1-\hat{\alpha}^{n-m}}{1-\hat{\alpha}}\Theta(1+\iota)\|y_0\|_e
$$
holds for all $m<n$, $m,n\in \mathbb{N}$. Hence, since $\hat{\alpha}\in (0,1)$,  $\{p_k\}$ is a Cauchy sequence, and consequently, there exists $\bar{p}\in {\cal M}$ such that $\lim_{k \to \infty} p_k=\bar{p}.$ From (A1) and \eqref{eq:y0.bound.s-l}, it follows that $p_k\in \B_{\delta}(p_0)$ for all $\mathbb{N}$, which implies that $\bar{p}\in \B_{\delta}[p_0]\subset \Omega$. As $f$ is continuous and $F$ has closed graph, by letting $k \to +\infty$ in \eqref{eq:ine.ip.-sl} and (A3), we conclude that $\bar{p}$ is a solution of \eqref{eq:mainproblem}. Finally, using (A2), we get
\begin{align*}
	d(p_k, \bar{p})&= \lim_{m \to \infty} d(p_{k}, p_{k+m})\leq \lim_{m \to \infty}\sum_{i=k}^{k-1+m}d(p_i, p_{i+1})\\
	&\leq \lim_{m \to \infty}\sum_{i=k}^{k-1+m}\hat{\alpha}^i\Theta(1+\iota)\|y_0\|_e= \lim_{m \to \infty}\frac{\hat{\alpha}^k(1-\hat{\alpha}^m)}{1-\hat{\alpha}}	\Theta(1+\iota)\|y_0\|_e
	= \frac{\hat{\alpha}^k}{1-\hat{\alpha}}	\Theta(1+\iota)\|y_0\|_e,
\end{align*}
for all $k\in \mathbb{N}$.
\end{proof}

We conclude this section with some remarks about Theorem~\ref{teo:semilocal}. First, it is a result to guarantee the existence of a neighborhood which the inexact Newton's method is well-defined and, hence its convergence to a solution of \eqref{eq:mainproblem}. Thus, in practice, it can be hard to evaluate all the parameters in itens $(i)-(v)$. Second, it states that if the initial point $p_0\in {\cal M}$ satisfies the inclusion $y_0\in (f+F)(p_0)$ where $y_0\in B_{\chi}(0)$ and 
\[
0<\chi:=\min\left\{ \frac{\beta}{\Theta(1+\iota)}, \frac{\beta}{1+\iota},   \frac{\beta( 1 - \hat{\alpha} )}{\hat{\alpha} - \hat{\alpha}^2 + 1 },   \frac{\delta( 1 - \hat{\alpha} )}{\Theta}  \right\}
\]
then the inexact Newton's method  finds a solution of \eqref{eq:mainproblem} in  $B_{\tilde \chi}(p_0)$, where
\[
\tilde \chi=\frac{\Theta(1+\iota)}{1-\hat{\alpha}}\|y_0\|_e.
\]
We can  interpret this remark as follows: although we cannot evaluate $\chi$ in practice, if we  choose $y_0$ close to $0$ then the proposed method will work. Third, the assumption $\|u_0\|\le \iota\|y_0\|_e$ in $(iii)$ ensures that the first call (and the subsequent calls) to the inexact Newton's method for solving \eqref{eq:mainproblem} address the inexactness. Fourth, conditions in (i)-(ii) are usual in the context of metric regularity, and both inequalities in 
$(v)$ are related to the smoothness assumption of $f$ and the continuity of the exponential map. Thus, if $\epsilon$ and $\eta$ are small enough, this assumption holds.

\section{Application }\label{sec:rpscandeg}

In this section, we investigate three well-known problems that can be formulated as generalized equations on Riemannian manifolds.

\begin{example}[System of Inequalities and Equalities]
Consider the generalized equation \eqref{eq:mainproblem} with $F \equiv -K$, where $K\subset\mathbb{R}^m$ is the fixed cone 
$$
\mathbb{R}_{-}^{s} \times \{0\}^{m-s} \coloneqq \{x\in\mathbb{R}^m\colon x_i\leq 0 \,\, \mbox{for} \,\, i=1,\ldots,s \,\, \mbox{and} \,\, x_i= 0 \,\, \mbox{for} \,\, i=s+1,\ldots,m\}.
$$
It is easy to see that this generalized equation is equivalent to the following system of equalities and inequalities: 
\begin{equation}\label{sistem}
\begin{aligned}
f_i(p) & \leq 0 \quad \text{for } i = 1, \ldots, s, \\
f_i(p) & = 0 \quad \text{for } i = s+1, \ldots, m.
\end{aligned}
\end{equation}
Let $\bar{p}$ solve \eqref{sistem} and let each $f_i$ be continuously differentiable around $\bar{p}$ for all $i=1,\ldots,m$. As defined in \cite[Definition 3.12]{BergmannHerzog2019}, the Mangasarian--Fromovitz condition for system \eqref{sistem} is as follows: 
\begin{equation}\label{MFCQ}
\exists v \in {\cal T}_{\bar{p}}{\cal M} \text{ such that} \quad
\begin{cases}
\langle \grad f_i(\bar{p}), v \rangle < 0 & \quad \text{for } i \in \{1, \ldots, s\} \text{ with } f_i(\bar{p}) = 0, \\
\langle \grad f_i(\bar{p}), v \rangle = 0 & \quad \text{for } i \in \{s+1, \ldots, m\}.
\end{cases}
\end{equation}
After making simple adaptations of \cite[Example 4D.3]{DontchevRockafellar2014} to the Riemannian context, we can assert that condition \eqref{MFCQ} is equivalent to the Aubin property of \( (-f + K)^{-1} \) at \( \bar{x} \) for \( \bar{p} \), which, in turn, by Theorem \ref{teo:Aubin_MR}, is equivalent to the metric regularity of \( -f + K \) at \( \bar{p} \) for \( \bar{x} \).
\end{example}

The following proposition serves as a prerequisite for discussing the last two examples in this section. This result establishes an equivalence between problems  \eqref{eq:mainproblem} and \eqref{eq:VF}.
\begin{proposition}\label{prop:reqeezconhadsc}
Let \(\Omega \subseteq \mathcal{M}\) and let \(\{E_1, \ldots, E_n\}\) be a basis for \(\mathcal{X}(\Omega)\). Suppose \(V \colon \Omega \to \mathcal{T}\mathcal{M}\) is a single-valued vector field, and \(Z \colon \Omega \rightrightarrows \mathcal{T}\mathcal{M}\) is a set-valued vector field. Then, a point \(\bar{p}\) is a solution to \eqref{eq:VF} if and only if it is a solution to the generalized equation \eqref{eq:mainproblem} with \(f \colon \Omega \to \mathbb{R}^n\) and \(F \colon \Omega \rightrightarrows \mathbb{R}^n\) defined, respectively, by 
\begin{equation}\label{def:fandF.apsec}
f(p) \coloneqq (\langle V(p), E_1(p) \rangle, \ldots, \langle V(p), E_n(p) \rangle) \quad \text{and} \quad F(p) \coloneqq \bigcup_{v \in Z(p)} (\langle v, E_1(p) \rangle, \ldots, \langle v, E_n(p) \rangle).
\end{equation}
\end{proposition}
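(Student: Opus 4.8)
The plan is to prove that the two solution concepts coincide by unwinding both definitions and reducing everything to the non-degeneracy of the Riemannian metric against the frame $\{E_1,\ldots,E_n\}$. First I would record what each side means. By the definition of a singularity of a sum of vector fields, $\bar{p}$ solves \eqref{eq:VF} precisely when there exists $v\in Z(\bar{p})$ with $V(\bar{p})+v=0_{\bar{p}}$. On the other side, by the formulas for $f$ and $F$ in \eqref{def:fandF.apsec}, $\bar{p}$ solves \eqref{eq:mainproblem} exactly when there is a point $w\in F(\bar{p})$, i.e. a vector $v\in Z(\bar{p})$ with $w=(\langle v,E_1(\bar{p})\rangle,\ldots,\langle v,E_n(\bar{p})\rangle)$, such that $f(\bar{p})+w=0$ in $\mathbb{R}^n$.

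Next I would combine the two coordinate expressions using the linearity of the metric in its first argument. Writing the identity $f(\bar{p})+w=0$ componentwise and recalling the formula for $f$, the $i$-th equation reads $\langle V(\bar{p}),E_i(\bar{p})\rangle+\langle v,E_i(\bar{p})\rangle=0$, that is
\[
\langle V(\bar{p})+v,\,E_i(\bar{p})\rangle=0,\qquad i=1,\ldots,n.
\]
Thus $\bar{p}$ solves \eqref{eq:mainproblem} if and only if there is $v\in Z(\bar{p})$ for which the tangent vector $u\coloneqq V(\bar{p})+v$ is orthogonal to every frame vector $E_i(\bar{p})$.

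The key step is then to show that orthogonality to the whole frame forces $u=0_{\bar{p}}$. Since $\{E_1,\ldots,E_n\}$ is a basis of $\mathcal{X}(\Omega)$, the vectors $\{E_1(\bar{p}),\ldots,E_n(\bar{p})\}$ form a basis of $\mathcal{T}_{\bar{p}}\mathcal{M}$; writing $u=\sum_j c_j E_j(\bar{p})$, the orthogonality conditions become $\sum_j G_{ij}c_j=0$ with Gram matrix $G_{ij}=\langle E_i(\bar{p}),E_j(\bar{p})\rangle$. Because the Riemannian metric is positive definite and the $E_j(\bar{p})$ are linearly independent, $G$ is invertible, forcing $c=0$ and hence $u=0_{\bar{p}}$; the converse implication $u=0_{\bar{p}}\Rightarrow\langle u,E_i(\bar{p})\rangle=0$ is immediate. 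This establishes that the $n$ scalar relations $\langle V(\bar{p})+v,E_i(\bar{p})\rangle=0$ are equivalent to the single vector equation $V(\bar{p})+v=0_{\bar{p}}$. Chaining the equivalences, $\bar{p}$ solves \eqref{eq:mainproblem} if and only if there is $v\in Z(\bar{p})$ with $V(\bar{p})+v=0_{\bar{p}}$, which is exactly the condition $0_{\bar{p}}\in V(\bar{p})+Z(\bar{p})$ defining a solution of \eqref{eq:VF}, completing the proof.

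The argument presents no genuine analytic obstacle: the only point requiring care is the passage from the $n$ scalar orthogonality relations back to the single vector identity $V(\bar{p})+v=0_{\bar{p}}$, which is where the non-degeneracy of the metric (equivalently, invertibility of the Gram matrix of the frame) is essential. One must also track the \emph{same} witness $v\in Z(\bar{p})$ through the chain of equivalences, which is automatic since each step is a biconditional carrying that witness along.
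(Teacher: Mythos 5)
Your proof is correct and follows essentially the same route as the paper: unwind both definitions, use linearity of the metric to rewrite the coordinate equations as $\langle V(\bar{p})+v, E_i(\bar{p})\rangle = 0$, and then use the fact that $\{E_i(\bar{p})\}$ is a basis of $\mathcal{T}_{\bar{p}}\mathcal{M}$ together with positive definiteness of the metric to conclude $V(\bar{p})+v = 0_{\bar{p}}$. The only cosmetic difference is that you phrase the final step via invertibility of the Gram matrix, while the paper deduces $\langle V(\bar{p})+\bar{v}, w\rangle = 0$ for all $w\in\mathcal{T}_{\bar{p}}\mathcal{M}$ and invokes non-degeneracy directly; these are the same argument.
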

\begin{proof}
If \(\bar{p}\) is a solution of \eqref{eq:VF}, then there exists \(\bar{v} \in Z(\bar{p})\) such that \(V(\bar{p}) + \bar{v} = 0_{\bar{p}}\).
Consequently,
\[
\langle V(\bar{p}), E_i(\bar{p}) \rangle + \langle \bar{v}, E_i(\bar{p}) \rangle = \langle V(\bar{p}) + \bar{v}, E_i(\bar{p}) \rangle = 0 \quad \text{for all } i = 1, \ldots, n.
\]
Using the definitions of \(f\) and \(F\) in \eqref{def:fandF.apsec}, we find that \(\bar{p}\) satisfies \eqref{eq:mainproblem}. Conversely, if \(\bar{p}\) is a solution to \eqref{eq:mainproblem} with \(f\) and \(F\) defined in \eqref{def:fandF.apsec}, then there exists \(\bar{v} \in Z(\bar{p})\) such that
\[
(\langle V(\bar{p}), E_1(\bar{p}) \rangle, \ldots, \langle V(\bar{p}), E_n(\bar{p}) \rangle) + (\langle \bar{v}, E_1(\bar{p}) \rangle, \ldots, \langle \bar{v}, E_n(\bar{p}) \rangle) = 0.
\]
Since \(\{E_1(\bar{p}), \ldots, E_n(\bar{p})\}\) forms a basis for \(\mathcal{T}_{\bar{p}}\mathcal{M}\), it follows that \(\langle V(\bar{p}) + \bar{v}, v \rangle = 0\) for all \(v \in \mathcal{T}_{\bar{p}}\mathcal{M}\).
Thus, \(0_{\bar{p}} = V(\bar{p}) + \bar{v}\), implying that \(\bar{p}\) is a solution to \eqref{eq:VF}.
\end{proof}

In the following example, we discuss the variational inequality problem proposed in \cite{ShuLongLi.ChongLi.YeongChengLiou.JenChihYao.2009}, which extends the problem introduced in \cite{nemeth2003}.

\begin{example}[Variational Inequality Problem]
    Let \(V \colon \Omega\subseteq {\cal M} \to \mathcal{T}\mathcal{M}\) be a single-valued vector field. Consider the variational inequality problem 
    \begin{equation}\label{eq:prob.des.vari}
        p \in \Omega, \quad \langle V(p), \dot{\gamma}(0) \rangle \geq 0 \quad \text{for all } \gamma \in \Gamma^{\Omega}_{p,q},
    \end{equation}
    where \(\Gamma^{\Omega}_{p,q}\) is the set of all geodesics \(\gamma \colon [0, 1] \to \mathcal{M}\) satisfying \(\gamma(0)=p\), \(\gamma(1)=q\), and \(\gamma(t) \in \Omega\) for all \(t \in [0,1]\). Since the normal cone associated with \(\Omega\) is the set-valued vector field \(N_{\Omega} \colon \Omega \rightrightarrows \mathcal{T}\mathcal{M}\) defined by
\begin{equation*}\label{eq:cone.normal}
    N_{\Omega}(p) \coloneqq \left\{ v \in \mathcal{T}_p \mathcal{M} \colon \langle v, \dot{\gamma}(0) \rangle \leq 0 \text{ for each } \gamma \in \Gamma^{\Omega}_{p,q}\right\},
\end{equation*}
for all \(p \in \Omega\) (see \cite{LiChong-YaoJenChih.2012}), the problem \eqref{eq:prob.des.vari} is equivalent to
\[
    p \in \Omega, \quad V(p) + N_{\Omega}(p) \ni 0_p,
\]
which, in turn, by Proposition \ref{prop:reqeezconhadsc}, is equivalent to generalized equation \eqref{eq:mainproblem} with \(f \colon \Omega \to \mathbb{R}^n\) and \(F \colon \Omega \rightrightarrows \mathbb{R}^n\) as in \eqref{def:fandF.apsec}, with \(Z = N_{\Omega}\) and \(\{E_1, \ldots, E_n\}\) being any basis for \(\mathcal{X}(\Omega)\).
\end{example}

The following example proposes an approach based on the Riemannian extension of the analysis presented in \cite{izmailov2010inexact}.

\begin{example}[KKT Conditions]\label{ex:kktconditions}
    Consider the constrained nonlinear optimization problem on $ {\cal M}$:
    \begin{align}
        \text{minimize } & \, {\bf f}(p)  \label{pro:kkt1l}\\
        \text{subject to } & \,  {\bf g}(p) \leq 0, \,\, {\bf h}(p) = 0,  \label{pro:kkt2l}
    \end{align}
    where ${\bf f} \colon {\cal M} \to \mathbb{R}$, ${\bf g} \coloneqq ({\bf g}_1, \ldots, {\bf g}_{m_1}) \colon {\cal M} \to \mathbb{R}^{m_1}$, and ${\bf h} \coloneqq ({\bf h}_1, \ldots, {\bf h}_{m_2}) \colon {\cal M} \to \mathbb{R}^{m_2}$ are assumed to be continuously differentiable functions. The constraint ${\bf g}(p) \leq 0$ means that ${\bf g}_i(p) \leq 0$ for all $i = 1, \ldots, m_1$. The Lagrangian function ${\cal L}\colon {\cal M} \times \mathbb{R}^{m_1} \times \mathbb{R}^{m_2} \to \mathbb{R}$ is given by
    \begin{equation*}\label{eq:lagrangiana}
        \mathcal{L}(p, \mu, \lambda) := {\bf f}(p) + \sum_{i=1}^{m_1} \mu_i {\bf g}_i(p) + \sum_{i=1}^{m_2} \lambda_i {\bf h}_i(p),
    \end{equation*}
    where $\mu \coloneqq (\mu_1, \ldots, \mu_{m_1}) \in \mathbb{R}^{m_1}$ and $\lambda \coloneqq (\lambda_1, \ldots, \lambda_{m_2}) \in \mathbb{R}^{m_2}$. For each $(\mu,\lambda) \in \mathbb{R}^{m_1} \times \mathbb{R}^{m_2}$, consider the function ${\cal L}_{\mu,\lambda} \colon {\cal M} \to \mathbb{R}$ defined by ${\cal L}_{\mu,\lambda}(p) = \mathcal{L}(p, \mu, \lambda)$ for all $p \in {\cal M}$. Based on \cite{BergmannHerzog2019}, we assert that the KKT conditions for \eqref{pro:kkt1l}--\eqref{pro:kkt2l} are:
    \begin{align}
        \grad \mathcal{L}_{\mu, \lambda}(p) = \grad {\bf f}(p) + \sum_{i=1}^{m_1} \mu_i \grad {\bf g}_i(p) + \sum_{i=1}^{m_2} \lambda_i \grad {\bf h}_i(p) = 0_p, \label{pro:condkkt1l}\\    
        \mu \geq 0, \quad {\bf g}(p) \leq 0, \quad 
        \sum_{i=1}^{m_1} \mu_i {\bf g}_i(p) = 0, \label{pro:condkkt2}\\
        {\bf h}(p) = 0.          \label{pro:condkkt3}
    \end{align}
    Let \(\widetilde{\mathcal{M}} \coloneqq \mathcal{M} \times \mathbb{R}^{m_1} \times \mathbb{R}^{m_2}\) and consider the vector field \(V \colon \widetilde{\mathcal{M}} \to \mathcal{T}\widetilde{\mathcal{M}} \equiv \mathcal{T}\mathcal{M} \times \mathbb{R}^{m_1} \times \mathbb{R}^{m_2}\) defined by
\begin{equation*}\label{appkktV}
    V(p, \mu, \lambda) = (\grad \mathcal{L}_{\mu, \lambda}(p), {\bf g}(p), {\bf h}(p)),
\end{equation*}
and the set-valued vector field \(Z \colon \widetilde{\mathcal{M}} \rightrightarrows \mathcal{T}\widetilde{\mathcal{M}}\) defined by
\begin{equation*}\label{appkktZ}
    Z(p, \mu, \lambda) = \left\{ 
    \begin{array}{lll}
        \{0_p\} \times \left\{ y \in \mathbb{R}_+^{m_1} \colon \sum_{i=1}^{m_1} \mu_i y_i = 0 \right\} \times \{0\}, & \text{if } \mu \geq 0; \\
     \,   \emptyset, & \text{otherwise},
    \end{array} 
    \right.
\end{equation*}
where \(\mathbb{R}_+^{m_1}\) denotes the set of vectors in \(\mathbb{R}^{m_1}\) with nonnegative coordinates. Note that KKT system \eqref{pro:condkkt1l}--\eqref{pro:condkkt2}--\eqref{pro:condkkt3} is equivalent to the problem 
\begin{equation}\label{eq:prob.pczkktsecdc}
     (p, \mu, \lambda) \in \widetilde{\mathcal{M}}, \quad V(p, \mu, \lambda) + Z(p, \mu, \lambda) \ni (0_p, 0, 0).
\end{equation}
Let \(\{E_{1}, \ldots, E_{n}\}\) be a basis for \(\mathcal{X}(\mathcal{M})\) and \(\{E_{n+1}, \ldots, E_{n+m_1+m_2}\}\) be the canonical basis for \(\mathbb{R}^{m_1} \times \mathbb{R}^{m_2}\). Then, \(\{E_1, \ldots, E_{n+m_1+m_2}\}\) forms a basis for \(\mathcal{X}(\widetilde{\mathcal{M}})\) and, by Proposition \ref{prop:reqeezconhadsc}, \eqref{eq:prob.pczkktsecdc} is equivalent to the generalized equation
\begin{equation*}\label{eq:eggexapp}
    (p, \mu, \lambda) \in \widetilde{\mathcal{M}}, \quad f(p, \mu, \lambda) + F(p, \mu, \lambda) \ni 0,
\end{equation*}
where \(f \colon \widetilde{\mathcal{M}} \to \mathbb{R}^n \times \mathbb{R}^{m_1} \times \mathbb{R}^{m_2}\) and \(F \colon \widetilde{\mathcal{M}} \rightrightarrows \mathbb{R}^n \times \mathbb{R}^{m_1} \times \mathbb{R}^{m_2}\) are defined, respectively, by
\begin{equation*}\label{def:fandF.apsecfkkt}
    (f (p, \mu, \lambda))_i = \left\{ 
    \begin{array}{lll}
        \langle \grad \mathcal{L}_{\mu, \lambda}(p), E_i(p) \rangle, & \text{if } i=1,\ldots,n; \\
        ({\bf g}(p))_{i-n}, & \text{if } i=n+1,\ldots,n+m_1; \\
        ({\bf h}(p))_{i-n-m_1}, & \text{if } i=n+m_1+1,\ldots,n+m_1+m_2;
    \end{array} 
    \right.
\end{equation*}
and
\begin{equation*}\label{def:fandF.apsecFkkt}
    F(p, \mu, \lambda) = \left\{ 
    \begin{array}{lll}
        \{0\} \times \left\{ y \in \mathbb{R}_+^{m_1} \colon \sum_{i=1}^{m_1} \mu_i y_i = 0 \right\} \times \{0\}, & \text{if } \mu \geq 0; \\
       \,  \emptyset, & \text{otherwise}.
    \end{array} 
    \right.
\end{equation*}
\end{example}



\section{Numerical Example}\label{sec:numerical_example}

In this section, we present a numerical example based on a generalized equation derived from Example \ref{ex:kktconditions}, and solve it using the inexact Newton method described in \eqref{eq:INMtp}. All computations were performed on a MacBook Pro running macOS Sonoma 14.5, equipped with 16 GB RAM, an Apple M1 Pro CPU, and MATLAB R2022a. To ensure reproducibility, 
we fixed the randomness using MATLAB's built-in function \textit{rng(2024)}.

Here, we consider the Riemannian constrained optimization problem on $ {\cal M}$:
\begin{align}
    \text{minimize } \,\, & \, {\bf f}(p) \coloneqq \frac{1}{N}\sum_{i=1}^{N}d^{2}(p, p^i) \label{pro:kkt1lapp} \\
    \text{subject to } & \, {\bf g}(p) \coloneqq d^{2}(p, \tilde{p}) - r^2 \leq 0, \label{pro:kkt2lapp} 
\end{align}
where \( r > 0 \) and \( p^1, \ldots, p^N, \tilde{p} \in \cal{M} \) are chosen such that \( r < r_{inj}(\tilde{p}) \) and \( p^1, \ldots, p^N \in {\cal B}_r(\tilde{p}) \). We will use the fact that
\begin{equation}\label{secap.grad.fg}
\grad {\bf f}(p) = -\frac{2}{N}\sum_{i=1}^{N}\exp^{-1}_{p}p^{i} \quad \text{and} \quad \grad {\bf g}(p) = -2\exp^{-1}_{p}\tilde{p}, \quad \forall p \in {\cal B}_r(\tilde{p}).
\end{equation}
The problem defined by \eqref{pro:kkt1lapp}--\eqref{pro:kkt2lapp} represents a constrained version of the Riemannian center of mass, also known as the (Riemannian) mean, as proposed in \cite{BergmannHerzog2019}. This problem was first introduced in \cite{karcher1977riemannian} and has been extensively studied in recent literature (see, for example, \cite{bini2013computing,ferreira2019gradient,weber2023riemannian}).

Particularly, we are interested in problem \eqref{pro:kkt1lapp}--\eqref{pro:kkt2lapp} with $\mathcal{M} = \mathbb{S}^3 \coloneqq \{ p \in \mathbb{R}^4 \ |\ \|p\|_e = 1 \}$, equipped with the metric of the ambient space $\mathbb{R}^4$. According to \cite{hesselholtvector}, the skew-symmetric matrices

\[
M_1 = \begin{bmatrix}
0 & -1 & 0 & 0\\
1 & 0 & 0 & 0\\
0 & 0 & 0 & -1\\
0 & 0 & 1 & 0
\end{bmatrix}, \quad
M_2 = \begin{bmatrix}
0 & 0 & -1 & 0\\
0 & 0 & 0 & 1\\
1 & 0 & 0 & 0\\
0 & -1 & 0 & 0
\end{bmatrix}, \quad
M_3 = \begin{bmatrix}
0 & 0 & 0 & -1\\
0 & 0 & -1 & 0\\
0 & 1 & 0 & 0\\
1 & 0 & 0 & 0
\end{bmatrix},
\]
induce an orthonormal basis of vector fields $\{ E_1, E_2, E_3 \}$ on $\mathbb{S}^3$, defined by $E_i(p) = M_i p$ (standard matrix-vector product) for all $p \in \mathbb{S}^3$ and $i = 1, 2, 3$. By defining $\mathcal{L}_{\mu} \colon \mathbb{S}^3 \to \mathbb{R}$ by 
\begin{equation*}
    \mathcal{L}_{\mu}(p) \coloneqq {\bf f}(p) + \mu {\bf g}(p),
\end{equation*}
it follows from Example \ref{ex:kktconditions} that the KKT conditions for \eqref{pro:kkt1lapp}--\eqref{pro:kkt2lapp} are:
\begin{align}
    \grad \mathcal{L}_{\mu}(p) = \grad {\bf f}(p) + \mu \grad {\bf g}(p) &= 0_p,\label{kkt1}
    \\
    \mu \geq 0, \quad {\bf g}(p) \leq 0, \quad \mu {\bf g}(p) &= 0,  \label{kkt2}
\end{align}
which is equivalent to the generalized equation
\begin{equation}\label{GEquation}
(p, \mu) \in \mathbb{S}^3 \times \mathbb{R}, \quad f(p, \mu) + F(p, \mu) \ni 0,
\end{equation}
where \(f \colon \mathbb{S}^3 \times \mathbb{R} \to \mathbb{R}^3 \times \mathbb{R}\) and \(F \colon \mathbb{S}^3 \times \mathbb{R} \to \mathbb{R}^3 \times \mathbb{R}\) are defined, respectively, by
\begin{equation}\label{V_formulation}
f(p, \mu) = (\langle \grad \mathcal{L}_{\mu}(p), E_1(p) \rangle, \langle \grad \mathcal{L}_{\mu}(p), E_2(p) \rangle, \langle \grad \mathcal{L}_{\mu}(p), E_3(p) \rangle, {\bf g}(p))
\end{equation}
and
\begin{equation}\label{Z_formulation}
F(p, \mu) = \left\{ 
\begin{array}{ll}
    \{0\} \times \left\{ y \in \mathbb{R}_+ \colon \mu y = 0 \right\}, & \text{if } \mu \geq 0, \\
    \emptyset, & \text{otherwise},
\end{array} 
\right.
\end{equation}
for all $(p, \mu) \in \mathbb{S}^3 \times \mathbb{R}$.

To apply the inexact Newton method in \eqref{eq:INMtp} to solve \eqref{GEquation}, the subproblem in each iteration \(k\) involves computing \((v_k, \nu_k) \in \mathcal{T}_{(p_k, \mu_k)}(\mathbb{S}^3 \times \mathbb{R}) \equiv \mathcal{T}_{p_k} \mathbb{S}^3 \times \mathbb{R}\) such that
\begin{equation*}\label{EINN}
\left( f(p_k, \mu_k) + \mathcal{D}f(p_k, \mu_k)[ (v_k, \nu_k) ] + F(\exp_{p_k} v_k, \mu_k + \nu_k) \right) \cap R_k(p_k, \mu_k) \neq \varnothing.
\end{equation*}
To accomplish this, select \( u_k \in R_k(p_k, \mu_k) \) (we use \( u_k = \left[ 1/(10^k) , 1/(10^k) , 1/(10^k) , 1/(10^k) \right]^{t} \), where \( t \) denotes the transpose operation, for all \( k \)), and then solve the following optimization problem:
\begin{align}
    \text{ minimize } \,\,& \,\, \frac{1}{2}\|z + f(p_k, \mu_k) + \mathcal{D}f(p_k, \mu_k)[(v, \nu)] - u_k \|_e^2  \label{pro:kkt1lapp.subp1} \\ 
    \text{subject to } & \, (v, \nu) \in \mathcal{T}_{p_k} \mathbb{S}^3 \times \mathbb{R}, \quad z = (z_1, z_2, z_3, z_4) \in F(\exp_{p_k} v, \mu_k + \nu). \label{pro:kkt2lapp.subp1}
\end{align}
Based on the definition of \( F \) in \eqref{Z_formulation} and the fact that every \( v \in \mathcal{T}_{p_k} \mathbb{S}^3 \) can be expressed as a linear combination of \( E_{1}(p_{k}) \), \( E_{2}(p_{k}) \), and \( E_{3}(p_{k}) \), we can find a solution to \eqref{pro:kkt1lapp.subp1}--\eqref{pro:kkt2lapp.subp1} by solving the Euclidean quadratic constraint problem:
\begin{align}
    \text{minimize } \,\, & \, \frac{1}{2}\|z + f(p_k, \mu_k) + \mathcal{D}f(p_k, \mu_k)[(\alpha_1E_{1}(p_k) + \alpha_2E_{2}(p_k) + \alpha_3E_{3}(p_k), \nu)] - u_k \|_e^2  \label{pro:kkt1lapp.subp2} \\ 
    \text{subject to } & \, (\alpha_1, \alpha_2, \alpha_3) \in \mathbb{R}^3, \,\, z_1 = z_2 = z_3 = 0, \,\, z_4 \in \mathbb{R}^{+}, \,\, \mu_k + \nu \in \mathbb{R}^{+}, \,\, z_4(\mu_k + \nu) = 0. \label{pro:kkt2lapp.subp2}
\end{align}
Thus, the iteration $k+1$ is obtained as follows:
\[
(p_{k+1}, \mu_{k+1}) = (\exp_{p_k}(\alpha^k_1E_{1}(p_k) + \alpha^k_2E_{2}(p_k) + \alpha^k_3E_{3}(p_k)), \mu_k + \nu_k)
\]
where $\alpha^k_1, \alpha^k_2, \alpha^k_3, \nu_k$ are a solution for \eqref{pro:kkt1lapp.subp2}--\eqref{pro:kkt2lapp.subp2}.

For the implementation of our algorithm, we use the following expressions derived from \eqref{secap.grad.fg} and \eqref{V_formulation}:
\begin{equation*}
f(p_k, \mu_k) = 
\begin{bmatrix}
\left\langle -\frac{2}{N} \sum_{i=1}^{N} \exp^{-1}_{p_k} p^i - 2\mu_k \exp^{-1}_{p_k} \tilde{p}, E_1(p_k) \right\rangle \\
\left\langle -\frac{2}{N} \sum_{i=1}^{N} \exp^{-1}_{p_k} p^i - 2\mu_k \exp^{-1}_{p_k} \tilde{p}, E_2(p_k) \right\rangle \\
\left\langle -\frac{2}{N} \sum_{i=1}^{N} \exp^{-1}_{p_k} p^i - 2\mu_k \exp^{-1}_{p_k} \tilde{p}, E_3(p_k) \right\rangle \\
d^2(p_k, \tilde{p}) - r^2
\end{bmatrix}
\end{equation*}
and
\begin{equation*}
\mathcal{D}f(p_k, \mu_k)[(v, \nu)] = 
\begin{bmatrix}
\langle \grad_p f_1(p_k, \mu_k), v \rangle - 2 \langle \log_{p_k} \tilde{p}, E_1(p_k) \rangle \nu \\
\langle \grad_p f_2(p_k, \mu_k), v \rangle - 2 \langle \log_{p_k} \tilde{p}, E_2(p_k) \rangle \nu \\
\langle \grad_p f_3(p_k, \mu_k), v \rangle - 2 \langle \log_{p_k} \tilde{p}, E_3(p_k) \rangle \nu \\
-2 \langle \log_{p_k} \tilde{p}, v \rangle
\end{bmatrix}, \quad  (v, \nu) \in \mathcal{T}_{p_k} \mathbb{S}^3 \times \mathbb{R},
\end{equation*}
where, for each \( j \in \{1, 2, 3\} \), \( \grad_p f_j(p_k, \mu_k) \) denotes the Riemannian gradient of \( f_j(\cdot, \mu_k) \) at \( p_k \), which is the projection of the Euclidean gradient of \( f_j(\cdot, \mu_k) \) at \( p_k \) onto \( \mathcal{T}_{p_k} \mathbb{S}^3 \). This projection can be obtained by multiplying the vector by the matrix $ \id_{4 \times 4} - p_k (p_k)^t $, where  \( \id_{4 \times 4} \) denotes the identity matrix of dimension \( 4 \times 4 \). Additionally, we need to know the following expressions:
\begin{equation*}
\exp_{p}(v) \coloneqq \left\{
\begin{split}
& \cos(\|v\|_{2})p + \sin(\|v\|_{2})\frac{v}{\|v\|_{2}},\ v \in \mathcal{T}_{p} \mathbb{S}^{3} \setminus \{0\}, \\
& p, \qquad\qquad\qquad\qquad\qquad\qquad   v = 0,
\end{split} \right.
\end{equation*}
and
\begin{equation*}
\exp_{p}^{-1}(q) \coloneqq \left\{
\begin{split}
& \frac{\arccos\langle p, q \rangle}{\sqrt{1 - \langle p, q \rangle^{2}}}(I - p p^T)q,\ q \notin \{ p, -p \}, \\
& 0, \qquad\qquad\qquad\qquad\qquad \ q = p.
\end{split} \right.
\end{equation*}
These formulas on the sphere can be found in \cite{ferreira2013projections}, for example.

Now, we are prepared for the numerical implementation. We use the Matlab built-in function \textit{rand} to generate \( p^i \), normalizing them to unit vectors. Each component of \( p^i \) lies within the interval \( (0, 1) \). Specifically, we consider four cases:

\begin{itemize}
\item[\bf A1.] \( N=10 \), \( r=2 \), \( \tilde{p} = [0,0,0,1] \), and \( p_0 = p^1 \);
\item[\bf A2.] \( N=500 \), \( r=2 \), \( \tilde{p} = [0,0,0,1] \), and \( p_0 = p^1 \);
\item[\bf A3.] \( N=10 \), \( r=0.1 \), \( \tilde{p} = [0,0,0,1] \), and \( p_0 = p^1 \);
\item[\bf A4.] \( N=500 \), \( r=0.1 \), \( \tilde{p} = [0,0,0,1] \), and \( p_0 = p^1 \);
\end{itemize}
where \( p_0 \) is the initial iteration point. For the stopping criteria, we use
\begin{equation*}
\| \Phi(p_{k}, \mu_{k})  \|_{e} \leq 10^{-12} \quad \mathrm{and} \quad {\bf g}(p_k) \leq 10^{-12}
\end{equation*}
where $\Phi \coloneqq (f_1,f_2,f_3)$.

From Figure 1 and Table 1, we can claim that for the above four cases {\bf A1}-{\bf A4}, we find a solution $(p^{\star},\mu^{\star})$ for generalized equation \eqref{GEquation}. This is because for all four cases, we achieve $\|\Phi(p^{\star},\mu^{\star})\|_{e}\leq 10^{-12}$, $ {\bf g}(p^{\star})\leq 10^{-12}$ and $\mu^{\star}\geq 0$. In addition,  KKT system \eqref{kkt1}-\eqref{kkt2} is also satisfied under $(p^{\star},\mu^{\star})$, which is asserted in theory and again ensured numerically. Furthermore, looking into cases {\bf A1}-{\bf A2}, we have ${\bf g}(p^{\star})<0$, which means that $p^{\star}$ lies in the interior of the constraint region. But for cases {\bf A3}-{\bf A4}, $p^{\star}$ is on the boundary of the constraint region because ${\bf g}(p^{\star})$ is almost equal to $0$. Due to the effect of the constraint, the convergence rate of the norm of $\Phi(p_{k},\mu_{k})$ for cases {\bf A1}-{\bf A2} is stable. For cases {\bf A3}-{\bf A4}, the convergence rate of the norm of $\Phi(p_{k},\mu_{k})$ in the first several iterations is unstable but after several iterations, the convergence rate becomes stable. This coincides with the theory that the convergence rate of the inexact Newton method becomes stable when the iteration point is close to the solution.

\begin{figure}[hthp!]
\centering
\subfigure[Case {\bf A1}]{
\includegraphics[width=2.5in,height=2.0in]{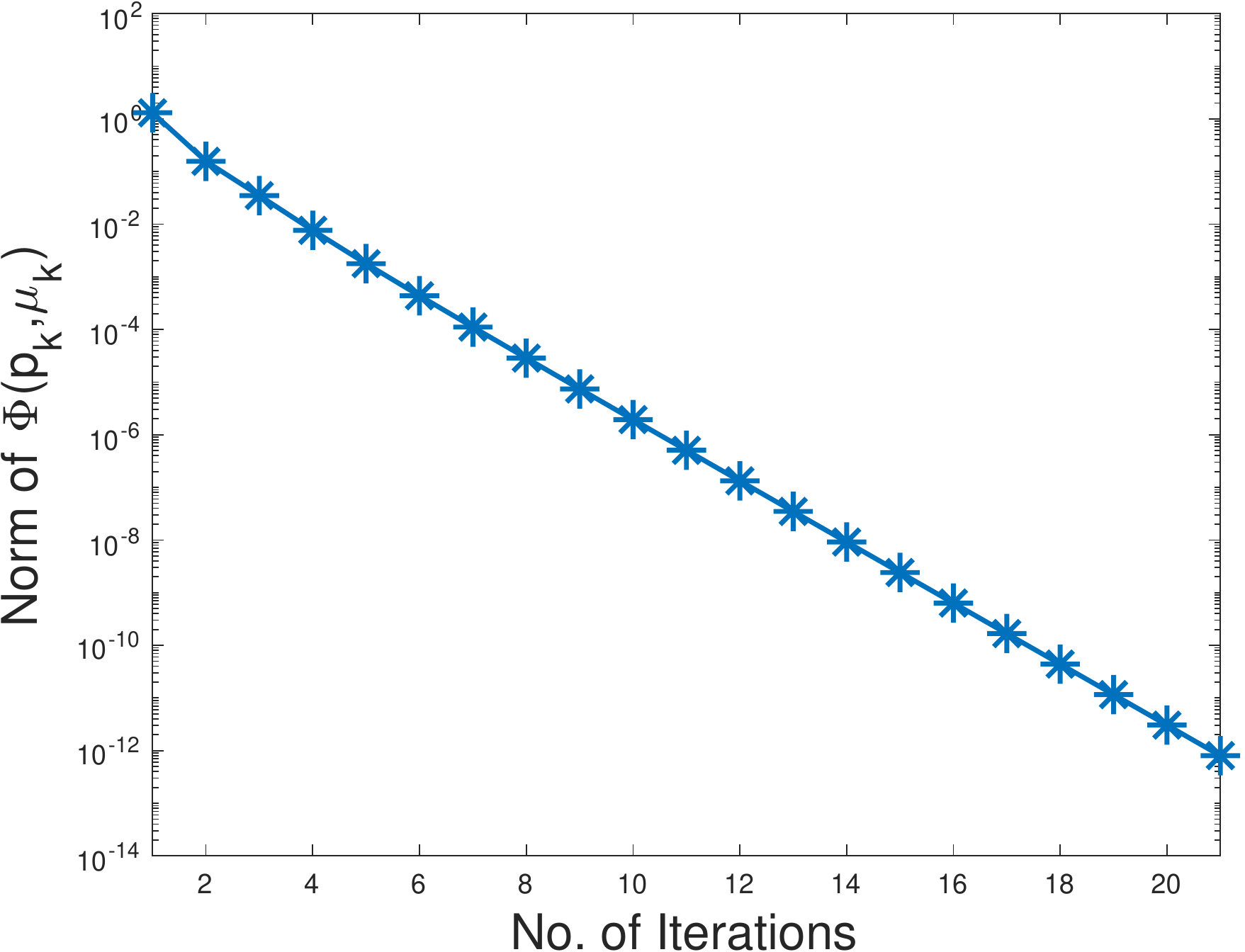}}
\subfigure[Case {\bf A2}]{
\includegraphics[width=2.5in,height=2.0in]{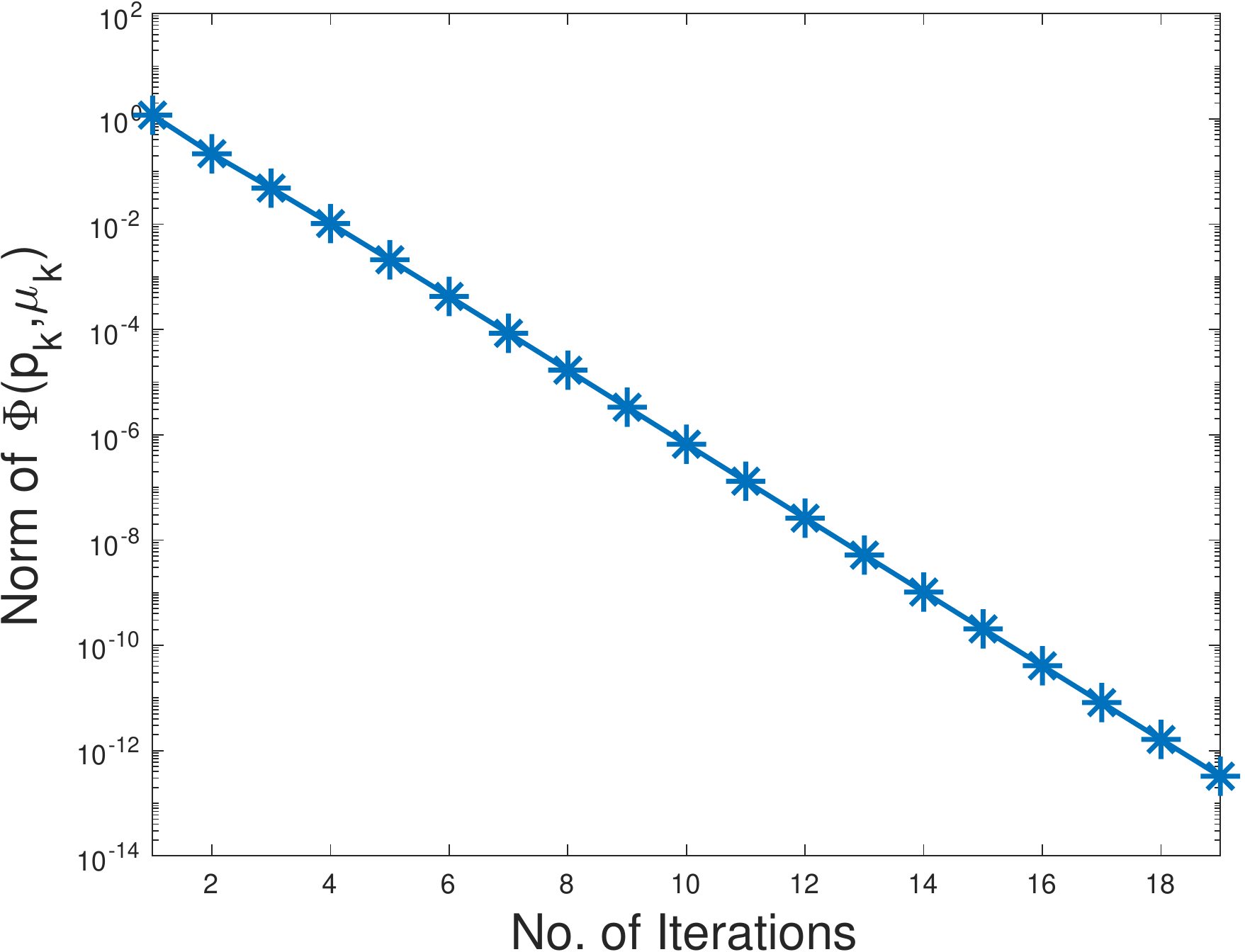}}\\
\subfigure[Case {\bf A3}]{
\includegraphics[width=2.5in,height=2.0in]{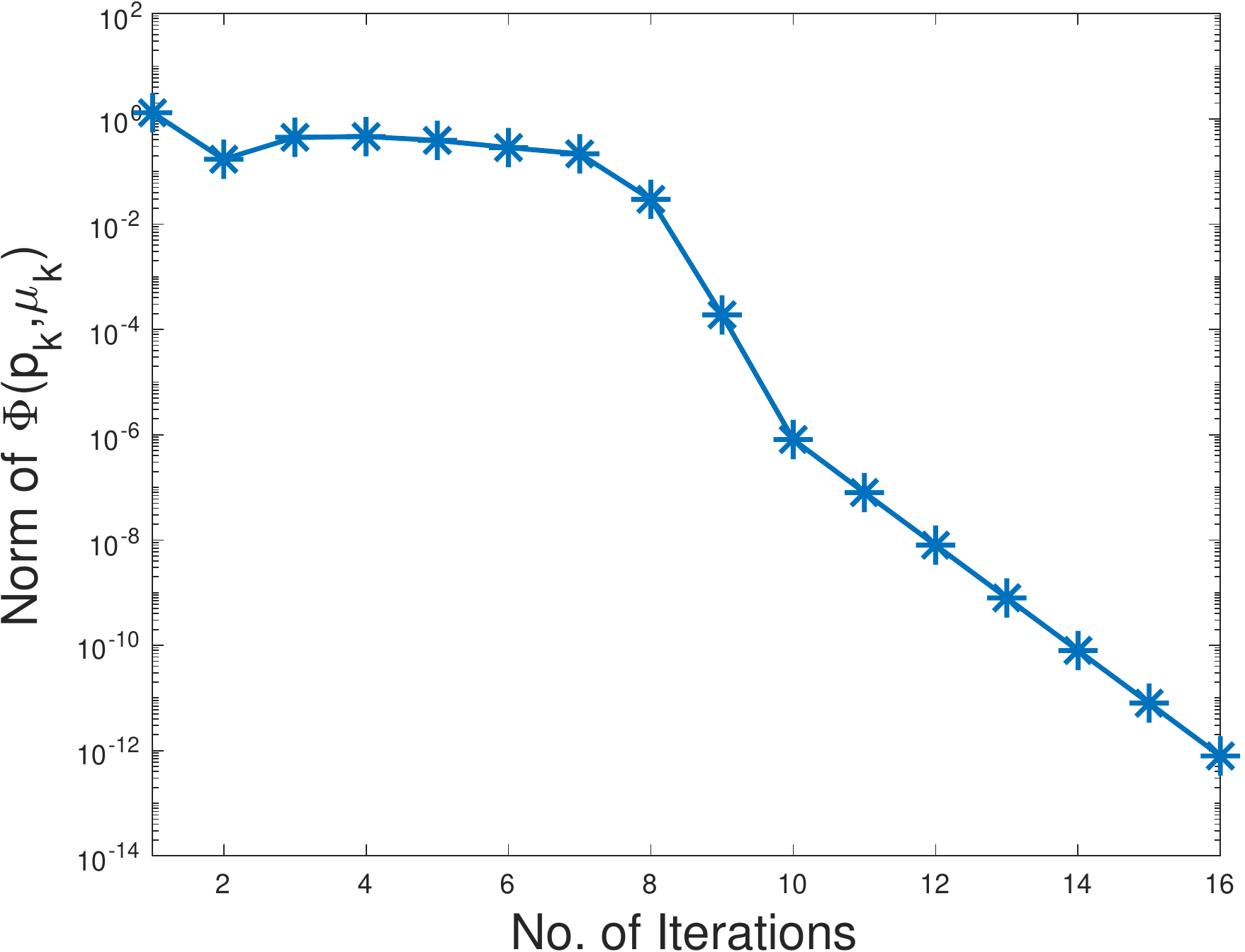}}
\subfigure[Case {\bf A4}]{
\includegraphics[width=2.5in,height=2.0in]{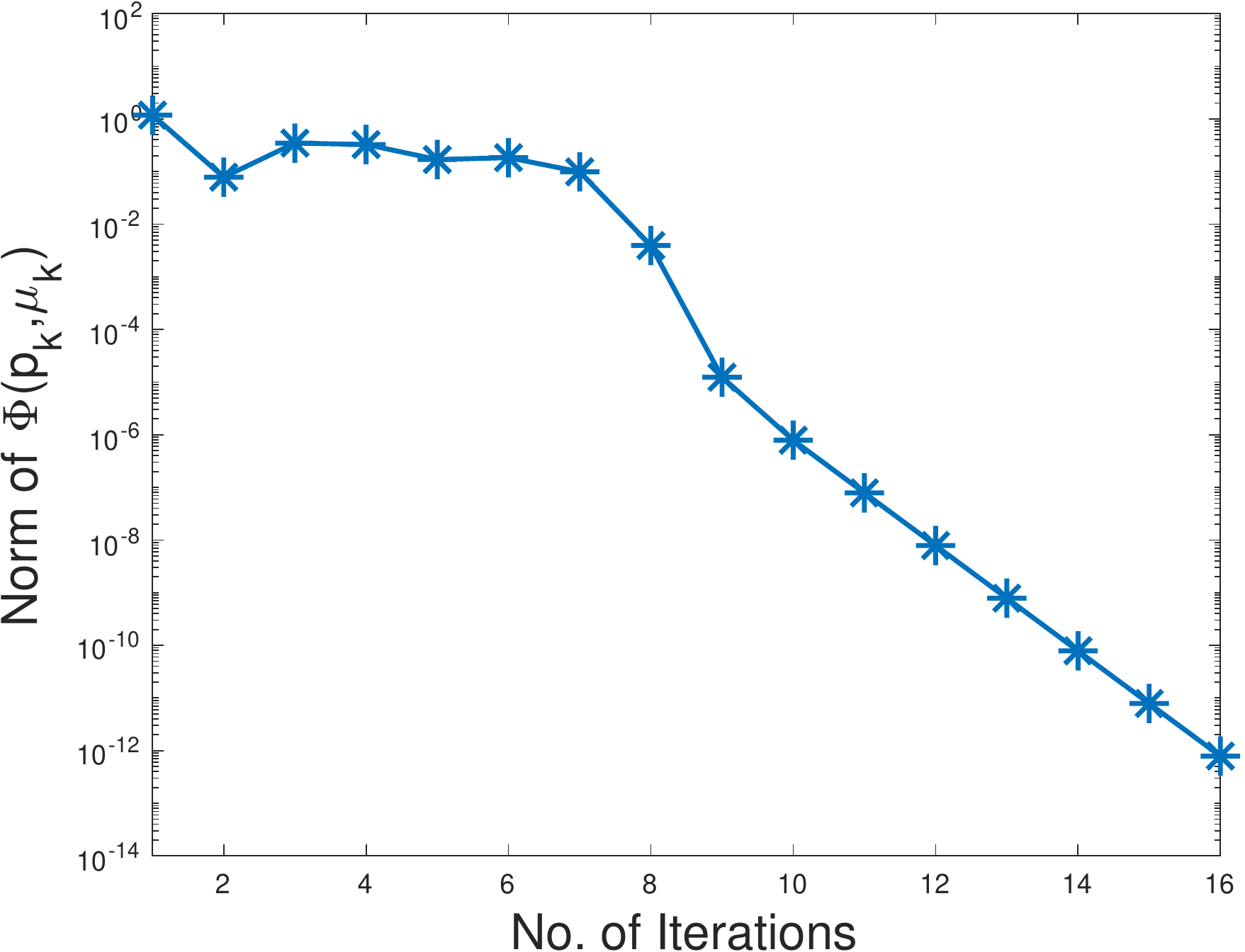}}
\caption{Norm of $\Phi(p_{k},\mu_{k})$ at each iteration for four cases {\bf A1}-{\bf A4}.}\label{figure1}
\end{figure}

\begin{table}[htbp!]
\label{table1}
\centering
\small{
\caption{The final results of four cases {\bf A1}-{\bf A4}.} 
\setlength{\tabcolsep}{1.2mm}{
\begin{tabular}{|c|c|c|c|c|c|c|}
\hline
   & $p^{\star}$  &  $\mu^{\star}$      & ${\bf g}(p^{\star})$      & $\mu {\bf g}(p^{\star})$   & $\|\grad\mathcal{L}_{\mu^{\star}}(p^{\star})\|_{p}$ & No. of Iteration  \\ \hline 
Case {\bf A1} & [0.4388,0.4862,0.5665,0.5001]    & 0      & -2.9037     & 0  &    $8.02\times 10^{-13}$ & 21    \\ \hline 
Case {\bf A2} & [0.4908,0.5100,0.4878,0.5109]    & 0      & -2.9298     & 0  &    $3.26\times 10^{-13}$ & 19    \\ \hline  
Case {\bf A3} & [0.0504,0.0566,0.0650,0.9950]    & 8.9114      & $1.05\times10^{-15}$     & $9.32\times10^{-15}$  &    $7.91\times 10^{-13}$ & 16    \\ \hline 
Case {\bf A4} & [0.0570,0.0593,0.0566,0.9950]    & 8.7870      & $1.05\times10^{-15}$     & $9.20\times10^{-15}$  &    $7.89\times 10^{-13}$ & 16    \\ \hline 
\end{tabular}}}
\end{table}

\section{Conclusion}\label{sec:Conclusions}

In this paper, we address the problem of finding solutions to generalized equations on Riemannian manifolds. If the manifold is a Euclidean space, then it is well-known that this problem encompasses several other contexts, such as standard nonlinear optimization, variational inequalities, and equilibrium problems. Here, we present a general inexact Newton method for solving generalized equations. Firstly, we discuss the metric regularity property and provide some examples of mappings satisfying this property. Secondly, we establish local convergence results, including both linear and quadratic convergence under suitable assumptions, along with a semi-local convergence result. Finally, we discuss the relationship between problems \eqref{eq:mainproblem} and \eqref{eq:VF}.

All results obtained here are derived under assumptions that are highly natural in comparison to their Euclidean counterparts, without requiring additional conditions related to the geometry of the manifold. From this perspective, considering a problem as a generalized equation may be a promising approach in the context of manifolds as well. 

As a next step, we plan to investigate how to extend the theory presented here to cases where the exponential map is replaced by a general retraction. Moreover, we intend to apply this new concept to quasi-Newton-type methods for solving problem \eqref{eq:mainproblem}.

\vspace{0.5cm}

{\bf Data Availability} Data sharing not applicable to this article as no datasets were generated or
analyzed during the current study

\vspace{0.5cm}
{\bf Declarations}

\vspace{0.5cm}

{\bf Conflict of interest} The authors declare that they have no conflict of interest

\appendix

\section{Appendix}

We begin this section by presenting two supporting lemmas. The proof of the first one can be found in \cite[Lemma A.1]{liu2020simple}. The second one is analogous to \cite[Lemma A.2]{liu2020simple}, but its proof requires some adaptations. Therefore, we provide the proof here.

\begin{lemma}\label{lem:par.trans.cont}	
	Let $( \bar{p} ,\bar{v})$ be a point on ${\cal T}{\cal M}$, and $\B_{r}(\bar{p})$ be a normal ball. Define the following vector field on $\B_{r}(\bar{p})$:
	$$
	V(p)= P_{\bar{p}p}\bar{v},\qquad p\in \B_{r}(\bar{p}).
	$$ 
	Then, $V$ is a smooth vector field on $\B_{r}(\bar{p})$.
\end{lemma}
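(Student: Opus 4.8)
The plan is to exhibit the parallel transport $P_{\bar p p}$, taken along the unique geodesic from $\bar p$ to $p$ inside the normal ball, as the time-one solution of a linear ODE whose coefficients and initial data depend smoothly on $p$, and then to conclude by the theorem on smooth dependence of ODE solutions on parameters. First I would use that, since $\B_{r}(\bar p)$ is a normal ball, $\exp_{\bar p}\colon B_r(0_{\bar p})\to\B_{r}(\bar p)$ is a diffeomorphism, so each $p\in\B_{r}(\bar p)$ is written uniquely as $p=\exp_{\bar p}(w)$ with $w=\exp_{\bar p}^{-1}(p)$ depending smoothly on $p$. In the associated normal coordinates the radial geodesic $\gamma_w(t)\coloneqq\exp_{\bar p}(tw)$, $t\in[0,1]$, which is exactly the geodesic defining $P_{\bar p p}$, has the particularly simple expression $x^i(t)=t\,w^i$, and the whole family $(t,w)\mapsto\gamma_w(t)$ stays in this single chart and is smooth.

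Next I would write the defining equation of $W(t)\coloneqq P_{\bar p\,\gamma_w(t)}\bar v$, namely $\nabla_{\dot\gamma_w(t)}W(t)=0$ with $W(0)=\bar v$, in these coordinates. With $W(t)=W^i(t)\partial_i$ it becomes the \emph{linear} system
\[
\dot W^i(t)+\Gamma^i_{jk}\bigl(\gamma_w(t)\bigr)\,w^j\,W^k(t)=0,\qquad W^i(0)=\bar v^i,
\]
whose coefficients are smooth functions of $(t,w)$ because the Christoffel symbols $\Gamma^i_{jk}$ are smooth and $\dot x^j(t)=w^j$. By the theorem on smooth dependence of solutions of linear ODEs on parameters and initial conditions, the map $(t,w)\mapsto\bigl(W^1(t),\dots,W^n(t)\bigr)$ is smooth; in particular $w\mapsto W(1)$ is smooth. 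Since $\{\partial_i\}$ is a smooth frame on $\B_{r}(\bar p)$ and $p\mapsto w=\exp_{\bar p}^{-1}(p)$ is smooth, the composition $V(p)=W(1)=W^i(1)\,\partial_i|_p$ defines a smooth section of ${\cal T}{\cal M}$, i.e. a smooth vector field.

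The main point to get right is the joint smoothness in the parameter $w$ (equivalently in the endpoint $p$), which is exactly where the smooth-dependence-on-parameters theorem enters; the potential obstacle that $W(t)$ lives in the moving tangent spaces ${\cal T}_{\gamma_w(t)}{\cal M}$ is dissolved by working in the single normal-coordinate chart, where every tangent space is identified with $\mathbb{R}^n$ through the coordinate frame and the ODE becomes an honest system on a fixed Euclidean space. If one prefers to avoid coordinates, the same conclusion follows by identifying $W$ with the horizontal lift of $\gamma_w$ for the Levi-Civita connection and invoking smoothness of the horizontal distribution on ${\cal T}{\cal M}$.
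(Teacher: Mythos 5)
Your proposal is correct. The paper itself gives no proof of this lemma; it defers entirely to \cite[Lemma A.1]{liu2020simple}, and your argument --- writing the radial geodesics of the normal ball in normal coordinates, expressing the parallel transport $P_{\bar{p}p}\bar{v}$ as the time-one solution of a linear ODE whose coefficients $\Gamma^i_{jk}(tw)w^j$ depend smoothly on the endpoint parameter $w=\exp_{\bar{p}}^{-1}(p)$, and invoking smooth dependence of solutions on parameters and initial data --- is the standard proof of this fact and matches the cited reference's approach, so it correctly supplies the details the paper omits.
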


\begin{lemma}\label{lem:bounddiff}
	Let $f\colon {\cal M} \to \mathbb{R}^m$ be a continuously differentiable function at $\bar{p}$  and $\B_{r}(\bar{p})$ a normal ball. Then, for each $\epsilon>0$,   there exists $\delta \in (0,r) $ such that 
	\begin{equation*}\label{eq:bounddiff}
		\| {\cal D}f(p)P_{ \bar{p}p}  -  {\cal D}f(\bar{p}) \|_{map} \leq \epsilon \quad \mbox{for all }   p \in \B_{\delta}(\bar{p}).
	\end{equation*}
\end{lemma}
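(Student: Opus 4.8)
The plan is to reduce the operator-norm estimate to a finite, frame-wise continuity statement by transporting a fixed orthonormal basis. First I would fix an orthonormal basis $\{e_1,\dots,e_n\}$ of $\mathcal{T}_{\bar p}\mathcal{M}$ and, for each $i$, define the vector field $E_i(p) \coloneqq P_{\bar p p} e_i$ on $\B_r(\bar p)$. Since $\B_r(\bar p)$ is a normal ball, the geodesic from $\bar p$ to each $p \in \B_r(\bar p)$ is unique, so $P_{\bar p p}$ is well defined; by Lemma~\ref{lem:par.trans.cont} each $E_i$ is then a smooth (in particular continuous) vector field on $\B_r(\bar p)$ with $E_i(\bar p) = e_i$.

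Next I would use the continuous differentiability of $f$ at $\bar p$: each gradient field $\grad f_j$ is continuous near $\bar p$, so, combined with the smoothness of the $E_i$ and of the Riemannian metric, the scalar maps $p \mapsto \langle \grad f_j(p), E_i(p)\rangle_p$ are continuous. By the coordinate expression \eqref{eq:diff.grad. multi}, this shows that each map $p \mapsto \mathcal{D}f(p)[E_i(p)] = \bigl(\mathcal{D}f(p)P_{\bar p p}\bigr)e_i \in \mathbb{R}^m$ is continuous on $\B_r(\bar p)$. Hence, given $\epsilon > 0$, for each $i$ there is $\delta_i \in (0,r)$ with $\|\mathcal{D}f(p)P_{\bar p p}e_i - \mathcal{D}f(\bar p)e_i\|_e \le \epsilon/\sqrt{n}$ for all $p \in \B_{\delta_i}(\bar p)$, where I have used $E_i(\bar p) = e_i$.

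Finally I would set $\delta \coloneqq \min_{1\le i\le n}\delta_i$ and expand an arbitrary unit vector $v \in \mathcal{T}_{\bar p}\mathcal{M}$ as $v = \sum_i a_i e_i$ with $\sum_i a_i^2 = 1$. By linearity,
\[
\bigl(\mathcal{D}f(p)P_{\bar p p} - \mathcal{D}f(\bar p)\bigr)[v] = \sum_{i=1}^{n} a_i\bigl(\mathcal{D}f(p)P_{\bar p p}e_i - \mathcal{D}f(\bar p)e_i\bigr),
\]
so the triangle inequality together with the Cauchy--Schwarz bound $\sum_{i} |a_i| \le \sqrt{n}$ yields $\|(\mathcal{D}f(p)P_{\bar p p} - \mathcal{D}f(\bar p))[v]\|_e \le \epsilon$ for all $p \in \B_\delta(\bar p)$. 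Taking the supremum over unit $v \in \mathcal{T}_{\bar p}\mathcal{M}$ gives the asserted bound on $\|\cdot\|_{map}$.

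The main obstacle is the continuity of $p \mapsto \mathcal{D}f(p)P_{\bar p p}e_i$: the two terms being compared are maps with \emph{different} domains ($\mathcal{D}f(p)$ acts on $\mathcal{T}_p\mathcal{M}$ while $\mathcal{D}f(\bar p)$ acts on $\mathcal{T}_{\bar p}\mathcal{M}$), and it is precisely the parallel transport $P_{\bar p p}$, together with its smoothness from Lemma~\ref{lem:par.trans.cont}, that makes the comparison both meaningful and continuous. Once that point is settled, the passage from a finite frame to the operator norm is a routine estimate.
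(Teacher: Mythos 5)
Your proof is correct, and it takes a route that is dual to the paper's rather than identical to it. The paper also reduces everything to Lemma~\ref{lem:par.trans.cont} plus continuity of the gradient fields, but it decomposes along the $m$ components of the \emph{codomain} instead of along a frame of the domain: starting from \eqref{eq:diff.grad. multi}, it uses the isometry property of parallel transport to move the transport onto the gradients, $\langle \grad f_i(p), P_{\bar p p}v\rangle_p = \langle P_{p\bar p}\grad f_i(p), v\rangle_{\bar p}$, applies Cauchy--Schwarz, and obtains the dimension-free bound $\|{\cal D}f(p)P_{\bar p p} - {\cal D}f(\bar p)\|_{map}^2 \le \sum_{i=1}^m \|\grad f_i(p) - P_{\bar p p}\grad f_i(\bar p)\|_p^2$; each summand tends to $0$ by Lemma~\ref{lem:par.trans.cont} applied with $\bar v = \grad f_i(\bar p)$ together with continuity of $\grad f_i$ and of the norm. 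You instead transport an orthonormal basis $e_1,\ldots,e_n$ of ${\cal T}_{\bar p}{\cal M}$, invoke Lemma~\ref{lem:par.trans.cont} with $\bar v = e_i$, and recover the operator norm through the $\epsilon/\sqrt{n}$ bookkeeping on a generic unit vector. What the paper's choice buys is that the operator-norm estimate drops out in a single Cauchy--Schwarz step, with no dimension constant and no expansion of an arbitrary unit vector; what your choice buys is that you never need the adjoint/isometry manipulation of the transport --- everything reduces to continuity of the finitely many scalar functions $p \mapsto \langle \grad f_j(p), E_i(p)\rangle_p$, which is arguably the more elementary ingredient. Both arguments are complete and rest on the same two pillars: smoothness of transported vector fields and continuity of the gradients near $\bar p$.
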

\begin{proof}
    It follows from \eqref{eq:diff.grad. multi} and parallel transport properties that
	\begin{align*}
		{\cal D}f(p) [P_{ \bar{p}p}v] &= ({\langle}  \grad f_1(p)  ,   P_{ \bar{p}p}v  {\rangle}_p, \ldots, {\langle}  \grad f_m(p)  ,   P_{ \bar{p}p}v  {\rangle}_p),  \\
		&= ({\langle}  P_{p \bar{p}}\grad f_1(p)  ,    v  {\rangle}_{\bar{p}} , \ldots, {\langle}  P_{ p\bar{p}} \grad f_m(p)  ,   v  {\rangle}_{\bar{p}}), \quad  v\in  {\cal T}_{\bar{p}}{\cal M}, \,\,   p\in \B_{r}(\bar{p}).
	\end{align*}
	From norm properties, we have
	\begin{align*}
		\| ({\cal D}f(p) P_{ \bar{p} p}  -  {\cal D}f(\bar{p})  )[v] \|_{e}^{2} 
		& =  \sum_{i=1}^{m}
		|{\langle} P_{ p\bar{p}} \grad f_i(p)  -  \grad f_i(\bar{p})  ,   v  {\rangle}_{\bar{p}}|^2,  \\
		& \leq  \sum_{i=1}^{m}
		\| P_{ p\bar{p}} \grad f_i(p) -  \grad f_i(\bar{p})\|_{\bar{p}}^{2}  \| v \|_{\bar{p}}^{2},  \quad  v\in  {\cal T}_{\bar{p}}{\cal M}, \,\,  p\in \B_{r}(\bar{p}).
	\end{align*}
	Since ${\cal D}f(p) P_{  \bar{p} p}  -  {\cal D}f(\bar{p}) \colon {\cal T}_{\bar{p}}{\cal M} \to \mathbb{R}^m$  is a linear map, the above inequality implies that 
	\begin{align*}
		\| {\cal D}f(p) P_{  \bar{p} p}  -  {\cal D}f(\bar{p})  \|_{map}^{2} 
		& \leq  \sum_{i=1}^{m}
		\| P_{ p\bar{p}} \grad f_i(p)    - \grad f_i(\bar{p})\|_{\bar{p}}^{2}  \\
		& =  \sum_{i=1}^{m}
		\| \grad f_i(p)  -  P_{ \bar{p} p} \grad f_i(\bar{p})\|_{p}^{2}, \quad   p\in \B_{r}(\bar{p}).
	\end{align*}
	Under the assumptions of the lemma, $\grad f_i$ is a continuous vector field around $\bar{p}$ for all $i=1,\ldots,m$. Hence, using Lemma~\ref{lem:par.trans.cont} with $\bar{v} =\grad f_i(\bar{p})$ and considering the fact that the function ${\cal M} \ni p \mapsto \| \cdot \|_p$ is continuous, we obtain
	$$
	\lim_{p\to \bar{p}}\| \grad f_i(p)  -  P_{ \bar{p} p} \grad f_i(\bar{p})) \|_{p} = \| \grad f_i(\bar{p})  -  P_{ \bar{p} \bar{p}} \grad f_i(\bar{p}) \|_{\bar{p}} =0.
	$$
	Thus, it follows from the last inequality above that $\lim_{p \to \bar{p}}\| {\cal D}f(p) P_{  \bar{p} p}  -  {\cal D}f(\bar{p})  \|_{map}=0$, which is equivalent to what we want.
\end{proof}
In the following proposition, we provide a Riemannian version of the Fundamental Theorem of Calculus and two inequalities that play a crucial role throughout this paper. We claim that the inequalities in the following result do not hold for a general retraction.

\begin{proposition}\label{lem:properepsil}
	Let $f\colon {\cal M} \to \mathbb{R}^m$ be a continuously differentiable function at $\bar{p}$. Then, there exists $\delta > 0$ such that for each $p',p \in {\cal B}_{\delta}(\bar p)$, we have
	\begin{equation}\label{eq:tfc}
	f(p) - f(p') = \int_{0}^{1} {\cal D}f(\gamma(t)) [\dot{\gamma}(t)] \dt,
	\end{equation}
where $\gamma \colon [0,1] \to {\cal M}$ is a geodesic satisfying $\gamma(0)=p'$ and $\gamma(1)=p$. In particular, for each $\epsilon>0$, there exists a normal ball  $ \B_{\delta_{\epsilon}}(\bar p) $ such that 
	\begin{equation}\label{eq:properepsil}
		\| f(p) - f(\bar{p}) - {\cal D}f(\bar{p})[\exp^{-1}_{\bar p} p]\|_{e} \leq \epsilon d(p,\bar{p}), \qquad  p \in \B_{\delta_{\epsilon}}(\bar p).
	\end{equation}
	Furthermore, if ${\cal D}f$ is $L$-Lipschitz continuous around $\bar{p}$ then there exists a normal ball $\B_{\delta_L}(\bar p)$ such that
	\begin{equation}\label{eq:Df-lipschitz}
		\| f(p) - f(\bar{p}) - {\cal D}f(\bar{p}) [\exp^{-1}_{\bar p} p] \|_e \leq L d^2(p,\bar{p}), 
		\qquad  p \in \B_{\delta_L}(\bar p).
	\end{equation}
\end{proposition}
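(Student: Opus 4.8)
The plan is to establish the three displays in turn, each built on the previous. For the Fundamental Theorem of Calculus \eqref{eq:tfc} I would first fix a totally normal ball $\B_{\delta}(\bar p)$, so that any two points $p',p\in\B_{\delta}(\bar p)$ are joined by a unique geodesic $\gamma\colon[0,1]\to{\cal M}$ lying in the ball with $\gamma(0)=p'$ and $\gamma(1)=p$. Setting $h\coloneqq f\circ\gamma\colon[0,1]\to\mathbb{R}^m$, the chain rule together with the definition of the differential gives $h'(t)={\cal D}f(\gamma(t))[\dot\gamma(t)]$, and since $f$ is continuously differentiable and $\gamma$ is smooth, $h$ is $C^1$ on $[0,1]$. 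Applying the classical Fundamental Theorem of Calculus to each of the $m$ coordinate functions of $h$ then yields $f(p)-f(p')=h(1)-h(0)=\int_0^1{\cal D}f(\gamma(t))[\dot\gamma(t)]\,\dt$, which is \eqref{eq:tfc}.

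For \eqref{eq:properepsil} I would apply \eqref{eq:tfc} with $p'=\bar p$, so that $\gamma(0)=\bar p$, $\gamma(1)=p$, and $\dot\gamma(0)=\exp^{-1}_{\bar p}p$. The key geometric observation is that $\dot\gamma$ is parallel along the geodesic $\gamma$, whence $\dot\gamma(t)=P_{\bar p\gamma(t)}\dot\gamma(0)$ for every $t$. Writing the constant vector ${\cal D}f(\bar p)[\exp^{-1}_{\bar p}p]$ as $\int_0^1{\cal D}f(\bar p)[\dot\gamma(0)]\,\dt$ and subtracting, the integrand becomes $\bigl({\cal D}f(\gamma(t))P_{\bar p\gamma(t)}-{\cal D}f(\bar p)\bigr)[\dot\gamma(0)]$, so that by the definition of the map norm and \eqref{eq:dist.iq.nor.expi},
\[
\bigl\|{\cal D}f(\gamma(t))[\dot\gamma(t)]-{\cal D}f(\bar p)[\dot\gamma(0)]\bigr\|_e \le \bigl\|{\cal D}f(\gamma(t))P_{\bar p\gamma(t)}-{\cal D}f(\bar p)\bigr\|_{map}\,d(p,\bar p).
\]
Given $\epsilon>0$, Lemma~\ref{lem:bounddiff} supplies a normal ball $\B_{\delta_\epsilon}(\bar p)$ on which the map-norm factor is at most $\epsilon$; since the geodesic stays inside this ball, integrating over $t\in[0,1]$ produces the bound $\epsilon\,d(p,\bar p)$.

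Finally, \eqref{eq:Df-lipschitz} follows by the same computation, but with Lemma~\ref{lem:bounddiff} replaced by the $L$-Lipschitz hypothesis of Definition~\ref{Def:DLips}. For fixed $t$, applying that definition to the pair $\bar p,\gamma(t)$, joined by the reparametrized geodesic $s\mapsto\gamma(ts)$ whose initial speed is $t\,\|\dot\gamma(0)\|_{\bar p}=t\,d(p,\bar p)$, gives $\|{\cal D}f(\gamma(t))P_{\bar p\gamma(t)}-{\cal D}f(\bar p)\|_{map}\le L\,d(\bar p,\gamma(t))=Lt\,d(p,\bar p)$. Integrating the resulting integrand bound and using $\int_0^1 t\,\dt=\tfrac12$ yields $\tfrac{L}{2}d^2(p,\bar p)\le L\,d^2(p,\bar p)$, as required. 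I expect the only delicate point to be the parallel-transport identity $\dot\gamma(t)=P_{\bar p\gamma(t)}\dot\gamma(0)$ and the verification that every intermediate point $\gamma(t)$ remains in the ball where the map-norm estimate is valid; once these are secured, all three conclusions reduce to routine integration.
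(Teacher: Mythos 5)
Your proposal is correct and follows essentially the same route as the paper's proof: the chain rule plus the classical fundamental theorem of calculus for \eqref{eq:tfc}, the parallel-transport identity $\dot\gamma(t)=P_{\bar p\gamma(t)}\dot\gamma(0)$ combined with Lemma~\ref{lem:bounddiff} for \eqref{eq:properepsil}, and Definition~\ref{Def:DLips} for the Lipschitz case. The only (harmless) difference is in the last step, where the paper bounds the map-norm factor by its supremum over $t\in[0,1]$ to get $L\,d^2(p,\bar p)$ directly, while you retain the factor $t$ and integrate, obtaining the slightly sharper constant $L/2$, which of course still implies the stated bound.
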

\begin{proof}
	Choose $\delta>0$ such that $f$ is continuously differentiable on ${\cal B}_{\delta}(\bar p)$. Pick $p',p \in {\cal B}_{\delta}(\bar p)$ and  consider a geodesic  $\gamma   \colon [0,1]\to {\cal M}$ connecting $p' = \gamma(0)$ to $p = \gamma(1)$. Using \cite[Proposition 2.7]{DoCa92}, we conclude that  
	$$
	 \frac{\de}{\dt}(f \circ \gamma )(t) = {\cal D}f(\gamma(t)) [\dot{\gamma}(t)], \qquad t\in [0,1].
	$$
	Applying integral 
	with respect to $t$ on both sides 
	and using the Fundamental Theorem of Calculus for the real function $f \circ \gamma$, we conclude that 
	\begin{equation}\label{eq:ilemtfcfir}
		  f(p) - f(p') = f(\gamma(1)) - f( \gamma(0))   =   \int_{0}^{1} {\cal D}f(\gamma(t)) [\dot{\gamma}(t)] \dt,
	\end{equation}
	which completes the proof of \eqref{eq:tfc}.
	Now, let us prove \eqref{eq:properepsil} and \eqref{eq:Df-lipschitz}. 	
	Let $\B_{r}(\bar{p})$ be a normal ball. For each $p \in \B_{r}(\bar{p})$, the geodesic $\gamma_{\bar{p} p} \colon [0,1]  \to {\cal M}$ connecting $\bar{p}$ to $p$ can be written as $\gamma_{\bar{p}p}(t) = \exp_{\bar p}(t \exp_{\bar p}^{-1}p)$,  which implies that  $\dot{\gamma}_{\bar{p}p}(t) = P_{\bar{p} \gamma_{\bar{p}p}(t) }\exp^{-1}_{\bar p} p$ holds for all  $t\in [0,1]$. Hence,
it follows from \eqref{eq:ilemtfcfir} with $\gamma = \gamma_{\bar{p}p}$ that
	\begin{equation*}
		f(p) - f( \bar{p}) =	\int_{0}^{1} {\cal D}f(\gamma_{\bar{p}p}(t)) [ P_{\bar{p} \gamma_{\bar{p}p}(t) }\exp^{-1}_{\bar p} p ]  \dt  .
	\end{equation*}
	By adding the term $-{\cal D}f(\bar{p})[\exp^{-1}_{\bar p} p]$ on both sides, it comes that 
	$$
	f(p) - f(\bar{p}) - {\cal D}f(\bar{p})[\exp^{-1}_{\bar p} p] = \int_{0}^{1}	\left(    {\cal D}f(\gamma_{\bar{p}p}(t)) P_{\bar{p} \gamma_{\bar{p}p}(t) }   -  {\cal D}f(\bar{p}) \right) [\exp^{-1}_{\bar p}p] \, \dt .
	$$
	Applying the Euclidean norm of $\mathbb{R}^m$, we get
	\begin{align*}
		\|f(p) - f(\bar{p}) - {\cal D}f(\bar{p}) [\exp^{-1}_{\bar p} p] \|_e
		& = \left\| \int_{0}^{1}	\left(
		{\cal D}f(\gamma_{\bar{p}p}(t)) P_{\bar{p} \gamma_{\bar{p}p}(t) }   -  {\cal D}f(\bar{p}) \right)
		[\exp^{-1}_{\bar p}p] \, \dt \right\|_e\\
			& \leq  \int_{0}^{1}  \left\| 	\left(
		{\cal D}f(\gamma_{\bar{p}p}(t)) P_{\bar{p} \gamma_{\bar{p}p}(t) }   -  {\cal D}f(\bar{p}) \right)
		[\exp^{-1}_{\bar p}p] \right\|_e \, \dt \\
		& \leq \int_{0}^{1} \|	{\cal D}f(\gamma_{\bar{p}p}(t)) P_{\bar{p} \gamma_{\bar{p}p}(t) }   -  {\cal D}f(\bar{p}) \|_{map} \, \|\exp^{-1}_{\bar p} p\|_{\bar p} \,\, \dt.
	\end{align*}
It follows from \eqref{eq:dist.iq.nor.expi}
      and simple properties of integral that 
    \begin{equation}\label{eq:plaii-sup}
    	\|f(p) - f(\bar{p}) - {\cal D}f(\bar{p}) [\exp^{-1}_{\bar p} p] \|_e 
    	 \leq \sup_{t\in [0,1]}\left\{ \|	{\cal D}f( \gamma_{\bar{p}p}(t) ) P_{\bar{p} \gamma_{\bar{p}p}(t) }   -  {\cal D}f(\bar{p}) \|_{map}  \right\} \, d(p,\bar{p}), 
    \end{equation}
   for all $p \in \B_{r}(\bar{p})$. For arbitrary  $\epsilon > 0$, it follows from Lemma~\ref{lem:bounddiff} and  $\delta_{\epsilon} \in (0,r) $ that $	\| {\cal D}f(p)P_{ \bar{p}p}  -  {\cal D}f(\bar{p}) \|_{map} \leq \epsilon$ for all $ p \in \B_{\delta_{\epsilon}}(\bar{p}),$ which implies that $	\| {\cal D}f(\gamma_{\bar{p}p}(t))P_{ \bar{p}\gamma_{\bar{p}p}(t)}  -  {\cal D}f(\bar{p}) \|_{map} \leq \epsilon$ for all $ p \in \B_{\delta_{\epsilon}}(\bar{p})$ and $t\in [0,1]$.  Therefore,   \eqref{eq:plaii-sup}  yields
    \eqref{eq:properepsil}. Finally, assume that ${\cal D}f$ is $L$-Lipschitz continuous around $\bar{p}$. 
    By Definition~\ref{Def:DLips}, there exists $\delta_L \in (0, r )$ such that 
    $\|	{\cal D}f(p) P_{\bar{p}p}   -  {\cal D}f(\bar{p}) \|_{map} \leq L d(p,\bar{p})$ for all $p \in \B_{\delta_L}(\bar p)$, which implies that
   $$
   \sup_{t\in [0,1]}\left\{ \|	{\cal D}f(\gamma_{\bar{p}p}(t)) P_{\bar{p}\gamma_{\bar{p}p}(t)}   -  {\cal D}f(\bar{p}) \|_{map} \right\} \leq L \sup_{t\in [0,1]}\left\{ d(\gamma_{\bar{p}p}(t),\bar{p}) \right\}= L d(p, \bar{p}),
   $$
    for all $p \in \B_{\delta_L}(\bar p)$.	Using the previous inequality and \eqref{eq:plaii-sup}, we conclude the proof of \eqref{eq:Df-lipschitz}.	
\end{proof}

The following proposition provides a geometric property of the exponential map. 
\begin{proposition}\label{lem:geo.mpoit.exp}
	Let $\B_{r}(\bar{p})$ be a  totally normal ball. Then, for each $p$ and $q$ in $ \B_{r}(\bar{p})$, we have
	\begin{equation}\label{eq:lem.ap.geo.d.exp}
		\dot{\gamma}_{pq}(t) = \exp_{\gamma_{pq}(t)}^{-1}q -  \exp_{\gamma_{pq}(t)}^{-1}p, \qquad  t\in [0,1],
	\end{equation}
	where $\gamma_{pq} \colon [0,1] \to {\cal M}$ is the geodesic connecting $p=\gamma_{pq}(0)$ to $q=\gamma_{pq}(1)$.
\end{proposition}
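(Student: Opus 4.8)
The plan is to fix $t\in[0,1]$, set $m\coloneqq\gamma_{pq}(t)$, and compute the two inverse exponentials $\exp_m^{-1}q$ and $\exp_m^{-1}p$ separately, showing that
$$\exp_m^{-1}q=(1-t)\dot\gamma_{pq}(t),\qquad \exp_m^{-1}p=-t\,\dot\gamma_{pq}(t),$$
so that subtracting them yields $\exp_m^{-1}q-\exp_m^{-1}p=\dot\gamma_{pq}(t)$, which is exactly \eqref{eq:lem.ap.geo.d.exp}. The whole argument rests on the totally normal hypothesis: for the point $m\in\B_r(\bar p)$ there is $\delta>0$ with ${\cal B}_\delta(m)$ a normal ball and $\B_r(\bar p)\subset{\cal B}_\delta(m)$, so that $p,q\in{\cal B}_\delta(m)$, the map $\exp_m$ is a diffeomorphism from $B_\delta(0_m)$ onto ${\cal B}_\delta(m)$, and the radial geodesics issuing from $m$ are the unique minimal geodesics inside ${\cal B}_\delta(m)$.

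First I would record that, since $\B_r(\bar p)$ is totally normal, $\gamma_{pq}$ is the unique minimal geodesic joining $p$ to $q$; being parametrized on $[0,1]$ it has constant speed $\|\dot\gamma_{pq}(t)\|_{\gamma_{pq}(t)}=d(p,q)$. Next, I would consider the affine reparametrization $\sigma(s)\coloneqq\gamma_{pq}(t+s(1-t))$, $s\in[0,1]$, which is again a geodesic with $\sigma(0)=m$, $\sigma(1)=q$ and, by the chain rule, $\dot\sigma(0)=(1-t)\dot\gamma_{pq}(t)$. Because a sub-segment of a minimal geodesic is minimal, $\sigma$ is a minimal geodesic from $m$ to $q$ of length $(1-t)d(p,q)=d(m,q)<\delta$; by the uniqueness of minimal geodesics inside the normal ball ${\cal B}_\delta(m)$ it must coincide with the radial geodesic $s\mapsto\exp_m\bigl(s\,\exp_m^{-1}q\bigr)$, and comparing initial velocities gives $\exp_m^{-1}q=(1-t)\dot\gamma_{pq}(t)$. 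The same reasoning applied to $\tau(s)\coloneqq\gamma_{pq}\bigl(t(1-s)\bigr)$, a minimal geodesic from $m$ to $p$ with $\dot\tau(0)=-t\,\dot\gamma_{pq}(t)$, yields $\exp_m^{-1}p=-t\,\dot\gamma_{pq}(t)$. Subtracting the two identities establishes \eqref{eq:lem.ap.geo.d.exp}; the endpoint cases $t=0$ and $t=1$ follow at once since $\exp_p^{-1}p=0_p$ and $\dot\gamma_{pq}(0)=\exp_p^{-1}q$.

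The step I expect to be the main obstacle is the identification of the sub-segment $\sigma$ (resp. $\tau$) with the radial geodesic determined by $\exp_m^{-1}q$ (resp. $\exp_m^{-1}p$). A priori the exponential map at $m$ is not globally injective, so the equality $\exp_m(\dot\sigma(0))=q$ alone does not force $\dot\sigma(0)=\exp_m^{-1}q$; one must guarantee that $\dot\sigma(0)$ lies in the ball $B_\delta(0_m)$ on which $\exp_m$ is a diffeomorphism. This is precisely what the totally normal hypothesis secures, via the length bound $\|\dot\sigma(0)\|_m=d(m,q)<\delta$ coming from minimality of sub-segments together with $\B_r(\bar p)\subset{\cal B}_\delta(m)$. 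Once this containment is in place, uniqueness of minimal geodesics in the normal ball closes the argument, and the remaining manipulations are the elementary chain-rule computations of $\dot\sigma(0)$ and $\dot\tau(0)$.
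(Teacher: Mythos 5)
Your proof is correct, and it arrives at exactly the paper's two intermediate identities, namely $\exp_{m}^{-1}q=(1-t)\dot\gamma_{pq}(t)$ and $\exp_{m}^{-1}p=-t\,\dot\gamma_{pq}(t)$ with $m=\gamma_{pq}(t)$ (in the paper these read $\frac{d(m,q)}{d(p,q)}\dot\gamma_{pq}(t)=\exp_{m}^{-1}q$ and $\frac{d(p,m)}{d(p,q)}\dot\gamma_{pq}(t)=-\exp_{m}^{-1}p$), but by a genuinely different mechanism. The paper works from the endpoints: it writes the radial identity $\frac{1}{d(p,q)}\dot\gamma_{pq}(0)=\frac{1}{d(p,m)}\exp_p^{-1}m$ at $p$, then transports it to $m$ along the geodesic, using that $\dot\gamma_{pq}$ is a parallel field (so $P_{p\gamma_{pq}(t)}[\dot\gamma_{pq}(0)]=\dot\gamma_{pq}(t)$) together with the reversal identity $P_{p\gamma_{pq}(t)}[\exp_p^{-1}m]=-\exp_m^{-1}p$; it repeats this from $q$ and then adds the two identities, invoking $d(p,m)+d(m,q)=d(p,q)$. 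You instead work entirely in ${\cal T}_m{\cal M}$: the affine reparametrizations $\sigma$ and $\tau$ of the two sub-segments are geodesics issuing from $m$, sub-segments of minimizing geodesics are minimizing, and uniqueness of minimizing geodesics inside the normal ball ${\cal B}_\delta(m)$ forces them to coincide with the radial geodesics, so comparing initial velocities yields both identities with no parallel transport at all. Your route has the merit of making explicit where total normality enters (it guarantees $\exp_m$ is invertible on a ball containing $p$ and $q$, so that $\dot\sigma(0)\in B_\delta(0_m)$ really equals $\exp_m^{-1}q$); the paper's route trades that bookkeeping for the transport-reversal identity, which packages the same uniqueness argument. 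One shared gloss worth noting: both arguments apply the totally normal property at the intermediate point $m$ — you assert $m\in\B_r(\bar p)$ without proof, and a totally normal ball as defined in the paper need not be geodesically convex, while the paper makes the same implicit assumption the moment it writes $\exp_{\gamma_{pq}(t)}^{-1}p$. Since the statement of the proposition already presupposes that these inverse exponentials are well defined along the geodesic, this is a presupposition of the result itself rather than a defect peculiar to your proof.
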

\begin{proof}
	Let  $p$ and $q$ be arbitrary points in $\B_{r}(\bar{p})$. Note that \eqref{eq:lem.ap.geo.d.exp} is trivially satisfied when $t=0$ and $t=1$. Thus, we only need to analyze the case where $t\in (0,1)$. Considering that the geodesic starting from
	 $p$ with direction $\dot{\gamma}_{pq}(0)$ is unique, and that 
	  $ \B_{r}(\bar{p})$ is a  totally normal ball, we have
	$$
	\frac{1}{d(p,q)}    \dot{\gamma}_{pq}(0) = \frac{1}{d(p, \gamma_{pq}(t))}  \exp_p^{-1}(\gamma_{pq}(t)), \quad t\in (0,1).
	$$
	Applying parallel transport on both sides of the equation, we obtain
	$$
	\frac{1}{d(p,q)}    P_{p\gamma_{pq}(t)} [\dot{\gamma}_{pq}(0)] = \frac{1}{d(p, \gamma_{pq}(t))}  P_{p\gamma_{pq}(t)} [ \exp_p^{-1}(\gamma_{pq}(t))]= \frac{1}{d(p, \gamma_{pq}(t))}   [- \exp_{\gamma_{pq}(t)}^{-1}p], \quad t\in (0,1).
	$$
	Since $\gamma_{pq}$ is a geodesic, the field $\dot{\gamma}_{pq}$ is  parallel  along $\gamma_{pq}$, and, consequently,  the equality $	\dot{\gamma}_{pq}(t) = P_{p\gamma_{pq}(t)}  [\dot{\gamma}_{pq}(0)]$ holds for all $t\in (0,1)$. Thus, the last expression implies
	\begin{equation*}\label{eq:vmdinlemaappor}
		\frac{d(p, \gamma_{pq}(t))}{d(p,q)}      \dot{\gamma}_{pq}(t)  =   -  \exp_{\gamma_{pq}(t)}^{-1}(p), \quad t\in (0,1).
	\end{equation*}
	In a similar manner, we can show that
	$$
	\frac{d( \gamma_{pq}(t) , q )}{d(p,q)} \dot{\gamma}_{pq}(t) =      \exp_{\gamma_{pq}(t)}^{-1}(q), \quad t\in (0,1).
	$$
	To conclude the proof, simply add the last two equalities and use the fact that $d(p, \gamma_{pq}(t)) + d(\gamma_{pq}(t), q) = d(p, q)$ for all $t \in (0, 1)$.
\end{proof}
In the next result, we will establish a sufficient condition for the function $f: \mathcal{M} \to \mathbb{R}$ to have closed graph.
\begin{proposition}\label{prop:gph.closed}
	Let $f\colon {\cal M} \to \mathbb{R}$ be a continuous function. Then, the graph of $f$ is closed in ${\cal M} \times \mathbb{R}$.
\end{proposition}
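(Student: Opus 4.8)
The plan is to exploit the fact that $\gph f$ is the graph of a continuous map into the Hausdorff (indeed metric) space $\mathbb{R}$, for which closedness of the graph is essentially automatic. Since ${\cal M}$ is a Riemannian manifold, it carries the metric $d$, so ${\cal M}\times\mathbb{R}$ is metrizable (e.g.\ by the product metric $d((p,x),(q,y))=d(p,q)+|x-y|$), and it therefore suffices to check \emph{sequential} closedness of $\gph f$.

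First I would take an arbitrary sequence $\{(p_k,x_k)\}\subset \gph f$ converging in ${\cal M}\times\mathbb{R}$ to some $(p,x)$. Unwinding the product topology, this is equivalent to $d(p_k,p)\to 0$ together with $x_k\to x$ in $\mathbb{R}$. The defining property of the graph gives $x_k=f(p_k)$ for every $k$, so the task reduces to showing $x=f(p)$.

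Next I would invoke continuity of $f$ at $p$: since $p_k\to p$, we have $f(p_k)\to f(p)$. Combining this with $x_k=f(p_k)\to x$, the sequence $\{f(p_k)\}$ converges simultaneously to $f(p)$ and to $x$. By uniqueness of limits in $\mathbb{R}$ we conclude $x=f(p)$, hence $(p,x)\in\gph f$. Since every convergent sequence in $\gph f$ has its limit in $\gph f$, the graph is closed in ${\cal M}\times\mathbb{R}$.

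There is no serious obstacle here; the only point deserving a remark is the metrizability of ${\cal M}\times\mathbb{R}$, which justifies the reduction to sequences. Should one wish to avoid this remark, an equivalent route is to prove the complement is open directly: given $(p,x)\notin\gph f$, set $\varepsilon=\tfrac{1}{2}|x-f(p)|>0$, use continuity of $f$ to find a neighborhood $U$ of $p$ with $|f(q)-f(p)|<\varepsilon$ for $q\in U$, and then verify that every $(q,y)$ with $q\in U$ and $|y-x|<\varepsilon$ satisfies $|y-f(q)|\ge |x-f(p)|-|y-x|-|f(q)-f(p)|>0$, so $(q,y)\notin\gph f$. Either argument closes the proof.
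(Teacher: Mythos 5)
Your proof is correct, but it takes a genuinely different route from the paper's. The paper argues in one line through an auxiliary function: it sets $\varphi(p,x) = |f(p)-x|$, observes that $\varphi$ is continuous on ${\cal M}\times\mathbb{R}$ and that $\gph f = \varphi^{-1}(0)$, and concludes that $\gph f$ is closed as the preimage of the closed set $\{0\}$ under a continuous map. Your main argument instead verifies sequential closedness, which requires the remark (which you correctly flag and justify) that ${\cal M}\times\mathbb{R}$ is metrizable via the Riemannian distance; in a general topological space sequential closedness would not suffice. What your route buys is elementariness: it uses nothing beyond convergence of sequences, continuity, and uniqueness of limits in $\mathbb{R}$. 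What the paper's route buys is brevity and independence from metrizability, since the preimage argument works verbatim for a continuous map defined on an arbitrary topological space. Your fallback argument showing the complement is open is essentially a hands-on unfolding of the paper's idea (it amounts to checking that $\varphi>0$ on a product neighborhood of a point off the graph) and, like the paper's proof, needs no appeal to metrizability; either of your two arguments is a complete and valid proof.
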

\begin{proof}
	Since the function $f$ is continuous, the function $\varphi  \colon {\cal M} \times \mathbb{R} \to \mathbb{R}$ defined by $\varphi(p,x) = |f(p) - x|$ is also continuous. On the other hand, we have that 
	$$
	\gph f \coloneqq \{(p,x)\in {\cal M} \times \mathbb{R} \colon x = f(p)  \} = \varphi^{-1}(0).
	$$
	Therefore, $\gph f$ is closed because it is the inverse image, by the continuous function $\varphi$, of the closed set $\{0\}$.
\end{proof}

Throughout this paper, our strategy to prove some results requires the following lemma.
\begin{lemma}\label{lem:assumptionproof}
	Let $f \colon {\cal M} \to \mathbb{R}^m$ be a continuously differentiable function at $\bar{p}$. Then, for each $\epsilon>0$, there exists a totally normal ball	$\B_{\delta_{\epsilon}}(\bar{p})$ such that 
	\begin{equation*}\label{eq:assump:lipsh.cond}
		\| {\cal D}f(p) [\exp_{p}^{-1}p' - \exp_{p}^{-1}p''] -  {\cal D}f(\bar{p})[ \exp_{\bar{p}}^{-1}p' - \exp_{\bar{p}}^{-1}p'']\|_e
		\leq \epsilon d(p',p''), \qquad p, p',p''\in \B_{\delta_{\epsilon}}(\bar{p}).
	\end{equation*}
\end{lemma}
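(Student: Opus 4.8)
The plan is to reduce the estimate to a triangle-inequality decomposition that cleanly separates the variation of the differential ${\cal D}f$ from the variation of the underlying exponential/parallel-transport geometry. Set $u \coloneqq \exp_p^{-1}p' - \exp_p^{-1}p'' \in {\cal T}_p{\cal M}$ and $w \coloneqq \exp_{\bar p}^{-1}p' - \exp_{\bar p}^{-1}p'' \in {\cal T}_{\bar p}{\cal M}$. Using that parallel transport is an isometry with $P_{p\bar p}P_{\bar p p}=\id$ on ${\cal T}_{\bar p}{\cal M}$, I would insert $P_{p\bar p}u$ and split
$$
{\cal D}f(p)[u] - {\cal D}f(\bar p)[w] = \big({\cal D}f(p) - {\cal D}f(\bar p)P_{p\bar p}\big)[u] + {\cal D}f(\bar p)\big[P_{p\bar p}u - w\big].
$$
I will then bound the two summands separately, each by a term of the form $(\text{small constant})\cdot d(p',p'')$, and choose the radius $\delta_\epsilon$ (small enough that $\B_{\delta_\epsilon}(\bar p)$ is totally normal) so that their sum is at most $\epsilon\,d(p',p'')$.

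For the first summand, note that $\big({\cal D}f(p) - {\cal D}f(\bar p)P_{p\bar p}\big)P_{\bar p p} = {\cal D}f(p)P_{\bar p p} - {\cal D}f(\bar p)$, and since $P_{\bar p p}$ is an isometric isomorphism the two operator norms coincide: $\|{\cal D}f(p) - {\cal D}f(\bar p)P_{p\bar p}\|_{map} = \|{\cal D}f(p)P_{\bar p p} - {\cal D}f(\bar p)\|_{map}$. Hence Lemma~\ref{lem:bounddiff} makes this map-norm at most any prescribed $\epsilon'$ by shrinking the ball. It remains to bound $\|u\|_p$ by a uniform multiple of $d(p',p'')$. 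Working in the totally normal ball and letting $c\colon[0,1]\to{\cal M}$ be the constant-speed geodesic from $p''$ to $p'$, I write $u = \int_0^1 D_q(\exp_p^{-1})(c(t))[\dot c(t)]\,dt$; since $(p,q)\mapsto D_q(\exp_p^{-1})$ is continuous on a compact closed sub-ball, its map-norm is bounded there by a constant $C$, and $\|\dot c(t)\|_{c(t)}=d(p',p'')$ gives $\|u\|_p\le C\,d(p',p'')$. Thus this summand is at most $\epsilon' C\,d(p',p'')\le\frac{\epsilon}{2}d(p',p'')$ once $\epsilon'$ is chosen small.

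For the second summand I exploit that the geometric discrepancy vanishes exactly at $p=\bar p$. Introduce $\Psi(p,q)\coloneqq P_{p\bar p}\exp_p^{-1}q - \exp_{\bar p}^{-1}q \in {\cal T}_{\bar p}{\cal M}$, which is smooth in $(p,q)$ by smoothness of the exponential map and of parallel transport (Lemma~\ref{lem:par.trans.cont}), and observe $P_{p\bar p}u - w = \Psi(p,p') - \Psi(p,p'')$. Crucially $\Psi(\bar p,\cdot)\equiv 0$, so $D_q\Psi(\bar p,q)=0$ for every $q$, and by continuity of $D_q\Psi$ its map-norm tends to $0$ as $p\to\bar p$, uniformly for $q$ in a compact sub-ball. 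Integrating $D_q\Psi$ along $c$ as before yields $\|P_{p\bar p}u - w\|_{\bar p}\le\big(\sup_t\|D_q\Psi(p,c(t))\|_{map}\big)\,d(p',p'')$, so this summand is bounded by $\|{\cal D}f(\bar p)\|_{map}\cdot\big(\sup_t\|D_q\Psi(p,c(t))\|_{map}\big)\,d(p',p'')$, which is $\le\frac{\epsilon}{2}d(p',p'')$ after shrinking the ball. Combining the two estimates completes the proof.

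The main obstacle is the uniformity in the two geometric estimates: making the constant $C$ and the supremum of $\|D_q\Psi\|_{map}$ uniform over $p,p',p'',q$ ranging in the ball. This is precisely where the totally-normal-ball structure and compactness of a closed sub-ball are essential, since they guarantee that the connecting geodesic $c$ stays inside the region where $\exp^{-1}$ and parallel transport are jointly smooth, so that the relevant differentials are jointly continuous, hence bounded and (for $\Psi$) uniformly small near $\bar p$.
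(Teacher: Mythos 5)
Your proof is correct, and it takes a genuinely different route from the paper's. The paper introduces a single auxiliary map $\Psi(p,q) \coloneqq {\cal D}f(p)[\exp_{p}^{-1}q] - {\cal D}f(\bar p)[\exp_{\bar p}^{-1}q]$ with values in $\mathbb{R}^m$, observes that $\Psi(\bar p,\cdot)\equiv 0$ and that ${\cal D}_2\Psi$ is jointly continuous with ${\cal D}_2\Psi(\bar p,\bar p)=0$, and finishes in one stroke: by the fundamental theorem of calculus (first part of Proposition~\ref{lem:properepsil}) the entire left-hand side equals $\int_0^1 {\cal D}_2\Psi(p,\gamma(t))[\dot\gamma(t)]\,\dt$ along the geodesic from $p''$ to $p'$, hence is bounded by $\sup_q\|{\cal D}_2\Psi(p,q)\|_{map}\,d(p',p'')$. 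You instead split the error into an analytic term $\bigl({\cal D}f(p)-{\cal D}f(\bar p)P_{p\bar p}\bigr)[u]$, controlled by Lemma~\ref{lem:bounddiff} (which the paper proves but invokes only inside Proposition~\ref{lem:properepsil}, not in this lemma), and a purely geometric term ${\cal D}f(\bar p)[P_{p\bar p}u-w]$, controlled by your transport-discrepancy map $\Psi(p,q)=P_{p\bar p}\exp_p^{-1}q-\exp_{\bar p}^{-1}q$, which vanishes identically at $p=\bar p$. Both arguments rest on the same mechanism---an auxiliary function vanishing at $p=\bar p$ whose $q$-differential is jointly continuous, integrated along a geodesic kept inside the totally normal ball---so neither is more elementary, but the trade-off is real: the paper's version is shorter, needing a single continuity argument, whereas yours is more modular, cleanly separating the $C^1$-variation of $f$ from a distortion estimate for $\exp^{-1}$ and parallel transport that is independent of $f$ (and reusable as such). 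The price of your route is two uniformity arguments instead of one (the bound $C$ on $\|D_q(\exp_p^{-1})\|_{map}$ and the uniform smallness of $\|D_q\Psi\|_{map}$), plus joint smoothness of $(p,v)\mapsto P_{p\bar p}v$; the latter is slightly more than Lemma~\ref{lem:par.trans.cont} literally states, but it follows from it by transporting an orthonormal basis of ${\cal T}_{\bar p}{\cal M}$ and using that parallel transport is an isometry, so this is not a gap.
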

\begin{proof}
	Let $\B_{r}(\bar{p})$ be a totally normal ball. Define the function $\Psi \colon \B_{r}(\bar{p}) \times \B_{r}(\bar{p}) \to \mathbb{R}^m$  by
	\begin{equation}\label{eq:defpsi}
	\Psi(p,q) =   {\cal D}f(p) [\exp_{p}^{-1}q]  -  {\cal D}f(\bar p) [\exp_{\bar p}^{-1}q].
	\end{equation}
	Let $\epsilon > 0$, and consider $\cal M \times \cal M$ with the product metric.
	Since ${\cal D}f$ and $\exp^{-1}$ are continuous at $\bar{p}$ and continuously differentiable at $(\bar{p}, \bar{p})$, respectively, ${\cal D}_2\Psi(p,q)$ (the differential of $\Psi(p,\cdot)$ at $q$) is continuous at $(\bar{p}, \bar{p})$. Therefore, there exists $\delta \in (0, r)$ such that
    $$
    d((p,q),(\bar{p},\bar{p}))=(d^2(p,\bar p) + d^2(q,\bar p))^{\frac{1}{2}}< \delta \quad \mbox{implies} \quad \| {\cal D}_2\Psi(p,q) - {\cal D}_2 \Psi(\bar p,\bar p) \|_e< \epsilon.
    $$
    From \eqref{eq:defpsi}, it follows that $\Psi(\bar p, q)=0$ for all $q\in \B_{r}(\bar{p})$ which implies  ${\cal D}_2 \Psi(\bar p,\bar p)=0$. Thus, the last line can be rewritten as
    \begin{equation}\label{eq:difmenepi}
    d^2(p,\bar p) + d^2(q,\bar p) < \delta^2 \quad \mbox{implies} \quad \| {\cal D}_2\Psi(p,q) \|_e< \epsilon.
    \end{equation}
    Set $ \delta_{\epsilon} \coloneqq \delta/\sqrt{2}$. It follows from $d^2(p,\bar p) + d^2(q,\bar p) < \delta^2$  for all $(p,q)\in  \B_{\delta_{\epsilon}}(\bar{p}) \times \B_{\delta_{\epsilon}}(\bar{p})$ and  \eqref{eq:difmenepi} that
$\| {\cal D}_2\Psi(p,q) \|_e< \epsilon$ for all $(p,q)\in  \B_{\delta_{\epsilon}}(\bar{p}) \times \B_{\delta_{\epsilon}}(\bar{p})$. Consequently,
     \begin{equation}\label{eq:boundD2}
     \sup_{q\in {\cal B}_{\delta_{\epsilon}}(\bar{p})}\{	\| {\cal D}_2\Psi(p,q) \|_e \}\leq \epsilon \quad \mbox{for all } p\in \B_{\delta_{\epsilon}}(\bar{p}).
     	\end{equation}
For  $p,p',p''\in \B_{\delta_{\epsilon}}(\bar{p})$, it follows from the first part of Proposition~\ref{lem:properepsil}  and \eqref{eq:defpsi} that
     \begin{align*}
     {\cal D}f(p) [\exp_{p}^{-1}p' - \exp_{p}^{-1}p''] -  {\cal D}f(\bar{p})[ \exp_{\bar{p}}^{-1}p' - \exp_{\bar{p}}^{-1}p'']
     &= \Psi(p,p') - \Psi(p,p''), \\
     &= \int_{0}^{1}  {\cal D}_2\Psi(p, \gamma(t) ) [\dot{\gamma}(t)] \dt,
     \end{align*}
where $\gamma \colon [0,1] \to {\cal M}$ is a geodesic that satisfies $\gamma(0)=p''$ and $\gamma(1)=p'$. Since $ \B_{\delta_{\epsilon}}(\bar{p}) $ is totally normal, $\gamma(t) \in \B_{\delta_{\epsilon}}(\bar{p})$ for all $t\in[0,1]$. Hence, applying the norm to both sides of the above equality and using norm properties, we obtain
	\begin{align*}
	\| {\cal D}f(p) [\exp_{p}^{-1}p' - \exp_{p}^{-1}p''] -  {\cal D}f(\bar{p})[ \exp_{\bar{p}}^{-1}p' - \exp_{\bar{p}}^{-1}p''] \|_e
	&\leq \int_{0}^{1} \| {\cal D}_2\Psi(p, \gamma(t) ) [\dot{\gamma}(t)] \|_e \dt, \\
	 &\leq \int_{0}^{1}  \| {\cal D}_2\Psi(p, \gamma(t) ) \|_{map} \| \dot{\gamma}(t) \|_{\gamma(t)} \dt, \\
	 & \leq \sup_{q \in {\cal B}_{\delta_{\epsilon}}(\bar{p}) } \left\{  \| {\cal D}_2\Psi(p, q ) \|_{map} \right\} d(p',p'').
	\end{align*}
	The conclusion of this proof follows from the previous inequality and \eqref{eq:boundD2}.
\end{proof}


\def\cprime{$'$}

\end{document}